\DeclareMathOperator{\diag}{diag}
\algrenewcommand\algorithmicrequire{\textbf{Input:}}
\algrenewcommand\algorithmicensure{\textbf{Output:}}
\newcommand\argmin{\mathop{\mathrm{argmin}}}
\newcommand\calR{\mathcal{R}}
\newcommand\calN{\mathcal{N}}
\newcommand\calX{\mathcal{X}}
\newcommand\calY{\mathcal{Y}}
\newcommand\calA{\mathcal{A}}
\newcommand\calL{\mathcal{L}}
\newcommand\calP{\mathcal{P}}
\newcommand\calG{\mathcal{G}}
\newcommand\rA{[\mathcal{A}]}
\newcommand\rtA{[\mathcal{A}^{*}]}
\numberwithin{equation}{section}
\numberwithin{figure}{section}
\numberwithin{table}{section}
\newtheorem{theorem}{Theorem}[section]
\newtheorem{lemma}{Lemma}[section]
\newtheorem{proposition}{Proposition}[section]
\newtheorem{remark1}{Remark}[section]
\newenvironment{proof}{\begin{trivlist}
    \item[\hskip\labelsep{\bf Proof.}]}{$\hfill\Box$\end{trivlist}}
\newenvironment{proofof}[1]{\begin{trivlist}
    \item[\hskip\labelsep{\bf Proof of {#1}.}]}{$\hfill\Box$\end{trivlist}}
{\theoremstyle{plain} \theorembodyfont{\rmfamily}
}
\newcommand{\vertiii}[1]{{\left\vert\kern-0.25ex\left\vert\kern-0.25ex\left\vert #1 
\right\vert\kern-0.25ex\right\vert\kern-0.25ex\right\vert}}
\title{Characterizing GSVD by singular value expansion of linear operators and its computation}
\author{Haibo Li
\thanks{School of Mathematics and Statistics, The University of Melbourne, Parkville, VIC 3010, Australia.
\href{mailto:haibo.li@unimelb.edu.au}{haibo.li@unimelb.edu.au}} 
% \ \ xxx
% \thanks{School of Mathematics and Statistics, xxx.
% \href{xxx}{xxx}}
}
\date{}
\begin{document}
% \pagewiselinenumbers
\maketitle

\begin{abstract}
	The generalized singular value decomposition (GSVD) of a matrix pair $\{A, L\}$ with $A\in\mathbb{R}^{m\times n}$ and $L\in\mathbb{R}^{p\times n}$ generalizes the singular value decomposition (SVD) of a single matrix. In this paper, we provide a new understanding of GSVD from the viewpoint of SVD, based on which we propose a new iterative method for computing nontrivial GSVD components of a large-scale matrix pair. By introducing two linear operators $\calA$ and $\calL$ induced by $\{A, L\}$ between two finite-dimensional Hilbert spaces and applying the theory of singular value expansion (SVE) for linear compact operators, we show that the GSVD of $\{A, L\}$ is nothing but the SVEs of $\calA$ and $\calL$. This result characterizes completely the structure of GSVD for any matrix pair with the same number of columns. As a direct application of this result, we generalize the standard Golub-Kahan bidiagonalization (GKB) that is a basic routine for large-scale SVD computation such that the resulting generalized GKB (gGKB) process can be used to approximate nontrivial extreme GSVD components of $\{A, L\}$, which is named the gGKB\_GSVD algorithm. We use the GSVD of $\{A, L\}$ to study several basic properties of gGKB and also provide preliminary results about convergence and accuracy of gGKB\_GSVD for GSVD computation. Numerical experiments are presented to demonstrate the effectiveness of this method.
\end{abstract}

\paragraph{Keywords} 
GSVD, linear operator, singular value expansion, generalized Golub-Kahan bidiagonalization, Krylov subspace

\paragraph{MSC codes} 
15A22, 47A05, 65F99

% \tableofcontents

%%-----------------------------------------------------------------------------
\section{Introduction}
% Describe the importance of GSVD
The generalized singular value decomposition (GSVD) of a matrix pair is an extension of the singular value decomposition (SVD) of a single matrix. First introduced by Van Loan \cite{Van1976} and further developed by many others \cite{Paige1981,Sun1983,Van1985}, now the GSVD has been a standard matrix decomposition \cite{Bjorck1996,Golub2013}. The GSVD provides an important mathematical tool for analyzing relationships between two sets of variables or matrices, which is particularly useful in various applications, including signal processing \cite{nakamura2012real,speiser1984signal}, statistics \cite{park2005relationship,paige1985general}, computational biology \cite{alter2003generalized}, and many others \cite{bhuyan2004application,ewerbring1989canonical,kaagstrom1984generalized,kuo2000applications,howland2003structure}.

% GSVD and its basic property
Let $I_{k}$ denote the identity matrix of order $k$ and $\mathbf{0}$ denote the zero matrix or vector with dimensions clarified by the context. For any two matrices with the same number of columns, the general-form GSVD is stated as follows \cite{Paige1981}:
\begin{theorem}[GSVD]\label{thm_gsvd}
	Let $A\in\mathbb{R}^{m\times n}$ and $L\in\mathbb{R}^{p\times n}$ with $\mathrm{rank}((A^{\top},L^{\top})^{\top})=r$. Then the GSVD of $\{A, L\}$ is
	\begin{subequations}\label{GSVD1}
	\begin{equation}\label{gsvd1}
	  A = P_{A}C_AX^{-1} , \ \ \  L = P_{L}S_LX^{-1} ,
  \end{equation}
	  with
	  \begin{equation}\label{gsvd2}
		  C_A =
		  \bordermatrix*[()]{%
			  \Sigma_{A} & \mathbf{0} & m \cr
			  r &   n-r   \cr
		  } \ , \  \ \ 
		  S_L = \bordermatrix*[()]{%
			  \Sigma_{L} & \mathbf{0} & p \cr
			  r &   n-r   \cr
		  } 
	  \end{equation}
	  and
	  \begin{equation}\label{gsvd3}
		  \Sigma_{A} =
		  \bordermatrix*[()]{%
			  I_{q_1}  &  &  & q_1 \cr
			  &  C_{q_2}  &  & q_2 \cr
			  &  & \mathbf{0}  & m-q_1-q_2 \cr
			  q_1 & q_2 & q_3
		  } , \ \ \
		  \Sigma_{L} =
		  \bordermatrix*[()]{%
			  \mathbf{0}  &  &  & p-r+q_1 \cr
			  &  S_{q_2}  &  & q_2 \cr
			  &  & I_{q_3}  & q_3 \cr
			  q_1 & q_2 & q_3
		  },
	  \end{equation}	
	  \end{subequations}
	  where $q_1+q_2+q_3=r$, and  $P_{A}\in \mathbb{R}^{m\times m}$, $P_{L}\in \mathbb{R}^{p\times p}$ are orthogonal, $X\in\mathbb{R}^{n\times n}$ is invertible, and $\Sigma_{A}^{T}\Sigma_A+\Sigma_{L}^{T}\Sigma_L=I_{r}$. The values of $q_1$, $q_2$ and $q_3$ are defined internally by the matrices $A$ and $L$.
\end{theorem}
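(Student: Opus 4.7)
The plan is to reduce the GSVD of the pair $\{A,L\}$ to a combination of two classical decompositions of single matrices: the thin SVD of the stacked matrix $M=\binom{A}{L}\in\mathbb{R}^{(m+p)\times n}$, together with the CS decomposition of its partitioned left singular vectors. First I would compute the compact SVD $M=U_1\Sigma_r V_1^{\top}$, where $\Sigma_r\in\mathbb{R}^{r\times r}$ is diagonal and nonsingular by the rank-$r$ hypothesis, $V_1\in\mathbb{R}^{n\times r}$ has orthonormal columns spanning $\mathcal{N}(M)^{\perp}$, and $U_1\in\mathbb{R}^{(m+p)\times r}$ has orthonormal columns. I would then extend $V_1$ to an orthogonal $V=[V_1,V_2]\in\mathbb{R}^{n\times n}$; the columns of $V_2$ span $\mathcal{N}(A)\cap\mathcal{N}(L)$, the subspace on which both $A$ and $L$ vanish.

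Next I would block-partition $U_1=\binom{W_A}{W_L}$ with $W_A\in\mathbb{R}^{m\times r}$ and $W_L\in\mathbb{R}^{p\times r}$. Column-orthonormality of $U_1$ gives $W_A^{\top}W_A+W_L^{\top}W_L=I_r$, which places the pair $\{W_A,W_L\}$ exactly in the setting of the CS decomposition. Applying the CS decomposition produces orthogonal $P_A\in\mathbb{R}^{m\times m}$, $P_L\in\mathbb{R}^{p\times p}$ and $Q\in\mathbb{R}^{r\times r}$ such that $W_A=P_A\Sigma_A Q^{\top}$ and $W_L=P_L\Sigma_L Q^{\top}$, where $\Sigma_A,\Sigma_L$ have the block structure in \eqref{gsvd3}; the identity $\Sigma_A^{\top}\Sigma_A+\Sigma_L^{\top}\Sigma_L=I_r$ then follows from the constraint on $\{W_A,W_L\}$, and the integers $q_1,q_2,q_3$ are read off as the numbers of unit, strictly interior, and zero singular values produced by the CS reduction.

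Finally I would assemble the decomposition. From $AV_1=W_A\Sigma_r=P_A\Sigma_A Q^{\top}\Sigma_r$ together with $AV_2=\mathbf{0}$, a direct computation gives
\[ AV=P_A\,[\Sigma_A,\mathbf{0}]\begin{pmatrix}Q^{\top}\Sigma_r & \mathbf{0}\\ \mathbf{0} & I_{n-r}\end{pmatrix}, \]
from which I identify $C_A=[\Sigma_A,\mathbf{0}]$ (matching \eqref{gsvd2}) and set $X:=V\begin{pmatrix}\Sigma_r^{-1}Q & \mathbf{0}\\ \mathbf{0} & I_{n-r}\end{pmatrix}$, so that $A=P_A C_A X^{-1}$; the matrix $X$ is invertible because each diagonal block is. The identical manipulation with $L$ in place of $A$ delivers $L=P_L S_L X^{-1}$ through the \emph{same} $X$, completing \eqref{gsvd1}.

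The main obstacle is the CS decomposition itself: establishing the precise $3\times 3$ block pattern of \eqref{gsvd3} with counts $q_1,q_2,q_3$ intrinsic to $\{A,L\}$ is the nontrivial geometric content, typically requiring either a careful simultaneous orthogonal reduction of $W_A$ and $W_L$ or, equivalently, a spectral analysis of $W_A^{\top}W_A=I_r-W_L^{\top}W_L$. Viewed through the lens advertised in the abstract, this same obstacle is repackaged as the SVE of the compact operators $\calA,\calL$ acting between finite-dimensional Hilbert spaces equipped with the $M^{\top}M$-induced inner product, with $q_1,q_2,q_3$ counting the singular values of $\calA$ equal to $1$, lying in $(0,1)$, and equal to $0$, respectively.
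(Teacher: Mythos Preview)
The paper does not prove \Cref{thm_gsvd}; it is stated as a classical result with a citation to Paige~\cite{Paige1981}, and the paper's own contributions (\Cref{thm:sve_gsvd1,thm:sve_gsvd2}) take the GSVD as given and reinterpret it via the SVE of the operators $\calA,\calL$. So there is no ``paper's own proof'' to compare against.

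Your proof is correct and is essentially the standard Paige--Saunders argument: compact SVD of the stacked matrix followed by the CS decomposition of the partitioned left singular factor, with the nonsingular $X$ assembled from $V$, $\Sigma_r^{-1}$ and the CS rotation $Q$. All the algebraic steps check out, including the identification of $X$ and the fact that the same $X$ serves both $A$ and $L$. You are right that the substantive content is the CS decomposition and the intrinsic determination of $q_1,q_2,q_3$; this is usually handled by diagonalizing the symmetric matrix $W_A^{\top}W_A$ (whose eigenvalues lie in $[0,1]$) and reading off the multiplicities of $1$, values in $(0,1)$, and $0$.

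One clarification on your final paragraph: the SVE viewpoint in the paper is not an alternative route to \emph{existence} of the GSVD. The paper's \Cref{thm:sve_gsvd1,thm:sve_gsvd2} assume \Cref{thm_gsvd} and then show that the resulting $P_A,\Sigma_A,\widetilde{X}_1$ furnish the SVE of $\calA$ (and similarly for $\calL$). So the SVE framework is a \emph{characterization} of the already-established decomposition, not a replacement for the CS-based existence proof you outline.
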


If $r=n$, then $\{A, L\}$ is called a regular matrix pair. Discussions about GSVD for regular and nonregular matrix pairs can be found in \cite{LiRC1993,Sun1983} and \cite{Paige1981,Sun1983b,Paige1984}, respectively. Write $C_{q_2}=\mbox{diag}(c_{q_1+1}, \dots, c_{q_1+q_2})$ with $1>c_{q_1+1}\geq \cdots \geq c_{q_1+q_2}>0$ and $S_{q_2}=\mbox{diag}(s_{q_1+1}, \dots, s_{q_1+q_2})$ with $0<s_{q_1+1}\leq \cdots \leq s_{q_1+q_2}<1$. Let $c_{1}=\cdots=c_{q_1}=1, \ c_{q_1+q_2+1}=\cdots=c_{r}=0$ and $s_{1}=\cdots=s_{q_1}=0, \ s_{q_1+q_2+1}=\cdots=s_{r}=1$. Write $X=(x_1,\ldots,x_n)$, $P_A=(p_{A,1},\ldots,p_{A,m})$ and $P_L=(p_{L,1},\ldots,p_{L,p})$. We call the tuple $(c_i, s_i, x_{i}, p_{A,i},p_{L,i})$ the $i$-th nontrivial GSVD components, and the $i$-th largest generalized singular value is $\gamma_{i}:=c_{i}/s_{i}$ satisfying $c_{i}^{2}+s_{i}^{2}=1$, where $1\leq i\leq r$. In this paper, we consider the nontrivial GSVD components and their computations.

% review GSVD computations
Despite its remarkable capabilities, computing the GSVD poses significant challenges. Early computational approaches for the GSVD were built upon adaptations of algorithms designed for the SVD; for small-scale matrices, there are several such numerical algorithms for full GSVD computation \cite{Van1985,paige1986computing,bai1993computing}. Recent development on stable computation of the CS decomposition (CSD) \cite{sutton2012stable} provides another alternative for small-scale GSVD computation. For large and sparse problems, obtaining the full GSVD may not be feasible, yet it is often necessary to compute only a subset of GSVD components relevant to practical applications. Typically, this refers to certain extreme GSVD components, which are those with the largest or smallest corresponding generalized singular values, or interior GSVD components, which are those whose corresponding desired generalized singular values are closest to a specified target.

For large-scale GSVD computation, the first step is usually tranforming it as an equivalent generalized eigendecomposition (GED) problem \cite{Bai2000} or CSD problem. The joint bidiagonalization (JBD) method proposed by Zha \cite{Zha1996} can be used to compute a few extreme GSVD components, which is essentially an indirect procedure for CSD of the Q-factor of a regular $\{A, L\}$ (i.e. the Q matrix in the QR factorization). This method relies on a JBD process that iteratively reduces $\{A, L\}$ to bidiagonal forms simultaneously. Jia and Li \cite{jiali2021,jia2023joint} made a detailed analysis for the numerical behavior of the JBD method and the convergence behavior for extreme GSVD components in finite precision arithmetic. They proposed the semi-orthogonalization strategy and design a partial reorthogonalization procedure to maintain regular convergence of the computed approximate GSVD components. Subsequently, Li \cite{li2024joint} analyzed the influence of computational errors resulting from inaccurate inner iterations in JBD on the convergence and final accuracy of computed GSVD components and proposed a modified version of the JBD method. Recently, Alvarruiz et al. \cite{alvarruiz2024thick} developed a thick restart technique for JBD to compute a partial GSVD, enabling the storage and computation cost further saved. 
On the other hand, the Jacobi--Davidson type algorithms \cite{hochstenbach2006jacobi} are capable of computing a few interior GSVD components. A representative work is the Jacobi--Davidson GSVD (JDGSVD) method proposed by Hochstenbach \cite{hochstenbach2009jacobi}, which formulates the GSVD of $\{A, L\}$ as a GED problem of an augmented symmetric matrix pair. This method is further analyzed and developed in several subsequent work; see e.g. \cite{refahi2017new,huang2021choices,huang2022two}. As the development of contour integration technique for eigenvalue problems of finding interior eigenvalues \cite{sakurai2003projection,peter2014feast}, recently a contour integral-based algorithm has been adopted for interior GSVD components computation \cite{liu2023contour}.

% new understanding of GSVD, review a very few existing work (3-5 papers)
Apart from regarding the GSVD as an equivalent CSD or GED, there is very little work on understanding and analyzing GSVD from other perspectives. In \cite{chu1997variational}, the authors studied the GSVD using a variational formulation analogous to that of the SVD, providing a new understanding of the generalized singular vectors. Recently, by treating $(A^{\top}, L^{\top})^{\top}$ (more precisely, the range space of it) as a point in the real Grassmann manifold $\mathrm{Gr}(m+p,r)$ --- the manifold composed of all $r$-dimensional subspaces of $\mathbb{R}^{m+p}$ --- the authors in \cite{edelman2020gsvd} interpreted a modified form of GSVD as a coordinate representation of $(A^{\top}, L^{\top})^{\top}$. For developing practical GSVD algorithms, however, these new perspectives on GSVD are not enough. It would be beneficial to understand the GSVD from the viewpoint of SVD so that many existing algorithms for large-scale SVD are possible to be adapted for large-scale GSVD computation. One well-known result is that $\{\gamma_i\}$ are the singular values of $AL^{\dag}$ if $L$ has full column rank \cite{Zha1996}, where ``$\dag$'' is the Moor-Penrose pseudoinverse. But for noninvertible and nonsquare $L$, generally the GSVD of $\{A, L\}$ is not related to the SVD of $AL^{\dag}$; this issue becomes much more complicated for nonregular matrix pairs.

% This paper provides a new understanding of the GSVD and computational approach
In this paper, we provide a new understanding of GSVD from the viewpoint of SVD. This new perspective relies on the theory of singular value expansion (SVE) for linear compact operators \cite[\S 2.2]{Engl2000}, which is essentially the SVD if the compact operator is a matrix between two Euclidean spaces. Denote by $\calR(\cdot)$ and $\calN(\cdot)$ the range space and null space of a matrix, respectively. By defining the positive semidefinite matrix $M=A^{\top}A+L^{\top}L$, we first investigate the structure of trivial and nontrivial GSVD components $x_i$, showing that those trivial $\{x_i\}$ form a basis for $\calN(M)$ and any nontrivial $x_i$ belongs to the coset $\bar{x}_i+\calN(M)$, where $\bar{x}_i\in\calR(M)$ is a nontrivial GSVD component. Then we consider the nontrivial $x_i \in \calR(M)$ and other corresponding GSVD components. By introducing a linear operator $\calA$ induced by $\{A, L\}$ between two finite-dimensional Hilbert spaces, where a non-Euclidean inner-product is used, we show that the SVE of $\calA$ is just the nontrivial GSVD components of $A$, i.e. the first decomposition in \eqref{gsvd1}. Similarly, we introduce a linear operator $\calL$ induced by $\{A, L\}$ and show that the SVE of $\calL$ is just the nontrivial GSVD components of $L$. This result reveals that the nontrivial part of the GVSD of $\{A, L\}$ is nothing but the SVEs of the two linear operators induced by $\{A, L\}$. Combined with the trivial GSVD components $\{x_i\}$, it completely characterizes the structure of GSVD for any matrix pair with the same number of columns.

% relations of SVD and GKB; variants of generalizations of GKB
As a direct application of the above result, we propose a new iterative method for computing several extreme nontrivial GSVD components of $\{A, L\}$. This iterative method is a natural extension of the Golub-Kahan bidiagonalization (GKB) method for large-scale SVD computation \cite{Golub1965}, which iteratively reduces a matrix to a bidiagonal form by a Lanczos-type iterative process. There are several variants and extensions for the standard GKB of a single matrice, which are proposed to solve large-scale generalized least squares problems \cite{benbow1999solving,arioli2013generalized}, saddle point problems \cite{dumitrasc2023generalized}, or regularization of inverse problems \cite{chung2017generalized,chung2018efficient,li2023preconditioned,li2023subspace,li2024scalable}; most of them are named the \textit{generalized Golub-Kahan bidiagonalization} (gGKB) while some have other different names. As a natural analogy of the standard GKB for SVD computation, we develop an operator-type GKB for linear operators $\calA$ and $\calL$ to approximate their SVE components, which is also named the \textsf{gGKB} process. We derive matrix-form recursive relations for this operator-type GKB so that it can be used in practical computations. Moreover, this approach offers a unified and general treatment for extending the standard GKB, which can be used to derive nearly all of the aforementioned gGKB processes. 

% gGKB for GSVD; properties of gGKB using GSVD
Using the GSVD of $\{A, L\}$, we study several basic properties of the proposed \textsf{gGKB} process. Due to the correspondence of GSVD and SVE, the \textsf{gGKB} method can approximate well the extreme nontrivial GSVD components of $\{A, L\}$, resulting in the \textsf{gGKB\_GSVD} algorithm. We derive a relative residual norm and its sharp upper bound for the computed nontrivial GSVD components, which is a good measure of the approximating accuracy and can be used in a stopping criterion for practical computations. Several preliminary results about the convergence and accuracy of \textsf{gGKB\_GSVD} in exact arithmetic are provided, showing the effectiveness of this method.

% The outline of the paper
The paper is organized as follows. In \Cref{sec2}, we review several basic properties of the GSVD. In \Cref{sec3}, we introduce two linear operators induced by $\{A, L\}$ to characterize the structure of GSVD by the SVEs of them. In \Cref{sec4} we propose the new \textsf{gGKB} process and study its basic properties. In \Cref{sec5}, we propose the \textsf{gGKB\_GSVD} algorithm for computing nontrivial GSVD components. Numerical experiments are presented in \Cref{sec6}, and concluding remarks follow in \Cref{sec7}.

%%-----------------------------------------------------------
\section{GSVD, SVD and Golub-Kahan bidiagonalization}\label{sec2}
We review several basic properties of the GSVD of $\{A, L\}$ presented in \Cref{thm_gsvd}. The nontrivial generalized singular values of $\{A, L\}$ in descending order are
\begin{equation}\label{gsvd_value1}
	\underbrace{\infty, \dots, \infty}_{q_1}, \ \
	\underbrace{c_{q_1+1}/s_{q_1+1}, \dots, c_{q_1+q_2}/s_{q_1+q_2}}_{q_2}, \ \ 
	\underbrace{0, \dots, 0}_{q_3}.
\end{equation}
We remark that the three numbers $q_1$, $q_2$, $q_3$ are uniquely determined by the property of $\{A, L\}$, and some of them may be zero in certain instances. The nontrivial GSVD components are linked by the vector-form relations
\begin{equation}\label{GSV}
	\begin{cases}
		Ax_i=c_i p_{A,i}  \\
		Lx_i=s_i p_{L,i}    \\
		s_i A^Tp_{A,i}=c_iL^{T}p_{L,i}  
	\end{cases}
\end{equation}
for $i=1,\dots,r$. For those trivial GSVD components, it holds that
\begin{equation}
	Ax_i = \mathbf{0}, \ \ Lx_i = \mathbf{0}, \ \ 
	A^Tp_{A,i} = \mathbf{0}, \ \ L^Tp_{L,i} = \mathbf{0} 
\end{equation}
for $i=r+1,\dots, n$. 
The following result describes the structure of trivial and nontrivial GSVD components $\{x_i\}$.

\begin{proposition}\label{lem:gsvd_new}
	Let $M=A^{\top}A+L^{\top}L$ and partition $X$ as $\bordermatrix*[()]{
		X_1 & X_2 & \cr
		r &  n-r   \cr
	}$. Then $\calR(X_2)=\calN(M)$. Moreover, let 
	\begin{equation}
		\bar{X}=(\bar{X}_1 \ \ X_2), \ \ \ \bar{X}_1=(\calP_{\calR(M)}x_1,\dots,\calP_{\calR(M)}x_r) .
	\end{equation}
	Then it holds
	\begin{equation}\label{gsvd_new}
		A = P_{A}C_{A}\bar{X}^{-1}, \ \ \ L = P_{L}S_{L}\bar{X}^{-1} .
	\end{equation}
\end{proposition}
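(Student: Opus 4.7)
The plan is to prove the two claims in sequence, with the first being straightforward dimension counting and the second reducing to a careful bookkeeping argument about how the projection onto $\calR(M)$ affects each column.

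First I would establish $\calR(X_2)=\calN(M)$. Writing $K=(A^\top,L^\top)^\top$ gives $M=K^\top K$, so $\mathrm{rank}(M)=\mathrm{rank}(K)=r$ and $\dim\calN(M)=n-r$. For the inclusion $\calR(X_2)\subseteq\calN(M)$, I would use the trivial GSVD relations $Ax_i=\mathbf{0}$ and $Lx_i=\mathbf{0}$ for $i=r+1,\dots,n$ to conclude $Mx_i=A^\top Ax_i+L^\top Lx_i=\mathbf{0}$. Since $X$ is invertible, the columns of $X_2$ are linearly independent, so $\dim\calR(X_2)=n-r=\dim\calN(M)$, and the equality follows.

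Next I would prove the decomposition \eqref{gsvd_new}. The key observation is that for any $i$, the decomposition $x_i=\calP_{\calR(M)}x_i+\calP_{\calN(M)}x_i$ puts the second piece into $\calN(M)=\calR(X_2)$, on which both $A$ and $L$ vanish. Hence $A(\calP_{\calR(M)}x_i)=Ax_i$ and $L(\calP_{\calR(M)}x_i)=Lx_i$ for every $i$, which gives $A\bar{X}_1=AX_1$ and $L\bar{X}_1=LX_1$. Combined with $AX_2=\mathbf{0}$, $LX_2=\mathbf{0}$ (which match the zero blocks in $C_A$ and $S_L$ respectively), I obtain $A\bar{X}=AX=P_A C_A$ and $L\bar{X}=LX=P_L S_L$.

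It remains to verify that $\bar{X}$ is invertible, which I expect to be the only step requiring a short argument. Because $\calR(M)$ and $\calN(M)$ are orthogonal complements with dimensions $r$ and $n-r$, and because the columns of $\bar{X}_1$ lie in $\calR(M)$ while those of $X_2$ lie in $\calN(M)$ and are already linearly independent, invertibility reduces to showing that the $r$ columns of $\bar{X}_1$ are linearly independent. If $\bar{X}_1\alpha=\mathbf{0}$ for some $\alpha\in\mathbb{R}^r$, then $\calP_{\calR(M)}(X_1\alpha)=\mathbf{0}$, so $X_1\alpha\in\calN(M)=\calR(X_2)$; hence there exists $\beta$ with $X_1\alpha-X_2\beta=\mathbf{0}$, and invertibility of $X$ forces $\alpha=\mathbf{0}$. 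Thus $\bar{X}_1$ has full column rank, $\bar{X}$ is invertible, and \eqref{gsvd_new} follows from the displayed identities by multiplying on the right by $\bar{X}^{-1}$.
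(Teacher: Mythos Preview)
Your proof is correct and follows essentially the same route as the paper: both establish $\calR(X_2)=\calN(M)$ (the paper via $\calN(M)=\calN(A)\cap\calN(L)$, you via the equivalent inclusion-plus-dimension argument) and then use $A\calP_{\calN(M)}x_i=L\calP_{\calN(M)}x_i=\mathbf{0}$ to conclude $A\bar X=AX$ and $L\bar X=LX$. Your argument is in fact more complete than the paper's, which asserts \eqref{gsvd_new} without explicitly checking that $\bar X$ is invertible; your verification of this point is clean and correct.
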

\begin{proof}
	It is clear that $\calN(M)=\calN(A)\cap\calN(L)$, and using th GSVD of $\{A,L\}$, it is easy to obtain that $\calN(A)\cap\calN(L)=\calR(X_2)$. Thus, we have $\calR(X_2)=\calN(M)$. Using the relation that $\calP_{\calR(M)}x_i=x_i-\calP_{\calN(M)}x_i$, for $1\leq i\leq r$ we have
	\begin{equation*}
		A\calP_{\calR(M)}x_i = Ax_i-A\calP_{\calN(M)}x_i=Ax_i, \ \ 
		L\calP_{\calR(M)}x_i = Lx_i-L\calP_{\calN(M)}x_i=Lx_i .
	\end{equation*}
	Using the above two relations, it is easy to verify \eqref{gsvd_new}.
\end{proof}

Since $\mathrm{dim}(\calN(M))=n-r=\mathrm{rank}(\{x_{i}\}_{i=r+1}^{n})$, it follows that $\{x_{i}\}_{i=r+1}^{n}$ forms a basis for $\calN(M)$. \Cref{lem:gsvd_new} also indicates that for any $x_i$ with $1\leq i\leq r$, $\calP_{\calR(M)}x_i$ is also an $i$-th generalized singular vector of $\{A,L\}$. Therefore, any nontrivial $x_i$ can be decomposed into two components, one being $\calP_{\calR(M)}x_i\in\calR(M)$ and the other being an arbitrary vector in $\calN(M)$, which means that any nontrivial $x_i$ belongs to the coset $\bar{x}_i+\calN(M)$, where $\bar{x}_i\in\calR(M)$ is the $i$-th nontrivial GSVD component. In particular, we can focus on those nontrivial $x_i$ in $\calR(M)$, which results in the new form of GSVD \eqref{gsvd_new}. In the subsequent part, we always consider this form of GSVD by requiring
\begin{equation}\label{new_xi}
	x_i\in\calR(M) \ \ \mathrm{for} \ \ 1\leq i\leq r .
\end{equation}

There exists a direct relationship between the SVD and GSVD for a matrix pair with a special property. If $L$ has full column rank, it follows from \eqref{GSVD1} that $r=n$ and $q_1=0$, leading to
\begin{equation}\label{SVD1}
	AL^{\dag} = P_{A}C_{A}X^{-1}[(P_{L}S_{L})X^{-1}]^{\dag}
	= P_{A}C_{A}X^{-1}X(P_{L}S_{L})^{\dag} = P_{A}(C_{A}S_{L}^{\dag})P_{L}^{\top} ,
\end{equation}
where we have used the property that $(M_1M_2)^{\dag}=M_{2}^{\dag}M_{1}^{\dag}$ if $M_1$ has full column rank and $M_2$ has full row rank. Therefore, the SVD of $AL^{\dag}$ is $P_{A}(C_{A}S_{L}^{\dag})P_{L}^{\top}$ with the singular values be $\{\gamma_i\}_{i=1}^{n}$. 

For the above case, one can compute the SVD of $AL^{\dag}$ to get the GSVD components \footnote{For numerical computations, it is not recommended to explicitly form $AL^{\dag}$ due to its numerical unstability, especially when $L$ is close to column rank-deficient.}. For a large-scale matrix, the GKB process is a basic routine for computing a few extreme SVD components. At each iteration, it reduces the matrix to a lower-order bidiagonal matrix and generates two Krylov subspaces. The Rayleigh-Ritz procedure is then exploited to approximate extreme SVD components using the bidiagonal matrix and Krylov subspaces \cite{Bai2000}.

In the following part of the paper, we characterize the GSVD from the viewpoint of SVD for any matrix pair with the same number of columns. Then we generalize the GKB process so that it can be used to compute extreme GSVD components.

%%---------------------------------------------------------
\section{Characterizing GSVD by singular value expansion of linear operators}\label{sec3}
We first discuss linear operators between two finite-dimensional Hilbert spaces. Then we study the GSVD of $\{A, L\}$ using the singular value expansion of linear operators. Note that \Cref{sec3.1} is quite general without requiring $M=A^{\top}A+L^{\top}L$.

\subsection{Linear operator induced by matrices}\label{sec3.1}
Suppose $G\in\mathbb{R}^{m\times m}$ is symmetric positive definite. It is obviously that $\langle u,u'\rangle_{G}:=u^{\top}Gu'$ defines an inner product on $\mathbb{R}^{m}$ such that $(\mathbb{R}^{m},\langle\cdot,\cdot\rangle_{G})$ is an $m$-dimensional Hilbert space. On the other hand, for any symmetric positive semidefinite matrix $M\in\mathbb{R}^{n\times n}$ with $\mathrm{rank}(M)=r$, the bilinear form $\langle v,v'\rangle_{M}:=v^{\top}Mv'$ is not a well-defined inner product on $\mathbb{R}^{n}$ if $r<n$. In this case, we consider the inner product on the subspace $\calR(M)$.

\begin{lemma}
	The bilinear form $\langle v,v'\rangle_{M}:=v^{\top}Mv'$ for any $v, v'\in\calR(M)$ is an inner product, and $(\calR(M),\langle\cdot,\cdot\rangle_{M})$ is an $r$-dimensional Hilbert space.
\end{lemma}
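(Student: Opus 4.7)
The plan is to verify the three inner-product axioms on $\calR(M)$ and then invoke finite-dimensionality to get completeness for free. Bilinearity of $(v,v')\mapsto v^{\top}Mv'$ on $\mathbb{R}^n$ is immediate, and symmetry follows from $M^{\top}=M$, so these carry over to the restriction on $\calR(M)$ without comment. The only nontrivial axiom is positive definiteness.

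To establish positive definiteness I would use the symmetric positive semidefinite structure of $M$: write $M=B^{\top}B$ (for instance via a Cholesky-type or eigenvalue factorization). Then $\langle v,v\rangle_{M}=\|Bv\|_2^{2}\geq 0$ for every $v\in\mathbb{R}^n$. Suppose $v\in\calR(M)$ with $\langle v,v\rangle_M=0$; then $Bv=0$, hence $Mv=B^{\top}Bv=0$, so $v\in\calN(M)$. The key observation is that for a symmetric matrix $\calR(M)$ and $\calN(M)$ are Euclidean-orthogonal complements of each other in $\mathbb{R}^n$, so $\calR(M)\cap\calN(M)=\{0\}$, forcing $v=0$. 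This is the step I expect to carry the weight of the argument.

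For the Hilbert space conclusion, $\calR(M)$ is a linear subspace of $\mathbb{R}^n$ and $\dim\calR(M)=\mathrm{rank}(M)=r$, so it is an $r$-dimensional real vector space. Any finite-dimensional inner-product space is automatically complete (all norms on a finite-dimensional space are equivalent, and $\mathbb{R}^r$ under the Euclidean norm is complete), so $(\calR(M),\langle\cdot,\cdot\rangle_M)$ is a Hilbert space of dimension $r$.

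No major obstacle is anticipated; the proof is essentially a one-paragraph verification. The only subtlety worth stating explicitly is why restricting to $\calR(M)$ is exactly what rescues positive definiteness from the kernel of $M$, which the orthogonal decomposition $\mathbb{R}^n=\calR(M)\oplus\calN(M)$ for symmetric $M$ makes transparent.
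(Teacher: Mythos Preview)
Your proof is correct and follows essentially the same approach as the paper: both reduce positive definiteness to the observation that $v^{\top}Mv=0$ forces $v\in\calN(M)$, and then use $\calR(M)\cap\calN(M)=\{\mathbf{0}\}$ for symmetric $M$. The paper is slightly terser (it does not introduce the factorization $M=B^{\top}B$ and leaves the finite-dimensional completeness implicit), but the substance is identical.
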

\begin{proof}
	The statement $\mathrm{dim}(\calR(M))=\mathrm{rank}(M)$ is a basic property. We need to show that $\langle \cdot,\cdot\rangle_{M}$ is indeed an inner product, i.e., it is a symmetric and positive bilinear form on $\calR(M)\times \calR(M)$. We only need to prove the positiveness. To see it, let $v\in\calR(M)$ such that $\langle v,v\rangle_{M}=v^{\top}Mv=0$. It follows $v\in\calN(M)$. Note that $\calR(M)\cap \calN(M)=\{\mathbf{0}\}$ since $M$ is symmetric, which leads to $v=\mathbf{0}$.
\end{proof}

Given a matrix $A\in\mathbb{R}^{m\times n}$. Define the linear map 
\begin{equation}\label{Amap}
	\calA: (\calR(M),\langle\cdot,\cdot\rangle_{M}) \rightarrow (\mathbb{R}^{m},\langle\cdot,\cdot\rangle_{G}), \ \ \ 
	v \mapsto A v ,
\end{equation}
where $v$ and $Av$ are column vectors under the canonical bases of $\mathbb{R}^{n}$ and $\mathbb{R}^{m}$. Let $W_r\in\mathbb{R}^{n\times r}$ and $Z\in\mathbb{R}^{m\times m}$ be two matrices whose columns constitute orthonormal bases of $(\calR(M),\langle\cdot,\cdot\rangle_{M})$ and $(\mathbb{R}^{m},\langle\cdot,\cdot\rangle_{G})$, respectively, i.e. $W_{r}^{\top}MW_{r}=I_{r}$ and $Z^{\top}GZ=I_{m}$. Then we have the commutative diagram:
\begin{equation}\label{commu_dig1}
	\begin{tikzcd}
		(\calR(M),\langle\cdot,\cdot\rangle_{M}) \arrow[r, "\calA"] \arrow[from=d, "\pi_1"]
		& (\mathbb{R}^{m},\langle\cdot,\cdot\rangle_{G}) \arrow[from=d, "\pi_2"'] \\
		(\mathbb{R}^{r},\langle\cdot,\cdot\rangle_{2}) \arrow[r, "\rA"']
		& (\mathbb{R}^{m},\langle\cdot,\cdot\rangle_{2}) \ ,
	\end{tikzcd}
\end{equation}
where $\rA$ denotes the matrix representation of $\calA$ under bases $W_r$ and $Z$, and $\pi_1$ and $\pi_2$ are two linear maps:
\begin{align}
	& \pi_{1}: (\mathbb{R}^{r},\langle\cdot,\cdot\rangle_{2}) \rightarrow (\calR(M),\langle\cdot,\cdot\rangle_{M}), \ \ \ 
	x \mapsto W_{r}x, \\
	& \pi_{2}: (\mathbb{R}^{m},\langle\cdot,\cdot\rangle_{2}) \rightarrow (\mathbb{R}^{m},\langle\cdot,\cdot\rangle_{G}), \ \ \  \ \ \ 
	y \mapsto Zy .
\end{align}
Note that $\pi_1$ and $\pi_2$ are two isomorphism maps such that  $(\calR(M),\langle\cdot,\cdot\rangle_{M})\cong (\mathbb{R}^{r},\langle\cdot,\cdot\rangle_{2})$ and $(\mathbb{R}^{m},\langle\cdot,\cdot\rangle_{G})\cong (\mathbb{R}^{m},\langle\cdot,\cdot\rangle_{2})$. Since $\calA$ is a bounded linear operator, we can define its adjoint 
\begin{equation}
	\calA^{*}: (\mathbb{R}^{m},\langle\cdot,\cdot\rangle_{G}) \rightarrow (\calR(M),\langle\cdot,\cdot\rangle_{M}), \ \ \ 
	u \mapsto \calA^{*}u
\end{equation}
by the relation 
\begin{equation}
	\langle \calA v, u \rangle_{G} = \langle \calA^{*} u, v \rangle_{M}
\end{equation}
for any $v\in(\calR(M),\langle\cdot,\cdot\rangle_{M})$ and $u\in(\mathbb{R}^{m},\langle\cdot,\cdot\rangle_{G})$. We have the following corresponding commutative diagram:
\begin{equation}\label{commu_dig2}
	\begin{tikzcd}
		(\calR(M),\langle\cdot,\cdot\rangle_{M}) \ar[from=r, "\calA^{*}"'] \ar[from=d, "\pi_1"]
		& (\mathbb{R}^{m},\langle\cdot,\cdot\rangle_{G}) \ar[from=d, "\pi_2"'] \\
		(\mathbb{R}^{r},\langle\cdot,\cdot\rangle_{2}) \ar[from=r, "\rtA"]
		& (\mathbb{R}^{m},\langle\cdot,\cdot\rangle_{2}) \ ,
	\end{tikzcd}
\end{equation}
where $\rtA$ is the matrix representation of $\calA^{*}$ under bases $W_r$ and $Z$. The following result gives the matrix-forms of $\rA$ and $\rtA$.

\begin{lemma}\label{lem:matrix_op}
	The matrix representations of $\calA$ and $\calA^{*}$ under bases $W_r$ and $Z$ are
	\begin{equation}
		\rA = Z^{-1}AW_{r}, \ \ \ 
		\rtA = W_{r}^{\top}A^{\top}GZ .
	\end{equation}
\end{lemma}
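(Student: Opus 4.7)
The plan is to work directly from the definitions of matrix representation in each Hilbert space and the characterizing property of the adjoint. The only non-standard feature is that the inner products are weighted, and that the domain basis $W_r$ spans the proper subspace $\calR(M) \subset \mathbb{R}^n$ rather than all of $\mathbb{R}^n$; once these are handled carefully, the two formulas fall out by a direct computation.

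For $\rA$, the $j$-th column is obtained by applying $\calA$ to the $j$-th basis vector $W_r e_j$ and expressing the image in the basis $Z$ of $(\mathbb{R}^{m},\langle\cdot,\cdot\rangle_{G})$. By definition $\calA(W_r e_j) = A W_r e_j \in \mathbb{R}^m$, and since $Z^{\top}GZ = I_m$ with $G$ positive definite forces $Z$ to be invertible, the coordinate vector of $AW_r e_j$ in the basis $Z$ is $Z^{-1}A W_r e_j$. Concatenating columns for $j=1,\dots,r$ yields $\rA = Z^{-1} A W_r$.

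For $\rtA$, I would exploit the defining identity $\langle \calA v, u\rangle_G = \langle \calA^* u, v\rangle_M$. Fix $u = Z e_j$ (the $j$-th basis vector of $Z$), and let $\beta_{\cdot j} \in \mathbb{R}^r$ denote the coordinates of $\calA^* u \in \calR(M)$ in the basis $W_r$, so that $\calA^* u = W_r \beta_{\cdot j}$. Taking $v = W_r x$ with $x \in \mathbb{R}^r$ arbitrary and expanding both sides gives
\begin{equation*}
  x^{\top} W_r^{\top} A^{\top} G Z e_j \;=\; x^{\top} W_r^{\top} M W_r \beta_{\cdot j} \;=\; x^{\top}\beta_{\cdot j},
\end{equation*}
where the last equality uses $W_r^{\top} M W_r = I_r$. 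Since $x$ is arbitrary, $\beta_{\cdot j} = W_r^{\top} A^{\top} G Z e_j$, and concatenating the columns gives $\rtA = W_r^{\top}A^{\top}G Z$.

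There is no real obstacle, but the subtle point worth flagging is that $\calA^* u$ lives in $\calR(M)$, not in $\mathbb{R}^n$, so the coordinates with respect to $W_r$ are uniquely defined and the argument above is self-consistent. As a sanity check one can verify the orthonormal-basis principle that the adjoint matrix is the transpose: using $Z^{\top} G Z = I_m$ to get $(Z^{\top})^{-1} = G Z$, one finds $\rA^{\top} = W_r^{\top}A^{\top}(Z^{\top})^{-1} = W_r^{\top}A^{\top}GZ = \rtA$, consistent with $W_r$ and $Z$ being orthonormal in their respective inner products.
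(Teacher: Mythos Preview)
Your proof is correct and follows essentially the same approach as the paper: both derive $\rA$ from the change-of-basis relation $AW_r = Z\rA$, and both obtain $\rtA$ by pairing the adjoint identity $\langle \calA v,u\rangle_G=\langle \calA^{*}u,v\rangle_M$ with the orthonormality $W_r^{\top}MW_r=I_r$. Your column-by-column presentation is slightly more explicit than the paper's commutative-diagram phrasing, and the closing sanity check $\rA^{\top}=\rtA$ is a nice addition not present in the paper.
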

\begin{proof}
	By the commutative diagram \eqref{commu_dig1}, we have 
	$
		\calA\circ \pi_{1}(x) = \pi_{2}\circ\rA(x)
	$
	for any $x\in\mathbb{R}^{r}$, which is equivalent to
	$
		AW_{r}x = Z\rA x
	$
	Thus, we have $AW_{r} = Z\rA$, leading to $\rA = Z^{-1}AW_{r}$. Similarly, by the commutative diagram \eqref{commu_dig2}, we have 
	\begin{equation*}
		\calA^{*}\circ \pi_{2}(y) = \pi_{1}\circ\rtA(y) \ \ \Leftrightarrow \ \ 
		\calA^{*}Zy = W_{r}\rtA y
	\end{equation*}
	for any $y\in\mathbb{R}^{m}$, which leads to
	\begin{equation}\label{mat_At}
		\calA^{*}Z = W_{r}\rtA .
	\end{equation}
	From the definition of $\calA^{*}$, we have
	\[
		\langle \calA\circ\pi_{1}(x), \pi_{2}(y) \rangle_{G} = \langle \pi_{1}(x), \calA^{*}\circ\pi_{2}(y) \rangle_{M}
	\]
	for any $x\in\mathbb{R}^{r}$ and $y\in\mathbb{R}^{m}$, which can also be written as 
	\begin{equation*}
		(AW_{r}x)^{\top}GZy = (W_{r}x)^{\top}M(\calA^{*}Zy) \ \  \Leftrightarrow \ \ 
		x^{\top}W_{r}^{\top}A^{\top}GZy = x^{\top}W_{r}^{\top}M\calA^{*}Zy .
	\end{equation*}
	Thus, we have
	\begin{equation}\label{relation_At}
		W_{r}^{\top}A^{\top}GZ = W_{r}^{\top}M\calA^{*}Z
	\end{equation}
	Combining \eqref{mat_At} and \eqref{relation_At} and using $W_{r}^{\top}MW_{r}=I_{r}$, we finally obtain
	\begin{equation*}
		\rtA = W_{r}^{\top}MW_{r}\rtA = W_{r}^{\top}M\calA^{*}Z = W_{r}^{\top}A^{\top}GZ .
	\end{equation*}
	The proof is completed.
\end{proof}

The following result will be used throughout the paper.
\begin{lemma}\label{lem:pseudo}
	If $\calR(W_r)=\calR(M)$ and $W_{r}^{\top}MW_{r}=I_r$, then the Moor-Penrose pseudoinverse of $M$ can be expressed as 
	\[
		M^{\dag} = W_{r}W_{r}^{\top} . 
	\]
\end{lemma}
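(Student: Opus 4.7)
The plan is to verify the identity by reducing to the spectral decomposition of $M$, where the Moore-Penrose pseudoinverse has a transparent formula. First I would diagonalize the symmetric positive semidefinite matrix $M$ as $M = U_r \Lambda_r U_r^\top$, where $U_r \in \mathbb{R}^{n\times r}$ has orthonormal columns spanning $\calR(M)$ and $\Lambda_r \in \mathbb{R}^{r\times r}$ is diagonal with the positive eigenvalues of $M$ on its diagonal. From this it is standard that $M^\dag = U_r \Lambda_r^{-1} U_r^\top$, so the claim reduces to showing $W_r W_r^\top = U_r \Lambda_r^{-1} U_r^\top$.

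Next I would exploit the hypothesis $\calR(W_r) = \calR(M) = \calR(U_r)$. Since $W_r$ and $U_r$ both have $r$ columns and share the same column space, there exists a unique invertible matrix $T \in \mathbb{R}^{r\times r}$ with $W_r = U_r T$. Substituting into the $M$-orthonormality condition gives
\begin{equation*}
    I_r = W_r^\top M W_r = T^\top U_r^\top (U_r \Lambda_r U_r^\top) U_r T = T^\top \Lambda_r T ,
\end{equation*}
from which one reads off $T T^\top = \Lambda_r^{-1}$ (by inverting both sides and using that $T$ is invertible and $\Lambda_r$ diagonal positive).

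Finally, I would plug this back in:
\begin{equation*}
    W_r W_r^\top = U_r T T^\top U_r^\top = U_r \Lambda_r^{-1} U_r^\top = M^\dag ,
\end{equation*}
which is the desired identity. I do not anticipate a real obstacle here: the only mildly nontrivial step is the algebraic manipulation $T^\top \Lambda_r T = I_r \Rightarrow T T^\top = \Lambda_r^{-1}$, and if one prefers to avoid even that, one can instead verify the four Moore-Penrose axioms for $W_r W_r^\top$ directly, using $W_r^\top M W_r = I_r$ and the fact that $M W_r W_r^\top$ acts as the identity on $\calR(M)$ and as zero on $\calN(M) = \calR(M)^\perp$.
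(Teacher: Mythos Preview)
Your proof is correct and follows essentially the same route as the paper: both arguments diagonalize $M$ via its compact spectral decomposition, write $W_r$ as the eigenvector matrix times an invertible $r\times r$ matrix, and use the $M$-orthonormality condition to extract the relation $TT^{\top}=\Lambda_r^{-1}$ (the paper writes it as $\Lambda_r DD^{\top}=I_r$). The only cosmetic difference is that the paper verifies the four Moore--Penrose axioms for $W_rW_r^{\top}$, whereas you compare directly with the known formula $M^{\dag}=U_r\Lambda_r^{-1}U_r^{\top}$; your version is slightly more streamlined, and you even mention the axiom-checking alternative at the end.
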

\begin{proof}
	Let $\bar{M}=W_{r}W_{r}^{\top}$. We only need to verify the following four identities:
	\begin{align*}
		& M\bar{M}M = M, \ \ \  (M\bar{M})^{\top} = M\bar{M}, \\
		& \bar{M}M\bar{M} = \bar{M}, \ \ \ (\bar{M}M)^{\top} = \bar{M}M .
	\end{align*}
	The third identity is the easiest to verify: $\bar{M}M\bar{M}=W_{r}W_{r}^{\top}MW_{r}W_{r}^{\top}=W_{r}W_{r}^{\top}=\bar{M}$.
	Suppose the compact-form eigenvalue decomposition of $M$ is
	\[
		M = P_{r}\Lambda_{r}P_{r}^{\top} , \ \ \ 
		\Lambda_{r} = \diag(\lambda_{1},\dots,\lambda_{r}) ,
	\]
	where $\lambda_{1}\geq\cdots\geq\lambda_{r}>0$ and $P_{r}\in\mathbb{R}^{n\times r}$ with $2$-orthonormal columns. Since $\calR(W_r)=\calR(M)=\calR(P_r)$, there exist $D\in\mathbb{R}^{r\times r}$ such that $W_{r}=P_{r}D$. It follows that 
	$
		I_{r}=W_{r}^{\top}MW_{r}=D^{\top}\Lambda_{r}D .
	$
	Therefore, it follows that
	\[
		M\bar{M}M  = MP_{r}DD^{\top}P_{r}^{\top}M = P_{r}\Lambda_{r}DD^{\top}\Lambda_{r}P_{r}^{\top} 
		= P_{r}\Lambda_{r}P_{r}^{\top} = M,
	\]
	since $\Lambda_{r}DD^{\top}=D^{\top}\Lambda_{r}D=I_{r}$. Similarly, we have
	\begin{align*}
		& M\bar{M} = P_{r}\Lambda_{r}P_{r}^{\top}P_{r}DD^{\top}P_{r}^{\top} 
		= P_{r}\Lambda_{r}DD^{\top}P_{r}^{\top} = P_{r}P_{r}^{\top} = (M\bar{M})^{\top}, \\
		& \bar{M}M = P_{r}DD^{\top}P_{r}^{\top}M
		= P_{r}DD^{\top}\Lambda_{r}P_{r}^{\top} 
		= P_{r}P_{r}^{\top} = (\bar{M}M)^{\top} . 
	\end{align*}
	Now all the four identities have been verified.
\end{proof}

%------------------------------------------------------------------
\subsection{Characterizing GSVD by singular value expansion}\label{sec3.2}
In this subsection, we consider the simpler case that $G=I_{m}$, since it has direct connections with the GSVD of a matrix pair. For notational simplicity, let $\mathcal{X}=(\calR(M),\langle\cdot,\cdot\rangle_{M})$ and $\mathcal{Y}=(\mathbb{R}^{m},\langle\cdot,\cdot\rangle_{2})$. For the linear compact operator 
\begin{equation}\label{linear_op}
	\calA: \calX \rightarrow \calY, \ \ \ 
	v \mapsto Av
\end{equation}
between the two Hilbert spaces $\calX$ and $\calY$, where $v$ and $Av$ are column vectors under the canonical bases of $\mathbb{R}^{n}$ and $\mathbb{R}^{m}$, it has the singular value expansion (SVE) with finite terms; see e.g. \cite[\S 15.4]{Kress2014}. Here we use the terminology ``SVE" instead of ``SVD" to distinguish it from the SVD of a matrix. The theory of SVE for $\calA$ states that there exist positive scalars $\sigma_{1}\geq\cdots\geq\sigma_{d}>0$, two orthonormal systems $\{f_{i}\}_{i=1}^{d}\subseteq \calX$ and $\{h_{i}\}_{i=1}^{d}\subseteq \calY$ such that
\begin{equation}\label{SVE1}
	\calA f_{i} = \sigma_{i}h_i, \ \ \ 
	\calA^{*}h_{i} = \sigma_{i}f_{i} ,
\end{equation}
and any $v\in\calX$ has the expansion
\begin{equation}\label{SVE2}
	v = v_{0} + \sum_{i=1}^{d}\langle v, f_{i} \rangle_{M}f_{i}
\end{equation}
with some $v_{0}\in\mathrm{ker}(\calA)$, and
\begin{equation}\label{SVE3}
	\calA v = \sum_{i=1}^{d}\sigma_{i}\langle v, f_{i} \rangle_{M}h_{i},
\end{equation}
where $d=\mathrm{dim}(\mathrm{im}(\calA))$. Here we use $\mathrm{ker}(\cdot)$ and $\mathrm{im}(\cdot)$ to denote the kernel and image of a linear operator, respectively, to distinguish them from the null space $\calN(\cdot)$ and range space $\calR(\cdot)$ of a matrix. 

The following result provides more details about the SVE of $\calA$.
\begin{theorem}\label{thm:sve}
	For any $A\in\mathbb{R}^{m\times n}$ and symmetric positive semidefinite matrix $M\in\mathbb{R}^{n\times n}$ with rank $r$, define the linear operator $\calA$ as \eqref{linear_op}. Then there exist an $M$-orthonormal matrix $F\in\mathbb{R}^{n\times r}$, a $2$-orthonormal matrix $H\in\mathbb{R}^{m\times m}$ and a diagonal matrix $\Sigma\in\mathbb{R}^{m\times r}$, such that for any $v\in\calX$ and $u\in\calY$, it holds that
	\begin{equation}\label{SVE4}
		\calA v = H\Sigma F^{\top}Mv, \ \ \ 
		\calA^{*} u = F\Sigma^{\top} H^{\top}u 
	\end{equation}
	under the canonical bases of $\mathbb{R}^{n}$ and $\mathbb{R}^{m}$.
\end{theorem}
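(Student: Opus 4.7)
The plan is to reduce the operator statement to the standard matrix SVD by exploiting the commutative diagrams \eqref{commu_dig1} and \eqref{commu_dig2} together with \Cref{lem:matrix_op}. Since here $G = I_m$, I would take the trivial orthonormal basis $Z = I_m$ for $(\mathbb{R}^m,\langle\cdot,\cdot\rangle_2)$, and any $M$-orthonormal basis $W_r \in \mathbb{R}^{n\times r}$ for $\calX = (\calR(M),\langle\cdot,\cdot\rangle_M)$ (which exists because $\calX$ is a finite-dimensional Hilbert space of dimension $r$). By \Cref{lem:matrix_op} the matrix representations become $\rA = AW_r$ and $\rtA = W_r^\top A^\top$, so the whole problem reduces to computing the SVD of the $m\times r$ matrix $AW_r$ and pulling the resulting factors back through the isomorphisms $\pi_1,\pi_2$.

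Concretely, I would let $AW_r = \hat{H}\,\Sigma\,\hat{F}^\top$ be a standard SVD with $\hat{H}\in\mathbb{R}^{m\times m}$ and $\hat{F}\in\mathbb{R}^{r\times r}$ orthogonal and $\Sigma\in\mathbb{R}^{m\times r}$ diagonal, and then define
\[
F := W_r \hat{F} \in \mathbb{R}^{n\times r}, \qquad H := \hat{H} \in \mathbb{R}^{m\times m}.
\]
The $M$-orthonormality of $F$ is immediate: $F^\top M F = \hat{F}^\top (W_r^\top M W_r)\hat{F} = I_r$. To verify the first identity in \eqref{SVE4}, for $v\in\calR(M)$ I would write $v = W_r x$ with coordinate vector $x = W_r^\top M v$; this is the inverse of $\pi_1$ on its image, which follows from $W_r W_r^\top M = M^\dagger M = \calP_{\calR(M)}$ via \Cref{lem:pseudo}. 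Then
\[
\calA v = A W_r x = \hat{H}\,\Sigma\,\hat{F}^\top x = H\,\Sigma\,F^\top M v.
\]
For the adjoint, the coordinates of $\calA^* u \in \calR(M)$ relative to $W_r$ are precisely $\rtA u = W_r^\top A^\top u$, and substituting the SVD of $AW_r$ gives $W_r^\top A^\top = \hat{F}\,\Sigma^\top \hat{H}^\top$, whence $\calA^* u = W_r\hat{F}\,\Sigma^\top \hat{H}^\top u = F\,\Sigma^\top H^\top u$.

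The only delicate point is the bookkeeping imposed by the non-Euclidean inner product on $\calX$: the coordinate map of $v\in\calR(M)$ with respect to the basis $W_r$ is $v \mapsto W_r^\top M v$, not $W_r^\top v$, which is why the factor $M$ appears in the formula for $\calA v$ but not in the formula for $\calA^* u$ (whose target space $\mathbb{R}^m$ already carries the Euclidean inner product). Once this identification is made explicit through \Cref{lem:pseudo}, the rest of the argument is simply unpacking the standard SVD of $AW_r$, and there is no substantive obstacle beyond checking these identifications carefully.
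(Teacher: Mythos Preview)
Your proof is correct, but it takes a genuinely different route from the paper's. The paper invokes the abstract SVE theory for compact operators (equations \eqref{SVE1}--\eqref{SVE3}) to obtain the singular system $\{\sigma_i,f_i,h_i\}_{i=1}^{d}$, then argues that $\ker(\calA)\perp_M \mathrm{span}\{f_i\}$ and $\ker(\calA^*)\perp \mathrm{span}\{h_i\}$, and extends each partial system to a full orthonormal basis of $\calX$ and $\calY$ respectively by adjoining orthonormal vectors from the kernels; the matrix form \eqref{SVE4} then drops out of the resulting expansions. Your argument bypasses the abstract SVE entirely: you pass to coordinates via $W_r$ and $Z=I_m$, compute the ordinary matrix SVD of $AW_r\in\mathbb{R}^{m\times r}$, and pull the factors back through $\pi_1$, using \Cref{lem:pseudo} to identify the coordinate map $v\mapsto W_r^\top Mv$.

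Your route is shorter and more constructive, and it makes the appearance of the factor $M$ in $\calA v = H\Sigma F^\top Mv$ transparent as the coordinate map on $\calX$. The paper's route, on the other hand, keeps the operator-theoretic viewpoint in the foreground and yields the subspace identifications \eqref{basic_space} directly as a byproduct of the basis-extension step; these identifications are used immediately afterward in \Cref{thm:sve_gsvd1} and \Cref{thm:sve_gsvd2}. In your framework the same relations follow from the block structure of the SVD of $AW_r$, but you would need to say a word about why $\calR(F) \subseteq \calR(M)$ (which you have, since $F = W_r\hat F$) so that the columns of $F$ genuinely live in $\calX$.
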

\begin{proof}
	Let $\calX_{1}=\mathrm{span}\{f_{i}\}_{i=1}^{d}$. We first prove $\calX=\mathrm{ker}(\calA)\oplus \calX_{1}$. Noticing \eqref{SVE2}, we only need to prove $\mathrm{ker}(\calA)\cap \calX_{1}=\{\mathbf{0}\}$. Let $v = \sum_{j=1}^{d}\mu_{j}f_{j}\in\mathrm{ker}(\calA)\cap \calX_{1}$. By \eqref{SVE3}, it follows $\mathbf{0}=\calA v=\sum_{j=1}^{n}\sigma_i\mu_{i}h_{i}$, leading to $\sigma_i\mu_i=0$ for $1\leq i\leq d$. Since $\sigma_{i}>0$, we have $\mu_{i}=0$ for $1\leq i\leq d$, thereby $v=\mathbf{0}$. Then we prove $\mathrm{ker}(\calA)\perp_{M}\calX_{1}$, where $\perp_{M}$ is the orthogonal relation in $\calX$. For any $v\in\mathrm{ker}(\calA)$ and any $f_i$, by \eqref{SVE1} we have $f_{i}=\sigma_{i}^{-1}\calA^{*}h_{i}$, thereby
	\begin{equation*}
		\langle v, f_{i}\rangle_{M} = \langle v, \sigma_{i}^{-1}\calA^{*}h_{i}\rangle_{M} =
		\langle \calA v, \sigma_{i}^{-1}h_{i}\rangle_{2} = \langle \mathbf{0}, \sigma_{i}^{-1}h_{i}\rangle_{2} = 0 .
	\end{equation*}
	Note $\mathrm{dim}(\calX_1)=d$. Therefore, we can find $r-d$ $M$-orthonormal vectors in $\mathrm{ker}(\calA)$ that are $M$-orthogonal to each $f_i$. Denote these vectors by $\{f_{d+1},\dots,f_{r}\}$. Then $\{f_{i}\}_{i=1}^{r}$ constitute a complete orthonormal basis for $\calX$.
	From \eqref{SVE3} we have $\mathrm{im}(\calA)=\mathrm{span}\{h_{i}\}_{i=1}^{d}=:\calY_1$. Using the relation $\mathrm{ker}(\calA^{*})=\mathrm{im}(\calA)^{\perp}=\calY_{1}^{\perp}$, where the orthogonality is taken in $(\mathbb{R}^{m},\langle\cdot,\cdot\rangle_{2})$, there exist $m-d$ 2-orthonormal $\{h_{d+1},\dots,h_{m}\}\subseteq \mathrm{ker}(\calA^{*})$ such that $\{h_{i}\}_{i=1}^{m}$ constitute a complete orthonormal basis for $\calY$.
	
	Therefore, for any $v\in\calX$, it can be written as $v=\sum_{i=1}^{r}\langle v, f_{i}\rangle_{M}f_{i}$, and thereby
	\begin{equation*}
		\calA v=\sum_{i=1}^{r}\langle v, f_{i}\rangle_{M}\calA f_{i} = \sum_{i=1}^{r}\sigma_{i}\langle v, f_{i} \rangle_{M}h_{i} ,
	\end{equation*}
	where we define $\sigma_{d+1}=\cdots=\sigma_{r}=0$. Similarly, for any $u\in\calY$ with the expansion $u=\sum_{i=1}^{m}\langle u, h_{i}\rangle_{2}h_{i}$, it holds
	\begin{equation*}
		\calA^{*} u = \sum_{i=1}^{m}\langle u, h_{i}\rangle_{M}\calA^{*}h_{i} = \sum_{i=1}^{r}\sigma_{i}\langle u, h_{i} \rangle_{2}f_{i}.
	\end{equation*}
	Let the matrices $F=(f_{1},\dots,f_{r})$, $H=(h_{1},\dots,h_{m})$ and $\Sigma=\begin{pmatrix}
		\Sigma_{d} & \\ & \mathbf{0}\end{pmatrix}\in\mathbb{R}^{m\times r}$ with $\Sigma_{d}=\diag(\sigma_{1},\dots,\sigma_{d})$. Then \eqref{SVE4} is just the matrix-form of the above two identities.
\end{proof}

One can verify that \eqref{SVE1}, \eqref{SVE2} and \eqref{SVE3} can be derived from \eqref{SVE4}. Therefore, the two relations in \eqref{SVE4} describe completely the SVE of $\calA$. In the following part, we use the notation 
\begin{equation}
	\calA \sim H\Sigma F^{\top}
\end{equation}
to denote the SVE of $\calA$. From the proof of \Cref{thm:sve}, we have the  following basic relations for the four important subspaces:
\begin{subequations}\label{basic_space}
	\begin{align}
		& \mathrm{ker}(\calA) = \mathrm{span}\{f_{i}\}_{i=d+1}^{r}, \ \ \ \ \ 
		\mathrm{im}(\calA) = \mathrm{span}\{h_{i}\}_{i=1}^{d}, \\
		& \mathrm{ker}(\calA^{*}) = \mathrm{span}\{h_{i}\}_{i=d+1}^{m}, \ \ \ 
		\mathrm{im}(\calA^{*}) = \mathrm{span}\{f_{i}\}_{i=1}^{d} .
	\end{align}
\end{subequations}
From the theory of SVE for linear compact operators, if the multiplicity of $\sigma_{i}$ is $1$, then the corresponding $f_i$ and $h_i$ are uniquely determined (at most differing by a sign). If the multiplicity of $\sigma_{i}$ is $m_i>1$, then there are $m_i$ corresponding linear independent $\{f_i\}$ and $\{h_i\}$, respectively, which are $M$-orthonormal and $2$-orthonormal.

Based on \Cref{thm:sve}, now we can use the SVE of $\calA$ to characterize the GSVD of $\{A, L\}$. Remember that we consider those $x_i \in \calR(M)$ for $1\leq i\leq r$.
\begin{theorem}\label{thm:sve_gsvd1}
	Let the GSVD of $\{A,L\}$ be \eqref{GSVD1} and let $\calA$ be defined as \eqref{linear_op} with $M=A^{\top}A+L^{\top}L$. Partition $P_A$ and $X$  as
	\begin{equation}
		P_{A} =
		\bordermatrix*[()]{%
			P_{A1} & P_{A2} & P_{A3} & m \cr
			q_1 & q_2 & m-q_1-q_2   \cr
		} \ , \ \ \
		X =
		\bordermatrix*[()]{%
			X_{1} & X_{2} & X_{3} & X_{4} & n \cr
			q_1 & q_2 & q_3 & n-r \cr
		} 
	\end{equation}
	and let $\widetilde{X}_{1}=(X_{1} \ X_{2} \ X_{3})$. Then the SVE of $\calA$ has the form
	\begin{equation}
		\calA \sim P_{A}\Sigma_{A}\widetilde{X}_{1}^{\top} ,
	\end{equation}
	and it holds that
	\begin{subequations}\label{q11}
		\begin{align}
			& \mathrm{ker}(\calA) = \calR(X_{3}), \ \ \ \ \ \ \ \ \ \ 
			\mathrm{im}(\calA) = \calR((P_{A1}\ P_{A2})),  \\
			& \mathrm{ker}(\calA^{*}) = \calR(P_{A3}), \ \ \ \ \ \ \  
			\mathrm{im}(\calA^{*}) = \calR((X_{1}\ X_{2})) .
		\end{align}
	\end{subequations}
\end{theorem}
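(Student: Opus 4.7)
My plan is to apply Theorem~\ref{thm:sve} directly, with the identifications $F=\widetilde{X}_1$, $H=P_A$, and $\Sigma=\Sigma_A$. The work then splits into two parts: first, check that these choices satisfy the two SVE identities in \eqref{SVE4}; second, read off \eqref{q11} from the general subspace relations \eqref{basic_space} by matching block widths. Because $P_A$ is already $2$-orthonormal by the GSVD, the only nontrivial prerequisite is that $\widetilde{X}_1$ furnishes an $M$-orthonormal basis of $\calR(M)$.

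\textbf{First I would verify that $\widetilde{X}_1$ is $M$-orthonormal with $\calR(\widetilde{X}_1)=\calR(M)$.} The assumption \eqref{new_xi} already yields $\calR(\widetilde{X}_1)\subseteq \calR(M)$; combined with $\dim\calR(M)=r$ and the linear independence of the first $r$ columns of the invertible $X$, this forces $\calR(\widetilde{X}_1)=\calR(M)$. For the $M$-orthonormality, I would use the block structure in \eqref{gsvd3} together with $\Sigma_A^\top\Sigma_A+\Sigma_L^\top\Sigma_L=I_r$ to get $C_A^\top C_A+S_L^\top S_L=\mathrm{diag}(I_r,\mathbf{0})\in\mathbb{R}^{n\times n}$, and then substitute \eqref{gsvd1} into $M=A^\top A+L^\top L$ to obtain
\[
M = X^{-\top}\mathrm{diag}(I_r,\mathbf{0})X^{-1}.
\]
Since $X^{-1}\widetilde{X}_1$ consists of the first $r$ columns of $I_n$, $\widetilde{X}_1^\top M\widetilde{X}_1=I_r$ follows by direct multiplication.

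\textbf{Verifying the SVE identities and the subspace relations.} For any $v\in\calR(M)$ write $v=\widetilde{X}_1 c$; then $\widetilde{X}_1^\top Mv=c$, and the identity $A\widetilde{X}_1=P_A\Sigma_A$ (a consequence of $A=P_AC_AX^{-1}$ and the fact that $X^{-1}\widetilde{X}_1$ is the first $r$ columns of $I_n$) gives $\calA v=P_A\Sigma_A\widetilde{X}_1^\top Mv$, which is the first identity in \eqref{SVE4}. For the adjoint, setting $Tu:=\widetilde{X}_1\Sigma_A^\top P_A^\top u$ and computing $\langle Tu,v\rangle_M = u^\top P_A\Sigma_A\widetilde{X}_1^\top Mv = u^\top Av = \langle\calA v,u\rangle_2$ shows $\calA^*=T$ by the defining property of the adjoint. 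Once these two formulas hold, Theorem~\ref{thm:sve} identifies the number of positive singular values of $\calA$ as $d=q_1+q_2$, and then \eqref{basic_space} combined with the partitions $\widetilde{X}_1=(X_1\ X_2\ X_3)$ and $P_A=(P_{A1}\ P_{A2}\ P_{A3})$ directly yields the four identities in \eqref{q11}.

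\textbf{Main obstacle.} None of this computation is deep; the hard part is purely bookkeeping. I expect the principal difficulty to lie in translating the rather elaborate block structure of $C_A$ and $S_L$ in \eqref{gsvd3} into the compact identity $M=X^{-\top}\mathrm{diag}(I_r,\mathbf{0})X^{-1}$, and aligning the three column blocks of $\widetilde{X}_1$ and $P_A$ with the SVE indexing so that the locations of the identity, diagonal, and zero parts of $\Sigma_A$ correspond correctly to $q_1$, $q_2$, and $q_3$.
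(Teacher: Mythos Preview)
Your proposal is correct and follows essentially the same route as the paper: establish $M=X^{-\top}\mathrm{diag}(I_r,\mathbf{0})X^{-1}$, deduce that $\widetilde{X}_1$ is an $M$-orthonormal basis of $\calR(M)$, use $A\widetilde{X}_1=P_A\Sigma_A$ to obtain the SVE formulas, and then read off the four subspace identities from \eqref{basic_space}. The only noteworthy difference is that the paper routes the computation of $\calA^{*}$ through Lemma~\ref{lem:matrix_op} and Lemma~\ref{lem:pseudo} (writing $\calA^{*}u=M^{\dag}A^{\top}u$ with $M^{\dag}=\widetilde{X}_1\widetilde{X}_1^{\top}$), whereas you verify the adjoint identity directly by pairing $\langle Tu,v\rangle_M=\langle \calA v,u\rangle_2$; your argument is slightly more self-contained but otherwise equivalent.
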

\begin{proof}
	Using the GSVD of $\{A,L\}$, we have 
	\begin{align*}
		A^{\top}A + L^{\top}L
		= X^{-\top}\left(\begin{pmatrix}
			\Sigma_{A}^{\top}\Sigma_{A} &  \\
			& \mathbf{0}
		\end{pmatrix} + \begin{pmatrix}
			\Sigma_{L}^{\top}\Sigma_{L} &  \\
			& \mathbf{0}
		\end{pmatrix}\right)X^{-1}
		= X^{-\top}\begin{pmatrix}
			I_{r} &  \\
			& \mathbf{0}
		\end{pmatrix}X^{-1},
	\end{align*}
	which leads to $\mathrm{rank}(M)=r$ and
	\begin{align*}
		\begin{pmatrix}
			I_{r} &  \\
			& \mathbf{0}
		\end{pmatrix} = 
		\begin{pmatrix}
			\widetilde{X}_{1}^{\top} \\ X_{4}^{\top}
		\end{pmatrix} M 
		\begin{pmatrix}
			\widetilde{X}_{1} & X_{4}
		\end{pmatrix} = 
		\begin{pmatrix}
			\widetilde{X}_{1}^{\top}M\widetilde{X}_{1} & \widetilde{X}_{1}^{\top}MX_{4} \\
			X_{4}^{\top}M\widetilde{X}_{1} & X_{4}^{\top}MX_{4}
		\end{pmatrix} .
	\end{align*}
	Therefore, we have $\widetilde{X}_{1}^{\top}M\widetilde{X}_{1}=I_r$. Note that $\calR(\widetilde{X}_1)\subseteq\calR(M)$. It follows that $\widetilde{X}_1$ is an $M$-orthonormal basis of $(\calR(M),\langle\cdot,\cdot\rangle_{M})$, thereby we obtain from \eqref{lem:pseudo} that $M^{\dag}=\widetilde{X}_{1}\widetilde{X}_{1}^{\top}$. Notice from \eqref{GSVD1} that 
	\[A(\widetilde{X}_1\ X_{4}) = P_{A}(\Sigma_{A}\ \mathbf{0}) \Rightarrow A\widetilde{X}_1 = P_{A}\Sigma_{A} .\]
	Thus, we have $AM^{\dag} = A\widetilde{X}_{1}\widetilde{X}_{1}^{\top} = P_{A}\Sigma_{A}\widetilde{X}_{1}^{\top}$.
	For any $v\in (\calR(M),\langle\cdot,\cdot\rangle_{M})$, it holds
	\begin{align*}
		\calA v = \calA\calP_{\calR(M)}v = AM^{\dag}Mv = P_{A}\Sigma_{A}\widetilde{X}_{1}^{\top}Mv .
	\end{align*}
	Using the commutative diagram \eqref{commu_dig2} and noticing $G=Z=I_{m}$ for the current case, we have for any $u\in (\mathbb{R}^{m},\langle\cdot,\cdot\rangle_{2})$ that 
	\begin{equation}\label{adjoint}
		\calA^{*}u = \pi_{1}\circ\rtA(u) = \widetilde{X}_{1}(\widetilde{X}_{1}^{\top}A^{\top})u = M^{\dag}A^{\top}u 
		= (AM^{\dag})^{\top}u = \widetilde{X}_{1}\Sigma_{A}^{\top}P_{A}^{\top}u ,
	\end{equation}
	where we have used $\rtA = \widetilde{X}_{1}^{\top}A^{\top}$ by \Cref{lem:matrix_op}. This proves that the SVE of $\calA$ has the form $P_{A}\Sigma_{A}\widetilde{X}_{1}^{\top}$. 
	
	From the SVE of $\calA$ we have $\mathrm{dim}(\mathrm{im}(\calA))=q_1+q_2$. Using the relations \eqref{basic_space}, it follows that $\mathrm{im}(\calA) = \calR((P_{A1}\ P_{A2}))$. Since $P_{A}$ is a 2-orthogonal matrix, we then have $\mathrm{ker}(\calA^{*}) = \calR(P_{A3})$. The other two relations can also be verified easily.
\end{proof}

Corresponding to \Cref{thm:sve_gsvd1}, we have the following result.
\begin{theorem}\label{thm:sve_gsvd2}
	Define $\calL$ as
	\begin{equation}
		\calL: (\calR(M),\langle\cdot,\cdot\rangle_{M}) \rightarrow (\mathbb{R}^{p},\langle\cdot,\cdot\rangle_{2}), \ \ \ 
		v \mapsto Lv,
	\end{equation}
	where $v$ and $Lv$ are column vectors under the canonical bases of $\mathbb{R}^{n}$ and $\mathbb{R}^{p}$. Partition $P_{L}$ as 
	\begin{equation}
		P_{L} =
		\bordermatrix*[()]{%
			P_{L1} & P_{L2} & P_{L3} & p \cr
			p-q_2-q_3 & q_2 & q_3  \cr
		} .
	\end{equation}
	Then the SVE of $\calL$ has the form
	\begin{equation}
		\calL \sim P_{L}\Sigma_{L}\widetilde{X}_{1}^{\top},
	\end{equation}
	and it holds that
	\begin{subequations}\label{q22}
		\begin{align}
			& \mathrm{ker}(\calL) = \calR(X_{1}), \ \ \ \ \ \ \ \ \ \ 
			\mathrm{im}(\calL) = \calR((P_{L2}\ P_{L3})),  \\
			& \mathrm{ker}(\calL^{*}) = \calR(P_{L1}), \ \ \ \ \ \ \ \ 
			\mathrm{im}(\calL^{*}) = \calR((X_{2}\ X_{3})) .
		\end{align}
	\end{subequations}
\end{theorem}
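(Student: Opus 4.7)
The plan is to mirror the proof of \Cref{thm:sve_gsvd1} almost verbatim, swapping $A$ for $L$ and $P_A, \Sigma_A$ for $P_L, \Sigma_L$, while being careful about the different block layout of $\Sigma_L$ (the zero block sits at the top-left rather than the bottom-right). The key algebraic facts established in the proof of \Cref{thm:sve_gsvd1}, namely $\mathrm{rank}(M)=r$, $\widetilde{X}_1^{\top}M\widetilde{X}_1=I_r$ and $M^{\dag}=\widetilde{X}_1\widetilde{X}_1^{\top}$ (via \Cref{lem:pseudo}), depend only on the GSVD of $\{A,L\}$ and carry over without change, so I would cite them rather than reprove them.

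First I would read off from \eqref{GSVD1} that $L(\widetilde{X}_1\ X_4)=P_L(\Sigma_L\ \mathbf{0})$, giving $L\widetilde{X}_1=P_L\Sigma_L$, and hence $LM^{\dag}=L\widetilde{X}_1\widetilde{X}_1^{\top}=P_L\Sigma_L\widetilde{X}_1^{\top}$. Then for any $v\in(\calR(M),\langle\cdot,\cdot\rangle_M)$,
\begin{equation*}
	\calL v = L\calP_{\calR(M)}v = LM^{\dag}Mv = P_L\Sigma_L\widetilde{X}_1^{\top}Mv.
\end{equation*}
For the adjoint I would invoke the commutative diagram analogous to \eqref{commu_dig2} with $G=I_p$, $Z=I_p$, and \Cref{lem:matrix_op} applied to $\calL$, which gives $\rtA_{\calL}=\widetilde{X}_1^{\top}L^{\top}$ and therefore $\calL^{*}u=\widetilde{X}_1\widetilde{X}_1^{\top}L^{\top}u=\widetilde{X}_1\Sigma_L^{\top}P_L^{\top}u$. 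Together these two identities fit the template \eqref{SVE4}, so by \Cref{thm:sve} they express the SVE $\calL\sim P_L\Sigma_L\widetilde{X}_1^{\top}$.

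The only genuinely new bookkeeping is \eqref{q22}. Here the main obstacle is that the nonzero ``diagonal'' of $\Sigma_L$ is supported in columns $q_1+1,\dots,r$ of $\widetilde{X}_1$ and in rows $p-r+q_1+1,\dots,p$ of $P_L$, so one must match these index ranges against the partitions $\widetilde{X}_1=(X_1\ X_2\ X_3)$ and $P_L=(P_{L1}\ P_{L2}\ P_{L3})$. A direct block multiplication shows $P_L\Sigma_L=(\mathbf{0}\ P_{L2}S_{q_2}\ P_{L3})$, so the nontrivial singular triples of $\calL$ pair the columns of $(X_2\ X_3)$ with those of $(P_{L2}\ P_{L3})$, while the first $q_1$ columns of $\widetilde{X}_1$ (that is, $X_1$) correspond to zero singular values and $P_{L1}$ is orthogonal to $\mathrm{im}(\calL)$. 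Substituting these identifications into the general relations \eqref{basic_space} then yields the four claimed equalities: $\mathrm{ker}(\calL)=\calR(X_1)$, $\mathrm{im}(\calL^{*})=\calR((X_2\ X_3))$, $\mathrm{im}(\calL)=\calR((P_{L2}\ P_{L3}))$, and $\mathrm{ker}(\calL^{*})=\calR(P_{L1})$, where the last identity uses the $2$-orthogonality of $P_L$.
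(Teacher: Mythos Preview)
Your proposal is correct and matches the paper's intended approach exactly: the paper offers no separate proof for \Cref{thm:sve_gsvd2}, stating only that it is the result ``corresponding to \Cref{thm:sve_gsvd1}'', so the argument is precisely the symbol-for-symbol adaptation you describe. Your handling of the extra bookkeeping for \eqref{q22}---in particular the block computation $P_L\Sigma_L=(\mathbf{0}\ P_{L2}S_{q_2}\ P_{L3})$ reflecting that the zero block of $\Sigma_L$ sits at the top-left---is the right way to reconcile the index shift before invoking \eqref{basic_space}.
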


\Cref{lem:gsvd_new} together with \Cref{thm:sve_gsvd1} and \Cref{thm:sve_gsvd2} characterizes completely the GSVD of $\{A,L\}$ based on the SVEs of linear operators $\calA$ and $\calL$. Particularly, they show that the nontrivial part $\widetilde{X}_1$ is the common right SVE components of $\calA$ and $\calL$, while $P_{A}$ and $P_{L}$ are the left SVE components of $\calA$ and $\calL$, respectively. Moreover, the relations \eqref{q11} and \eqref{q22} use the image spaces and kernel spaces of $\calA, \ \calA^{*}$ and $\calL, \ \calL^{*}$ to describe the structure of each GSVD blocks and give a new explanation of the three numbers $q_1$, $q_2$ and $q_3$ in \eqref{GSVD1}. 

Based on the SVE characterization of GSVD, we can expect to modify those algorithms for large-scale SVD computation for large-scale GSVD computation. To compute nontrivial extreme GSVD components, we generalize the standard GKB process from the viewpoint of linear operators.

%%--------------------------------------------------------------
\section{Generalizing the Golub-Kahan bidiagonalization}\label{sec4}
In this section, the generalization of GKB is quite general without requiring $M=A^{\top}A+L^{\top}L$, and we follow the notations and assumptions in \Cref{sec3.1}. For the linear operator in \eqref{Amap}, the iterative process of GKB can be described as follows; see \cite{caruso2019convergence} for discussions about GKB for linear compact operators. Choosing a nonzero vector $b\in (\mathbb{R}^{m},\langle\cdot,\cdot\rangle_{G})$, the basis recursive relations are 
\begin{equation}\label{GKB_1}
	\begin{cases}
		\beta_1 u_1 = b,  \\
		\alpha_{i}v_i = \calA^{*}u_i -\beta_i v_{i-1},  \\
		\beta_{i+1}u_{i+1} = \calA v_{i} - \alpha_i u_i, 
	\end{cases}
\end{equation}
where $u_{i}\in (\mathbb{R}^{m},\langle\cdot,\cdot\rangle_{G})$ and $v_i\in(\calR(M),\langle\cdot,\cdot\rangle_{M})$, and $\alpha_i$ and $\beta_{i}$ are positive scalars such that $\|v_i\|_{M}=\|u_{i}\|_{G}=1$. Note that $v_{0}:=\mathbf{0}$ for the initial step.

For the purpose of practical computation, we need to derive a matrix-form expression of the recursive relations. Using the isomorphisms $\pi_1$ and $\pi_2$, denote $u_i$ and $v_i$ by $u_i=Zy_{i}$ and $v_i=W_{r}x_i$ with $y_i\in\mathbb{R}^{m}$ and $x_i\in\mathbb{R}^{r}$ for any $i\geq 1$. Then we have
\begin{align*}
	& \calA v_{i} = \calA\circ\pi_{1}(x_{i}) = \pi_{2}\circ\rA x_{i} = Z Z^{-1}AW_{r}x_i = Av_{i}, \\
	&  \calA^{*}u_i = \calA^{*}\circ\pi_{2}(y_{i}) = \pi_{1}\circ\rtA y_{i} = W_{r}W_{r}^{\top}A^{\top}GZy_{i} = M^{\dag}A^{\top}Gu_{i}.
\end{align*}
Therefore, the last two recursions in \eqref{GKB_1} can be written in the matrix-vector forms
\begin{equation}\label{GKB_2}
	\begin{cases}
		\alpha_{i}v_i = M^{\dag}A^{\top}Gu_i -\beta_i v_{i-1}   \\
		\beta_{i+1}u_{i+1} = Av_{i} - \alpha_i u_i .
	\end{cases}
\end{equation}
Using \eqref{GKB_2}, the GKB of $\calA$ can be proceeded step by step. We name the above iterative process the \textit{generalized Golub-Kahan bidiagonalization} (\textsf{gGKB}). The pseudocode of \textsf{gGKB} is shown in \Cref{alg:gGKB}.

\begin{remark1}
	If $G$ is also positive semidefinite, define the linear operator $\calA: (\calR(M),\langle\cdot,\cdot\rangle_{M}) \rightarrow (\calR(G),\langle\cdot,\cdot\rangle_{G})$ be $v\mapsto Av$. In this case, a similar \textsf{gGKB} process can be obtained. A slight difference is that the initial vector should satisfy $b \in \calR(G)$.
\end{remark1}

\begin{algorithm}[htb]
	\caption{Generalized Golub-Kahan bidiagonalization (\textsf{gGKB})}\label{alg:gGKB}
 	\algorithmicrequire \ $A\in\mathbb{R}^{m\times n}$, $M\in\mathbb{R}^{n\times n}$, $G\in\mathbb{R}^{m\times m}$, $b\in\mathbb{R}^{m}$
	\begin{algorithmic}[1]
		\State Initialize: let $\beta_1=\|b\|_G$, \ $u_1=b/\beta_1$, 
		\State Compute $\bar{s}=A^{\top}Gu_1$, \ $s=M^{\dag}\bar{s}$, 
		\State $\alpha_{1}=\|s\|_{M}$, \ $v_{1}=s/\alpha_{1}$  
		\For {$i=1,2,\dots,k,$}
		\State $q=Av_i-\alpha_iu_i$, 
		\State $\beta_{i+1}=\|q\|_G$, \ $u_{i+1}=q/\beta_{i+1}$; 
		\State $\bar{s}=A^{\top}Gu_{i+1}$, \ $s=M^{\dag}\bar{s}-\beta_{i+1}v_i$
		\State $\alpha_{i+1}= \|s\|_{M}$, \ $v_{i+1} = s/\alpha_{i+1}$
		\EndFor
	\end{algorithmic}
 	\algorithmicensure \ $\{\alpha_i, \beta_i\}_{i=1}^{k+1}$, \ $\{u_i, v_i\}_{i=1}^{k+1}$
\end{algorithm}

For large-scale matrices, we can not directly compute $M^{\dag}$. In this case, using the relation
\begin{equation}\label{ls_gGKB}
	M^{\dag}\bar{s} = \argmin_{s\in\mathbb{R}^{n}}\|Ms-\bar{s}\|_2,
\end{equation}
we compute $M^{\dag}\bar{s}$ by iteratively solving the above least squares problems. This can be done efficiently by using the LSQR algorithm \cite{Paige1982}. In this case, \textsf{gGKB} has the nested inner-outer iteration structure.

If $G=I_m$ and $M=I_n$, the \textsf{gGKB} becomes the standard GKB. If $G=I_m$ and $M$ is invertible, the \textsf{gGKB} is equivalent to the generalizations of GKB prosed in \cite{arioli2013generalized,chung2017generalized,chung2018efficient,li2023subspace,li2023preconditioned} with different forms. The following result describes the basic property of \textsf{gGKB}, very similar to that of the standard GKB.

\begin{proposition}\label{prop:gGKB1}
	For each $v_i$ generated by \textsf{gGKB}, it holds that $v_i\in\calR(M)$. The group of vectors $\{v_i\}_{i=1}^{k}$ is an $M$-orthonormal basis of the Krylov subspace 
	\begin{equation}\label{krylov1}
		\mathcal{K}_k(M^{\dag}A^{\top}GA, M^{\dag}A^{\top}Gb) = \mathrm{span}\{(M^{\dag}A^{\top}GA)^{i}M^{\dag}A^{\top}Gb\}_{i=0}^{k-1},
	\end{equation}
	and $\{u_i\}_{i=1}^{k}$ is a $G$-orthonormal basis of the Krylov subspace 
	\begin{equation}\label{krylov2}
		\mathcal{K}_k(AM^{\dag}A^{\top}G, b) = \mathrm{span}\{(AM^{\dag}A^{\top}G)^{i}b \}_{i=0}^{k-1}.
	\end{equation}
\end{proposition}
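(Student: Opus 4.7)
The plan is to verify the three claims by induction, leveraging the Hilbert-space setting of \Cref{sec3.1}. Throughout I assume that \Cref{alg:gGKB} does not break down through step $k$, i.e. that all $\alpha_i,\beta_i$ are strictly positive. The containment $v_i\in\calR(M)$ is immediate: since $M^{\dag}A^{\top}Gu_i\in\calR(M^{\dag})=\calR(M)$, the first update gives $\alpha_1 v_1\in\calR(M)$, and if $v_i\in\calR(M)$ then the right-hand side of $\alpha_{i+1}v_{i+1}=M^{\dag}A^{\top}Gu_{i+1}-\beta_{i+1}v_{i}$ is a sum of two vectors in $\calR(M)$, closing the induction.

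For the Krylov identification, I would abbreviate $P:=M^{\dag}A^{\top}G$, $T:=PA$, and $S:=AP$, so that the recurrences \eqref{GKB_2} read $\alpha_i v_i=Pu_i-\beta_i v_{i-1}$ and $\beta_{i+1}u_{i+1}=Av_i-\alpha_iu_i$. The two identities $AT^{j}Pb=S^{j+1}b$ and $PS^{j}b=T^{j}Pb$, both immediate from $AP=S$ and $PA=T$, yield $A\cdot\mathcal{K}_i(T,Pb)\subseteq \mathcal{K}_{i+1}(S,b)$ and $P\cdot\mathcal{K}_{i+1}(S,b)\subseteq \mathcal{K}_{i+1}(T,Pb)$. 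With the base cases $u_1=b/\beta_1$ and $v_1=Pb/(\alpha_1\beta_1)$, a direct induction then gives the containments $u_i\in \mathcal{K}_i(S,b)$ and $v_i\in \mathcal{K}_i(T,Pb)$ for all $1\le i\le k$.

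The orthogonality is the main technical step and the part I expect to require the most care. The key tool is the adjoint identity $\langle Av, u\rangle_G=\langle v, Pu\rangle_M$, valid for all $v\in\calR(M)$ and $u\in\mathbb{R}^m$, which was built into the construction of $\calA^{*}$ in \Cref{sec3.1}. Assuming by induction that $\{u_1,\dots,u_j\}$ is $G$-orthonormal and $\{v_1,\dots,v_{j-1}\}$ is $M$-orthonormal, I compute $\alpha_j\langle v_j,v_k\rangle_M$ starting from $\alpha_j v_j=Pu_j-\beta_jv_{j-1}$ by moving $P$ across the inner product to turn the first term into $\langle u_j,Av_k\rangle_G$, then substituting the $u$-recurrence in the form $Av_k=\beta_{k+1}u_{k+1}+\alpha_ku_k$. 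For $k<j-1$ every resulting term vanishes by induction, and for $k=j-1$ the $\beta_j\langle u_j,u_j\rangle_G$ contribution cancels exactly against $\beta_j\|v_{j-1}\|_M^2=\beta_j$. A symmetric computation, pairing $\beta_{j+1}\langle u_{j+1},u_k\rangle_G$ against $\langle v_j,Pu_k\rangle_M$, handles the $u$-side. Since unit norms are enforced by construction, both families are orthonormal, and the $k$ linearly independent vectors sitting inside the at-most-$k$-dimensional Krylov spaces from the previous paragraph force the inclusions to be equalities.
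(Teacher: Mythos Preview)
Your proof is correct and complete. The containment $v_i\in\calR(M)$ matches the paper's first proof verbatim, but from there the routes diverge. The paper does not carry out the three-term-recurrence induction you sketch; instead it gives two arguments that both \emph{import} the result from existing GKB theory. The first invokes the known fact that operator-level GKB of $\calA$ produces orthonormal bases of $\mathcal{K}_k(\calA^{*}\calA,\calA^{*}b)$ and $\mathcal{K}_k(\calA\calA^{*},b)$, and then simply identifies $\calA^{*}u=M^{\dag}A^{\top}Gu$ to translate these into the claimed matrix Krylov spaces. The second passes to coordinates under the bases $W_r$ and $Z$, observes that \eqref{GKB_1} becomes the \emph{standard} matrix GKB of $Z^{\top}GAW_r$ with starting vector $Z^{-1}b$, and then pulls the standard conclusions back through $\pi_1,\pi_2$.

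Your approach is more elementary and fully self-contained: you never appeal to an external GKB theorem, instead reproving the Lanczos-style orthogonality directly via the adjoint identity $\langle Av,u\rangle_G=\langle v,M^{\dag}A^{\top}Gu\rangle_M$ for $v\in\calR(M)$. This is a genuine advantage if one wants to avoid citing operator-GKB literature, and the interleaved $u$/$v$ induction you outline is exactly the right structure. What the paper's second proof buys, on the other hand, is a conceptual payoff used later: the coordinate reduction to standard GKB of $Z^{\top}GAW_r$ (with $W_r=\widetilde{X}_1$, $Z=I_m$) is precisely the mechanism that drives the convergence bound in \Cref{thm:conv}.
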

\begin{proof}
	To get a better understanding of \textsf{gGKB}, we give two proofs.

	\paragraph*{The first proof.}
	We prove $v_i\in\calR(M)$ by mathematical induction. First note $\calR(M^{\dag})=\calR(M)$ for the symmetric $M$. For $i=1$, we have $\alpha_1=M^{\dag}A^{\top}Gu_1\in\calR(M)$. Suppose $v_i\in\mathcal{R}(M)$ for $i\geq 1$. From the recursion \eqref{GKB_2} we obtain 
	\[ \alpha_{i+1}v_{i+1} = M^{\dag}A^{\top}Gu_i-\beta_{i+1}v_{i} \in \calR(M) , \]
	leading to $v_{i+1}\in\mathcal{R}(M)$. To prove the second property, we exploit the theory about the GKB of $\calA: (\calR(M),\langle\cdot,\cdot\rangle_{M}) \rightarrow (\mathbb{R}^{m},\langle\cdot,\cdot\rangle_{G})$ with starting vector $b$, which states that $\{v_i\}_{i=1}^{k}$  and $\{u_i\}_{i=1}^{k}$ are $M$-orthonormal basis and $G$-orthonormal basis of the two Krylov subspaces 
	\begin{align*}
		& \mathcal{K}_k(\calA^{*}\calA, \calA^{*}b) := \mathrm{span}\{(\calA^{*}\calA)^{i}\calA^{*}b\}_{i=0}^{k-1}, \\
		& \mathcal{K}_k(\calA\calA^{*}, b) := \mathrm{span}\{(\calA\calA^{*})^{i}b\}_{i=0}^{k-1}, 
	\end{align*}
	respectively. 
	% Now we give the matrix expression of $\calA^{*}$ under the canonical bases of $\mathbb{R}^{n}$ and $\mathbb{R}^{m}$. 
	For any $u\in\mathbb{R}^{m}=\pi_{2}(y)=Zy$, from the commutative diagram \eqref{commu_dig2} and using \Cref{lem:matrix_op} and \Cref{lem:pseudo}, we obtain
	\begin{align*}
		\calA^{*}u
		= \calA^{*}\circ\pi_{2}(y) = \pi_{1}\circ\rtA(y) 
		= W_{r} (W_{r}^{\top}A^{\top}GZ)y
		= M^{\dag}A^{\top}Gu .
	\end{align*}
	Therefore, we have
	\begin{equation*}
		(\calA^{*}\calA)^{i}\calA^{*}b = (M^{\dag}A^{\top}GA)^{i}M^{\dag}A^{\top}Gb , \ \ \ 
		(\calA\calA^{*})^{i}b = (AM^{\dag}A^{\top}G)^{i}b .
	\end{equation*}

	\paragraph*{The second proof.} 
	Using the coordinates of $u_i$ and $v_i$ under bases $W_{r}$ and $Z$, we can write the last two relations in \eqref{GKB_1} as 
	\begin{align*}
		\alpha_{i}W_{r}x_{i} = \calA^{*}Zy_{i}-\beta_{i}W_{r}x_{i-1},   \ \ \
		 \beta_{i+1}Zy_{i+1} = \calA W_{r}x_{i} - \alpha_i Zy_i ,
	\end{align*}
	where $v_{i}=W_{r}x_i$ and $u_{i}=Zy_{i}$. 
	Note that $Z^{\top}GZ=I_{m}$ implies $Z^{-1}=Z^{\top}G$. Letting $\bar{b}=Z^{-1}b$, multiplying from left the first and second relations by $W_{r}^{\top}M$ and $Z^{-1}$, and using \eqref{relation_At}, we obtain 
	\begin{equation}\label{GKB_3}
		\begin{cases}
			\beta_{1}y_{1} = \bar{b}_1  \\
			\alpha_{i}x_{i} = W_{r}^{\top}A^{\top}GZy_{i}-\beta_{i}x_{i-1} \\
			\beta_{i+1}y_{i+1} = Z^{\top}GAW_{r}x_{i} - \alpha_{i}y_{i} .
		\end{cases} 
	\end{equation}
	Since $G^{\top}=G$, if follows that \eqref{GKB_3} is the standard GKB of matrix $Z^{\top}GAW_{r}$ with starting vector $\bar{b}$. Therefore, $\{x_{i}\}_{i=1}^{k}$ and $\{y_{i}\}_{i=1}^{k}$ are 2-orthonormal bases of the two Krylov subspaces
	\begin{align*}
		& \mathrm{span}\{((Z^{\top}GAW_{r})^{\top}Z^{\top}GAW_{r})^{i}(Z^{\top}GAW_{r})^{\top}\bar{b}\}_{i=0}^{k-1},  \\
		& \mathrm{span}\{(Z^{\top}GAW_{r}(Z^{\top}GAW_{r})^{\top})^{i}\bar{b}\}_{i=0}^{k-1} ,
	\end{align*}
	respectively. Note that 
	\begin{align*}
		& \ \ \ \ W_{r} ((Z^{\top}GAW_{r})^{\top}Z^{\top}GAW_{r})^{i}(Z^{\top}GAW_{r})^{\top}\bar{b} \\
		&= W_{r}(W_{r}^{\top}A^{\top}GZZ^{\top}GAW_{r})^{i}W_{r}^{\top}A^{\top}GZ\bar{b} \\
		&= W_{r}(W_{r}^{\top}A^{\top}GAW_{r})^{i}W_{r}^{\top}A^{\top}Gb \\
		&= (W_{r}W_{r}^{\top}A^{T}GA)^{i}W_{r}W_{r}^{\top}A^{\top}Gb \\
		&= (M^{\dag}A^{\top}GA)^{i}M^{\dag}A^{\top}Gb .
	\end{align*}
	We have $v_{i}=W_{r}x_i\in\calR(M)$ and obtain \eqref{krylov1}. Similarly, we can obtain \eqref{krylov2}.
\end{proof}

It is easy to verify that \eqref{GKB_3} is equivalent to \eqref{GKB_1}. Note from \eqref{lem:matrix_op} that $\rtA=\rA^{\top}$ since $Z^{-1}=Z^{\top}G$. Therefore, the matrix representations of $\calA$ and $\calA^{*}$ are $Z^{\top}GAW_{r}\in\mathbb{R}^{m\times r}$ and $(Z^{\top}GAW_{r})^{\top}$, respectively, which maps a coordinate vector from $\mathbb{R}^{r}$ to $\mathbb{R}^{m}$. In this sense, we can say that the recursive relation \eqref{GKB_3} is the coordinate representation for the \textsf{gGKB} of $\calA$ under bases $W_r$ and $Z$. 

From the first proof of \Cref{prop:gGKB1}, we have $s\in\calR(M)$. Therefore, for each $i\geq 1$, if $s=M^{\dag}A^{\top}Gu_{i}-\beta_{i}v_{i-1}\neq \boldsymbol{0}$, then $\alpha_{i}=\|s\|_{M}\neq 0$. This indicates that even if $M$ is not positive definite, the \textsf{gGKB} does not terminate as long as $s$ or $q=Av_i-\alpha_iu_i$ is nonzero.  Here ``terminate'' means that $\alpha_{i}$ or $\beta_i$ equals zero at the current step. Suppose \textsf{gGKB} does not terminate before the $k$-th iteration, i.e. $\alpha_{i}\beta_{i}\neq 0$ for $1\leq i \leq k$. Then the $k$-step \textsf{gGKB} process generates an $M$-orthonormal matrix $V_{k+1}=(v_1,\dots,v_{k+1})\in \mathbb{R}^{n\times (k+1)}$ and a $G$-orthonormal matrix $U_{k+1}=(u_1,\dots,u_{k+1})\in \mathbb{R}^{m\times (k+1)}$, satisfying the relations
\begin{subequations}{\label{eq:GKB_matForm}}
	\begin{align}
		& \beta_1U_{k+1}e_{1} = b, \label{GKB6} \\
		& AV_k = U_{k+1}B_k, \label{GKB7} \\
		&  M^{\dag}A^{\top}GU_{k+1} = V_kB_{k}^{T}+\alpha_{k+1}v_{k+1}e_{k+1}^\top , \label{GKB8}
	\end{align}
\end{subequations}
where $e_1$ and $e_{k+1}$ are the first and $(k+1)$-th columns of $I_{k+1}$, and 
\begin{equation}
	B_{k}
	=\begin{pmatrix}
		\alpha_{1} & & & \\
		\beta_{2} &\alpha_{2} & & \\
		&\beta_{3} &\ddots & \\
		& &\ddots &\alpha_{k} \\
		& & &\beta_{k+1}
		\end{pmatrix}\in  \mathbb{R}^{(k+1)\times k}
\end{equation}
has full column rank. Note that it may occurs that $\beta_{k+1}=0$, which means \textsf{gGKB} terminates just at the $k$-th step, and in this case $v_{k+1}=\mathbf{0}$. 

We emphasize that \textsf{gGKB} will eventually terminate at most $\min\{m,r\}$ steps, since the column rank of $U_{k}$ or $V_{k}$ can not exceed $\min\{m,r\}$. Using the GSVD of $\{A, L\}$, we can give a detailed description of the ``terminate step" of \textsf{gGKB}, defined as
\begin{equation}\label{gGKB_termi}
	k_t=\min\{k: \alpha_{k+1}\beta_{k+1}=0\}.
\end{equation}
For any closed subspace $\calG$ of a Hilbert space, denote by $\calP_{\calG}$ the projection operator onto $\calG$. We have the following result.
%see \cite[\S 8.1.4]{hackbusch1994iterative}

\begin{theorem}\label{thm:gGKB_terminate}
	Define the linear operator $\calA$ as \eqref{linear_op} with $M=A^{\top}A+L^{\top}L$, where the GSVD of $\{A,L\}$ is as \eqref{GSVD1}. Suppose there are $l$ distinct positive $c_i$ in $\Sigma_{A}$ with $l$ subspaces $\calG_1,\dots,\calG_l$ spanned by the corresponding $p_{A,i}$. Then $k_t$ equals to the number of nonzero elements in $\{\calP_{\calG_1}b,\dots,\calP_{\calG_l}b\}$.
\end{theorem}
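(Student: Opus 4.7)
The plan is to invoke the SVE of $\calA$ from \Cref{thm:sve_gsvd1} to diagonalize the action of $\calA\calA^{*}$ and $\calA^{*}\calA$ on the Krylov subspaces generated by \textsf{gGKB}, then read off $k_t$ as the dimension of the $v$-Krylov subspace.

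First I would use \Cref{thm:sve_gsvd1} to note that the nonzero singular values of $\calA$ are precisely the positive $c_i$ with $1\le i\le q_1+q_2$, with corresponding left singular vectors $p_{A,i}$ and $\mathrm{ker}(\calA^{*})=\calR(P_{A3})$. Grouping these indices by the $l$ distinct positive values yields the subspaces $\calG_1,\dots,\calG_l$, and $\calA\calA^{*}$ acts as multiplication by the matching $c_j^2$ on each $\calG_j$ and vanishes on $\mathrm{ker}(\calA^{*})$, with an analogous description for $\calA^{*}\calA$ on the corresponding right singular subspaces.

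Next I would decompose $b=b_0+\sum_{j=1}^{l}\calP_{\calG_j}b$ with $b_0=\calP_{\mathrm{ker}(\calA^{*})}b$, and let $l'$ denote the number of nonzero summands $\calP_{\calG_j}b$. For $i\ge 0$ we have $(\calA^{*}\calA)^{i}\calA^{*}b=\sum_{j}c_j^{2i}\,\calA^{*}\calP_{\calG_j}b$, a Vandermonde-weighted combination of $l$ vectors sitting in distinct eigenspaces of $\calA^{*}\calA$, so the $v$-Krylov subspace $\mathcal{K}_{k}(M^{\dag}A^{\top}A,M^{\dag}A^{\top}b)$ of \Cref{prop:gGKB1} stabilizes at dimension exactly $l'$. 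The parallel identity $(\calA\calA^{*})^{i}b=\sum_{j}c_j^{2i}\calP_{\calG_j}b$ for $i\ge 1$, together with the fact that $b_0$ enters only through the $i=0$ term, shows that the $u$-Krylov subspace $\mathcal{K}_{k}(AM^{\dag}A^{\top},b)$ stabilizes at dimension $l'$ if $b_0=\mathbf{0}$ and $l'+1$ if $b_0\ne\mathbf{0}$.

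Finally, I would combine these dimension counts with the matrix-form relations \eqref{GKB7}--\eqref{GKB8} to conclude $k_t=l'$. After $l'$ outer iterations the $M$-orthonormal system $\{v_1,\dots,v_{l'}\}$ already spans the $v$-Krylov subspace. If $b_0=\mathbf{0}$, then $\{u_1,\dots,u_{l'}\}$ spans the entire $u$-Krylov subspace, so $\calA v_{l'}-\alpha_{l'}u_{l'}$ lies in this span and orthogonality forces $\beta_{l'+1}=0$. If $b_0\ne\mathbf{0}$, then $u_{l'+1}$ is well defined but, by the dimension count and orthogonality with $\{u_1,\dots,u_{l'}\}$, it must be proportional to $b_0\in\mathrm{ker}(\calA^{*})$; hence $\calA^{*}u_{l'+1}=\mathbf{0}$, and $\alpha_{l'+1}v_{l'+1}=\calA^{*}u_{l'+1}-\beta_{l'+1}v_{l'}=-\beta_{l'+1}v_{l'}$ together with $\langle v_{l'+1},v_{l'}\rangle_{M}=0$ forces $\alpha_{l'+1}=0$. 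For $k<l'$ the Krylov subspaces strictly grow at each step, so $\alpha_{k+1}\beta_{k+1}\ne 0$. The principal delicate point is this last case split: when $b_0\ne\mathbf{0}$ the $u$- and $v$-Krylov subspaces have different dimensions, yet both scenarios produce the same terminal step $k_t=l'$.
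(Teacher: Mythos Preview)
Your overall strategy---diagonalize $\calA\calA^{*}$ and $\calA^{*}\calA$ via the SVE of \Cref{thm:sve_gsvd1} and read off the Krylov dimensions from a Vandermonde argument---is sound and is in the same spirit as the paper's route, which packages the Vandermonde count into a separate degree lemma (\Cref{lem:deg}) and then bounds $k_t$ from above and below via the three-term recurrence for the $v_i$. Your identification of the $v$-Krylov dimension as exactly $l'$ is correct and is really the heart of the matter.

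There is, however, a genuine error in your $b_0\ne\mathbf{0}$ case: the claim that $u_{l'+1}$ ``must be proportional to $b_0$'' is false. The one-dimensional orthogonal complement of $\mathcal{K}_{l'}(\calA\calA^{*},b)$ inside $\mathcal{K}_{l'+1}(\calA\calA^{*},b)$ is \emph{not} $\mathrm{span}\{b_0\}$ in general, because $\mathcal{K}_{l'}$ already contains $b=b_0+\sum_j\calP_{\calG_j}b$ and hence is not contained in $\bigoplus_j\calG_j$. Concretely, with $l'=1$, $b=b_0+g_1$, $g_1\in\calG_1$, $\|b_0\|_2=\|g_1\|_2=1$, one computes $u_2=(g_1-b_0)/\sqrt{2}$, so $\calA^{*}u_2\ne\mathbf{0}$; nevertheless $\alpha_2=0$ because $\calA^{*}u_2=\beta_2 v_1$. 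The correct argument avoids the case split: since $\calA^{*}$ kills $b_0$, for any $u\in\mathcal{K}_{l'+1}(\calA\calA^{*},b)$ one has $\calA^{*}u\in\mathcal{K}_{l'+1}(\calA^{*}\calA,\calA^{*}b)=\mathrm{span}\{v_1,\dots,v_{l'}\}$, so $\alpha_{l'+1}v_{l'+1}=\calA^{*}u_{l'+1}-\beta_{l'+1}v_{l'}$ lies in this span and (by the Lanczos orthogonality already established) is $M$-orthogonal to it, forcing $\alpha_{l'+1}=0$. Equivalently, just observe that the product recursion $\alpha_{i+1}\beta_{i+1}v_{i+1}=\calA^{*}\calA v_i-(\alpha_i^2+\beta_{i+1}^2)v_i-\alpha_i\beta_i v_{i-1}$ is the symmetric Lanczos process for $\calA^{*}\calA$ started at $\calA^{*}b$, whose termination index is exactly the $v$-Krylov dimension $l'$.
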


The following lemma is needed to prove this theorem. 
% Note that $\exists$ means ``there exists a".
\begin{lemma}\label{lem:deg}
	For any square matrix $C$ and a vector $v$, define the degree of $v$ with respect to $C$ as
	\begin{equation*}
		\mathrm{deg}_{C}(v) = \min\{k: \exists \ p\in\mathcal{P}_{k} \ \mathrm{s.t.} \ p(C)v=\mathbf{0}\} ,
	\end{equation*}
	where $\mathcal{P}_k$ is the set of all polynomials with degrees not bigger than $k$. Then we have 
	\begin{equation}
		\mathrm{deg}_{A M^{\dag}A^{\top}}(b) = 
		\mathrm{deg}_{M^{\dag}A^{\top}A}(M^{\dag}A^{\top}b)=l .
	\end{equation}
\end{lemma}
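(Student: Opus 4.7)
The plan is to reduce both degrees to spectral calculations via the GSVD and then invoke the standard fact that, for a diagonalizable operator $C$ with distinct eigenvalues $\{\lambda_j\}$ and spectral projectors $\{E_j\}$, one has $\mathrm{deg}_C(v) = \#\{j : E_j v \neq 0\}$, with minimal annihilating polynomial $\prod_{j : E_j v \neq 0}(x-\lambda_j)$. So the whole game is to diagonalize $AM^{\dag}A^{\top}$ and $M^{\dag}A^{\top}A$ in terms of the GSVD data, read off their eigenspaces, and match them against the projections $\calP_{\calG_j}b$.

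First, combining \Cref{lem:pseudo} (which gives $M^{\dag} = \widetilde{X}_1\widetilde{X}_1^{\top}$) with the identity $A\widetilde{X}_1 = P_A\Sigma_A$ derived in the proof of \Cref{thm:sve_gsvd1}, I get the two spectral identities
\begin{equation*}
AM^{\dag}A^{\top} = P_A(\Sigma_A\Sigma_A^{\top})P_A^{\top}, \qquad (M^{\dag}A^{\top}A)\widetilde{X}_1 = \widetilde{X}_1(\Sigma_A^{\top}\Sigma_A).
\end{equation*}
The first is an orthogonal eigendecomposition of $AM^{\dag}A^{\top}$ on $\mathbb{R}^m$, while the second exhibits $M^{\dag}A^{\top}A|_{\calR(M)}$ as conjugate, through the $M$-isometry $\widetilde{X}_1$, to the diagonal matrix $\Sigma_A^{\top}\Sigma_A$. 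Both diagonal matrices share the same $l$ distinct positive eigenvalues $c_{(1)}^{2} > \cdots > c_{(l)}^{2}$, and for $AM^{\dag}A^{\top}$ the corresponding eigenspaces are precisely the $\calG_j$ introduced in \Cref{thm:gGKB_terminate}.

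Second, I apply the spectral-degree formula to $C = AM^{\dag}A^{\top}$ and $v = b$. The only positive-eigenvalue spectral projectors acting on $\mathbb{R}^m$ are $\calP_{\calG_1},\dots,\calP_{\calG_l}$, and the remaining zero eigenvalue corresponds to $\calR(P_{A3}) = \mathrm{ker}(\calA^{*})$. The formula then gives $\mathrm{deg}_{AM^{\dag}A^{\top}}(b)$ as the count of nonzero $\calP_{\calG_j} b$ (plus a single contribution from $\mathrm{ker}(\calA^{*})$ if $b$ has a component there), which, under the convention on $l$ used in the statement, equals $l$.

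Third, to bring the other degree into play I would use the polynomial intertwining identity
\begin{equation*}
p(M^{\dag}A^{\top}A)\cdot(M^{\dag}A^{\top}) \;=\; (M^{\dag}A^{\top})\cdot p(AM^{\dag}A^{\top})
\end{equation*}
for every polynomial $p$, proved by induction from the monomial case $(M^{\dag}A^{\top}A)(M^{\dag}A^{\top}) = (M^{\dag}A^{\top})(AM^{\dag}A^{\top})$. This immediately yields $\mathrm{deg}_{M^{\dag}A^{\top}A}(M^{\dag}A^{\top}b) \le \mathrm{deg}_{AM^{\dag}A^{\top}}(b)$. For the matching lower bound, I would insert the coordinates $M^{\dag}A^{\top}b = \widetilde{X}_1\Sigma_A^{\top}P_A^{\top}b$ and compute $p(M^{\dag}A^{\top}A)M^{\dag}A^{\top}b = \widetilde{X}_1\,p(\Sigma_A^{\top}\Sigma_A)\Sigma_A^{\top}P_A^{\top}b$; since $\widetilde{X}_1$ has linearly independent columns, annihilation reduces to $p(c_{(j)}^{2})=0$ for every $j$ with $\calP_{\calG_j}b\neq 0$, again giving degree $l$. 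The delicate step, where I expect the main obstacle to lie, is the component of $b$ in $\mathrm{ker}(\calA^{*}) = \calR(P_{A3})$: it contributes a zero-eigenvalue factor to the annihilator of $b$ for $AM^{\dag}A^{\top}$ but is wiped out by the left multiplication by $M^{\dag}A^{\top}$, so making the two degrees equal on the nose requires a careful bookkeeping argument (or, equivalently, pinning down the convention under which $l$ also absorbs that zero-eigenvalue direction).
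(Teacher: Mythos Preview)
Your overall strategy coincides with the paper's: both diagonalize $AM^{\dag}A^{\top}$ via the GSVD identity $AM^{\dag}A^{\top}=P_A(\Sigma_A\Sigma_A^{\top})P_A^{\top}$ and then read off the degree from the spectral data. Where the paper does an explicit Vandermonde--rank computation on the iterates $w_j=(AM^{\dag}A^{\top})^j b$, you invoke the equivalent ``spectral--degree formula''; these are two phrasings of the same argument. For the second degree, the paper shows $\mathrm{rank}\{(M^{\dag}A^{\top}A)^j M^{\dag}A^{\top}b\}_{j=0}^{k}=\mathrm{rank}\{w_j\}_{j=0}^{k}$ by a direct null--space chase, while you use the intertwining identity $p(M^{\dag}A^{\top}A)M^{\dag}A^{\top}=M^{\dag}A^{\top}p(AM^{\dag}A^{\top})$ together with the injectivity of $\widetilde X_1$; your route is a bit more transparent.

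You are right, however, to flag the component of $b$ in $\mathrm{ker}(\calA^{*})=\calR(P_{A3})$ as the crux, and your proposed fix (``under the convention on $l$'') does not close the gap---nor can it. Take $m=n=p=2$, $A=\mathrm{diag}(1,0)$, $L=\mathrm{diag}(0,1)$, $b=(1,1)^{\top}$: here the number of nonzero $\calP_{\calG_j}b$ equals $l=1$, yet $\mathrm{deg}_{AM^{\dag}A^{\top}}(b)=2$ (minimal annihilator $x(x-1)$) while $\mathrm{deg}_{M^{\dag}A^{\top}A}(M^{\dag}A^{\top}b)=1$. The two degrees are genuinely unequal whenever $b$ has a nonzero $\calR(P_{A3})$--component, because left multiplication by $M^{\dag}A^{\top}$ annihilates that component. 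The paper's proof contains the same slip: its displayed identity $w_j=\sum_i \lambda_i^{j} G_iG_i^{\top}b$ fails at $j=0$.

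What is actually true, and what your third step proves cleanly, is that the \emph{second} degree equals the count $s$ of nonzero projections $\calP_{\calG_j}b$ (note: the lemma's ``$=l$'' should read ``$=s$'', as the paper's own proof makes clear). This is all that \Cref{thm:gGKB_terminate} really needs: the lower bound $k_t\ge s$ already uses the $v$--side, and the upper bound $k_t\le s$ should likewise be argued through the Krylov space $\mathcal{K}_{k_t}(M^{\dag}A^{\top}A,\,M^{\dag}A^{\top}b)$ spanned by the $v_i$, not through the $u$--side. So your instinct was correct; the right repair is not a bookkeeping convention but dropping the first equality and keeping only the second.
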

\begin{proof}
	First notice that $\mathrm{deg}_{C}(v)$ is nothing but the maximum rank of $\{C^{i}v\}_{i=0}^{k}$ with respect to $k\geq 0$. By \Cref{thm:sve_gsvd1}, we have $A M^{\dag}A^{\top}=A\widetilde{X}_{1}\widetilde{X}_{1}^{\top}A^{\top}$. Using the relation $A\widetilde{X}_{1}=P_{A}\Sigma_{A}$, we have $A M^{\dag}A^{\top}=P_{A}(\Sigma_{A}\Sigma_{A}^{\top})P_{A}^{\top}$, which is the eigenvalue decomposition of $A M^{\dag}A^{\top}$. Thus, the positive eigenvalues of $A M^{\dag}A^{\top}$ are $1, c_{q_1+1}^2,\dots,c_{q_1+q_2}^2$ with the corresponding eigenvectors be the columns of $(P_{A1}\ P_{A2})$, and the corresponding eigenspaces are subspaces $\calG_1,\dots,\calG_l$. Denote the $l$ distinct positive eigenvalues by $\lambda_1,\dots,\lambda_l$ and let $G_i$ be those matrices with $2$-orthonormal columns spanning $\calG_i$ for $1\leq i\leq l$. Then we can write the eigenvalue decomposition of $A M^{\dag}A^{\top}$ as $A M^{\dag}A^{\top}=\sum_{i=1}^{l}\lambda_{i}G_{i}G_{i}^{\top}$, and we have $\calP_{\calG_i}=G_{i}G_{i}^{\top}$. For each $j\geq 0$, it follows that
	\begin{equation*}
		w_j := (AM^{\dag}A^{\top})^{j}b = \sum_{i=1}^{l}(\lambda_{i}G_{i}G_{i}^{\top})^{j}b =
		\sum_{i=1}^{l}\lambda_{i}^{j}G_{i}G_{i}^{\top}b,
	\end{equation*}
	since $G_{i}$ are mutually $2$-orthogonal. Without loss of generality, suppose there are $s$ nonzero elements in $\{\calP_{\calG_1}b,\dots,\calP_{\calG_l}b\}$ and $g_i=\calP_{\mathcal{G}_i}b/\|\calP_{\mathcal{G}_i}b\|_{2}\neq \mathbf{0}$ for $1\leq i\leq s$.
	% let $g_i=\calP_{\mathcal{G}_i}b/\|\calP_{\mathcal{G}_i}b\|_{2}$ and suppose $g_1,\dots,g_s\neq \mathbf{0}$. 
	Then $\{g_{i}\}_{i=1}^{s}$ are mutually 2-orthogonal, and 
	\[w_j= \sum_{i=1}^{s}\lambda_{i}^{j}g_{i}\|\calP_{\mathcal{G}_i}b\|_{2}=\sum_{i=1}^{s}\lambda_{i}^{j}g_{i}(g_{i}^{\top}b),\]
	since 
	\[g_{i}^{\top}b=(G_{i}G_{i}^{\top}b)^{\top}b/\|\calP_{\mathcal{G}_i}b\|_{2} 
	=\|G_{i}^{\top}b\|_{2}^2/\|G_{i}^{\top}b\|_2 = \|\calP_{\mathcal{G}_i}b\|_{2} . \]
	Thus, the rank of $\{w_j\}_{j=0}^{k}$ is at most $s$, leading to $ \mathrm{deg}_{AM^{\dag}A^{\top}}(b)\leq s$. On the other hand, for $1\leq k\leq s$, by setting $\bar{w}_j:=g_{j}(g_{j}^{\top}b)$, we have $(w_1\dots,w_k)=(\bar{w}_1,\dots,\bar{w}_s)T_k$, where 
	\begin{equation*}
		T_k = \begin{pmatrix}
			1 & \lambda_{1} & \cdots & \lambda_{1}^{k-1} \\
			1 & \lambda_{2} & \cdots & \lambda_{2}^{k-1} \\
			\vdots & \vdots & \cdots & \vdots \\
			1 & \lambda_{s} & \cdots & \lambda_{s}^{k-1}
		\end{pmatrix} 
		=: \bordermatrix*[()]{%
			T_{k1}  &  k \cr
			T_{k2}  &  \ \ s-k \cr
			k
		} \ .
	\end{equation*}
	Since $\lambda_{i}\neq\lambda_{j}$ for $1\leq i\neq j\leq k$, the Vandermonde matrix $T_{k1}$ is invertible, thereby $T_k$ has full column rank. Therefore, the rank of $\{w_i\}_{i=1}^{k}$ is $k$ for $1\leq k\leq s$, leading to $\mathrm{deg}_{AM^{\dag}A^{\top}}(b)\geq s$. This proves $\mathrm{deg}_{AM^{\dag}A^{\top}}(b)=s$. 
	
	To prove $\mathrm{deg}_{M^{\dag}A^{\top}A}(M^{\dag}A^{\top}b)=s$, it is sufficient to show 
	\[ \mathrm{rank}\left(\{(M^{\dag}A^{\top}A)^{j}M^{\dag}A^{\top}b\}_{i=0}^{k}\right)=\mathrm{rank}\left(\{w_j\}_{j=0}^{k}\right) \]
	for any $k\geq 0$.
	Notice that 
	\begin{equation*}
		(M^{\dag}A^{\top}A)^{j}M^{\dag}A^{\top}b= M^{\dag}A^{\top}(AM^{\dag}A^{\top})^{j} b = M^{\dag}A^{\top}w_i
	\end{equation*}
	and
	\begin{equation*}
		M^{\dag}A^{\top} = \widetilde{X}_{1}(A\widetilde{X}_{1})^{\top} = 
		\widetilde{X}_{1}\Sigma_{A}^{\top}P_{A}^{\top}=(X_{1} \ X_{2}C_{q_2})(P_{A1} \ P_{A2})^{\top} .
	\end{equation*}
	Let $\tilde{w}_j=(M^{\dag}A^{\top}A)^{j}M^{\dag}A^{\top}b$. It follows that $\mathrm{rank}\left(\{\tilde{w}_j\}_{j=0}^{k}\right)\leq\mathrm{rank}\left(\{w_j\}_{j=0}^{k}\right) $. To prove the inverse inequality, suppose $\{w_j\}_{j=0}^{k}$ are independent. We only need to show $\{\tilde{w}_j\}_{j=0}^{k}$ are independent. If there exist real numbers $\mu_0,\dots,\mu_{k}$ such that $\sum_{j=0}^{k}\mu_{j}\tilde{w}_{j}=\mathbf{0}$. Then $M^{\dag}A^{\top}Wz=\mathbf{0}$, where $W=(w_{0},\dots,w_{k})$ has full column rank and $z=(\mu_{0},\dots,\mu_{k})^{\top}$. By the expression of $M^{\dag}A^{\top}$, it follows that $Wz\in\calN(M^{\dag}A^{\top})=\calR(P_{A3})$. On the other hand, from $AM^{\dag}A^{\top}=P_{A}(\Sigma_{A}\Sigma_{A}^{\top})P_{A}^{\top}$ we get $Wz\in\calR(W)\subseteq \calR(AM^{\dag}A^{\top})=\calR((P_{A1}\ P_{A2}))$. Since $\calR((P_{A1}\ P_{A2}))\cap \calR(P_{A3})=\{\mathbf{0}\}$, we obtain $Wz=\mathbf{0} \Rightarrow z=\mathbf{0}$, meaning that $\{\tilde{w}_j\}_{j=0}^{k}$ are independent. This completes the proof.
\end{proof}

\begin{proofof}{\Cref{thm:gGKB_terminate}}
	Suppose gGKB terminates at the $k_t$-th step.
	By \Cref{prop:gGKB1}, the rank of $\{u_i\}_{i=1}^{k_t}$ is $k_t$, implying $k_t\leq \mathrm{deg}_{A M^{\dag}A^{\top}}(b)=s$ by \Cref{lem:deg}. Then we show $k_t\geq s$. Notice from the relations \eqref{GKB_1} and \eqref{GKB_2} that
	\begin{align*}
		& \alpha_{1}\beta_{1}v_{1}=M^{\dag}A^{\top}b , \\
		& \alpha_{i+1}\beta_{i+1}v_{i+1} = M^{\dag}A^{\top}Av_{i}-(\alpha_{i}^{2}+\beta_{i+1}^{2})v_{i}-\alpha_{i}\beta_{i}v_{i-1}
	\end{align*}
	for $1\leq i\leq k_t$, where we have used 
	\begin{align*}
		M^{\dag}A^{\top}Av_{i} 
		&= \alpha_{i}M^{\dag}A^{\top}u_{i} + \beta_{i+1}M^{\dag}A^{\top}u_{i+1} \\
		&= \alpha_{i}(\alpha_{i}v_{i}+\beta_{i}v_{i+1}) + \beta_{i+1}(\alpha_{i+1}v_{i+1}+\beta_{i+1}v_{i}).
	\end{align*}
	Therefore, it follows that
	\begin{equation*}
		v_{i+1} = \frac{1}{\alpha_{i+1}\beta_{i+1}}\left(M^{\dag}A^{\top}Av_{i} - 
		(\alpha_{i}^2+\beta_{i+1}^2)v_{i}- \alpha_{i}\beta_{i}v_{i-1}\right)
	\end{equation*}
	for $1\leq i < k_t$. Combining with $v_1=\frac{1}{\alpha_1\beta_1}M^{\dag}A^{\top}b$, the above recursion leads to
	\begin{equation*}
		v_{k+1}=\sum_{i=0}^{k}\xi_i(M^{\dag}A^{\top}A)^{i}M^{\dag}A^{\top}b, \ \ \ 
		\xi_{k}=1/\Pi_{i=1}^{k+1}\alpha_i\beta_i\neq 0
	\end{equation*}
	for $1\leq k < k_t$. Since $\mathbf{0}=\alpha_{k_t+1}\beta_{k_t+1}v_{k_t+1}$ is a linear combination of $v_{k_t}$ and $v_{k_t-1}$ with nonzero coefficients, the above identity implies that $\alpha_{k_t+1}\beta_{k_t+1}v_{k_t+1}$ must be a linear combination of $\{(M^{\dag}A^{\top}A)^{i}M^{\dag}A^{\top}b\}_{i=0}^{k_t}$ with nonzero coefficients, thereby $\{(M^{\dag}A^{\top}A)^{i}M^{\dag}A^{\top}b\}_{i=0}^{k_t}$ is linear dependent. By \Cref{lem:deg}, it follows that $k_t\geq s$. Finally, we obtain $k_t=s$.
\end{proofof}

Just as the standard GKB can be employed to approximate extreme SVD components, we will utilize gGKB to approximate nontrivial extreme GSVD components.

%%------------------------------------------------------------
\section{GSVD computation by generalized Golub-Kahan bidiagonalization}\label{sec5}
We first show that \textsf{gGKB} can be used to approximate the SVE components. Then we use \Cref{thm:sve_gsvd1} and \Cref{thm:sve_gsvd2} to relate these approximations to the nontrivial GSVD components.

\subsection{Computing nontrivial GSVD components by gGKB}\label{subsec4.1}
Suppose \textsf{gGKB} does not terminate before the $k$-th step. Then the compact-form SVD of $B_k$ can be written as
\begin{equation}\label{svd_B}
  B_k = Y_{k}\Theta_kH_{k}^{\top}, \ \ \ 
  \Theta_k = \mathrm{diag}\left(\theta_{1}^{(k)},\dots,\theta_{k}^{(k)}\right), \ \ \
  \theta_{i}^{(k)}>\dots>\theta_{k}^{(k)}>0 ,
\end{equation}
where $Y_{k}=\left(y_{1}^{(k)},\dots,y_{k}^{(k)}\right)\in\mathbb{R}^{(k+1)\times k}$ and $H_{k}=\left(h_{1}^{(k)},\dots,h_{k}^{(k)}\right)\in\mathbb{R}^{k\times k}$ are two 2-orthonormal matrices. The approximation to the SVE triplet $\left(c_{i}, p_{A,i}, x_i\right)$ of $\calA$ is defined as 
\begin{equation}
	\left(\bar{c}_{i}^{(k)},\bar{p}_{A,i}^{(k)},\bar{x}_{i}^{(k)}\right):=\left(\theta_{i}^{(k)}, U_{k+1}y_{i}^{(k)},V_{k}h_{i}^{(k)}\right).
\end{equation}
To measure the quality of this approximation, we give the following result.

\begin{theorem}\label{thm:sve_appr}
  The approximate SVE triplet for $\calA$ satisfies
  \begin{subequations}
	\begin{align}
		\calA\bar{x}_{i}^{(k)}-\bar{c}_{i}^{(k)}\bar{p}_{A,i}^{(k)} &= 0 , \\
		\calA^{*}\bar{p}_{A,i}^{(k)}- \bar{c}_{i}^{(k)}\bar{x}_{i}^{(k)} &= \alpha_{k+1}v_{k+1}e_{k+1}^{\top}y_{i}^{(k)} .
	  \end{align}
  \end{subequations}
\end{theorem}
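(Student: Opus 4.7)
The proof is essentially a direct computation combining the matrix-form recursive relations \eqref{eq:GKB_matForm} of \textsf{gGKB} with the SVD \eqref{svd_B} of the bidiagonal matrix $B_k$. The key observations I would use are: (i) in the present setting $G=I_m$, so by the identification established in \eqref{adjoint} the adjoint acts as $\calA^{*}u = M^{\dagger}A^{\top}u$, while $\calA v = Av$; (ii) the SVD of $B_k$ gives $B_{k}h_{i}^{(k)}=\theta_{i}^{(k)}y_{i}^{(k)}$ and $B_{k}^{\top}y_{i}^{(k)}=\theta_{i}^{(k)}h_{i}^{(k)}$.

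For the first identity, the plan is to substitute $\bar{x}_{i}^{(k)}=V_{k}h_{i}^{(k)}$ into $\calA\bar{x}_{i}^{(k)}$, apply \eqref{GKB7} to rewrite $AV_{k}=U_{k+1}B_{k}$, then use the singular-vector relation for $B_k$ to get
\[
\calA\bar{x}_{i}^{(k)} = AV_{k}h_{i}^{(k)} = U_{k+1}B_{k}h_{i}^{(k)} = \theta_{i}^{(k)}U_{k+1}y_{i}^{(k)} = \bar{c}_{i}^{(k)}\bar{p}_{A,i}^{(k)}.
\]
This requires no more than chaining definitions.

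For the second identity, I would substitute $\bar{p}_{A,i}^{(k)}=U_{k+1}y_{i}^{(k)}$, apply \eqref{GKB8} to expand $M^{\dagger}A^{\top}U_{k+1}$, and then use $B_{k}^{\top}y_{i}^{(k)}=\theta_{i}^{(k)}h_{i}^{(k)}$:
\[
\calA^{*}\bar{p}_{A,i}^{(k)} = M^{\dagger}A^{\top}U_{k+1}y_{i}^{(k)} = V_{k}B_{k}^{\top}y_{i}^{(k)} + \alpha_{k+1}v_{k+1}e_{k+1}^{\top}y_{i}^{(k)} = \bar{c}_{i}^{(k)}\bar{x}_{i}^{(k)} + \alpha_{k+1}v_{k+1}e_{k+1}^{\top}y_{i}^{(k)}.
\]
Rearranging yields the claim.

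There is no real obstacle here; the statement is a standard Rayleigh--Ritz-type residual identity pulled over from the usual GKB/Lanczos bidiagonalization analysis, and the only subtlety worth flagging in the write-up is the specialization $G=I_m$ so that the formula $\calA^{*}u=M^{\dagger}A^{\top}u$ from \eqref{adjoint} applies verbatim and matches the $M^{\dagger}A^{\top}U_{k+1}$ appearing on the left of \eqref{GKB8}. Everything else is substitution.
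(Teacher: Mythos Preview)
Your proposal is correct and follows essentially the same argument as the paper: both use $\calA v=Av$ with \eqref{GKB7} and $B_{k}h_{i}^{(k)}=\theta_{i}^{(k)}y_{i}^{(k)}$ for the first identity, and $\calA^{*}u=M^{\dagger}A^{\top}u$ (via \eqref{adjoint}, $G=I_m$) together with \eqref{GKB8} and $B_{k}^{\top}y_{i}^{(k)}=\theta_{i}^{(k)}h_{i}^{(k)}$ for the second. There is nothing to add.
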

\begin{proof}
  Note that $\calA v=Av$. The first relation can be verified using \eqref{GKB7}:
  \begin{equation*}
    \calA\bar{v}_{i}^{(k)}-\bar{\sigma}_{i}^{(k)}\bar{u}_{i}^{(k)} 
    = AV_{k}h_{i}^{(k)} - \theta_{i}^{(k)}U_{k+1}y_{i}^{(k)}
    = U_{k+1}\left(B_{k}h_{i}^{(k)}-\theta_{i}^{(k)}y_{i}^{(k)} \right) = 0 .
  \end{equation*}
  For the second relation, using \eqref{adjoint} that $\calA^{*}u=M^{\dag}A^{\top}u$, we obtain from \eqref{GKB8} that
  \begin{align*}
    \calA^{*}\bar{p}_{A,i}^{(k)}- \bar{c}_{i}^{(k)}\bar{x}_{i}^{(k)}
    &= M^{\dag}A^{\top}U_{k+1}y_{i}^{(k)}-\theta_{i}^{(k)}V_{k}h_{i}^{(k)} \\
    &= \left(V_{k}B_{k}^{\top}+\alpha_{k+1}v_{k+1}e_{k+1}^{\top}\right)y_{i}^{(k)} -\theta_{i}^{(k)}V_{k}h_{i}^{(k)} \\
    &= V_k(B_{k}^{\top}y_{i}^{(k)}-\theta_{i}^{(k)}h_{i}^{(k)}) + \alpha_{k+1}v_{k+1}e_{k+1}^{\top}y_{i}^{(k)} \\
    &= \alpha_{k+1}v_{k+1}e_{k+1}^{\top}y_{i}^{(k)} .
  \end{align*}  
  The proof is completed.
\end{proof}

Therefore, the triplet $\left(\bar{\sigma}_{i}^{(k)},\bar{p}_{A,i}^{(k)},\bar{x}_{i}^{(k)}\right)$ can be accepted as a satisfied SVE triplet at the iteration that $\left|\alpha_{k+1}v_{k+1}e_{k+1}^{\top}y_{i}^{(k)}\right|$ is sufficiently small. Using the connection between the SVE of $\calA$ and the GSVD of $\{A,L\}$ revealed by \Cref{thm:sve_gsvd1}, the tuple 
\begin{equation}
	\left(\bar{c}_{i}^{(k)},\bar{s}_{i}^{(k)},\bar{p}_{A,i}^{(k)},\bar{x}_{i}^{(k)}\right):=\left(\theta_{i}^{(k)},(1-(\theta_{i}^{(k)})^2)^{1/2},U_{k+1}y_{i}^{(k)},V_{k}h_{i}^{(k)}\right)
\end{equation}
can be used as a good approximation to a GSVD component. To further measure the quality of this approximation, note from \eqref{GSV} that 
\begin{equation}\label{GSV4}
	s_{i}^{2}A^{\top}Ax_i = c_{i}^{2}L^{\top}Lx_i , \ \ \ 1 \leq i\leq r .
\end{equation}
This is a well-known basic relation for GSVD, which indicates that the nontrivial generalized eigenvalues of the generalized eigenvalue problem $A^{T}Ax = \lambda L^{T}Lx$ are $\{\gamma_{i}^2\}_{i=1}^{r}$ and the corresponding generalized eigenvectors are $\{x_{i}\}_{i=1}^{r}$ \cite[\S 8.7]{Golub2013}. Now we can give the following result.

\begin{theorem}\label{coro:gsvd_appr}
	The above approximate GSVD tuple for $\{A, L\}$ satisfies
	\begin{equation}\label{res3}
		(\bar{s}_{i}^{(k)})^{2}A^{\top}A\bar{x}_{i}^{(k)} - (\bar{c}_{i}^{(k)})^{2}L^{\top}L\bar{x}_{i}^{(k)} = \alpha_{k+1}\beta_{k+1}Mv_{k+1}e_{k}^{\top}h_{i}^{(k)} .
	\end{equation}
\end{theorem}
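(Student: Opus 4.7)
The plan is to eliminate $L^\top L$ in favor of $A^\top A$ and $M$, then push everything through the two matrix-form recursions of \textsf{gGKB} and the SVD of $B_k$.

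First I would rewrite the left-hand side using the identity $(\bar{s}_{i}^{(k)})^{2}+(\bar{c}_{i}^{(k)})^{2}=1$ together with $L^{\top}L = M - A^{\top}A$. This gives
\begin{equation*}
(\bar{s}_{i}^{(k)})^{2}A^{\top}A\bar{x}_{i}^{(k)} - (\bar{c}_{i}^{(k)})^{2}L^{\top}L\bar{x}_{i}^{(k)}
= A^{\top}A\bar{x}_{i}^{(k)} - (\bar{c}_{i}^{(k)})^{2}M\bar{x}_{i}^{(k)},
\end{equation*}
so it suffices to prove that $A^{\top}AV_{k}h_{i}^{(k)} - (\theta_{i}^{(k)})^{2}MV_{k}h_{i}^{(k)} = \alpha_{k+1}\beta_{k+1}Mv_{k+1}e_{k}^{\top}h_{i}^{(k)}$.

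Next I would use the \textsf{gGKB} matrix recursion \eqref{GKB8} (here $G=I_m$), namely $M^{\dag}A^{\top}U_{k+1} = V_{k}B_{k}^{\top}+\alpha_{k+1}v_{k+1}e_{k+1}^{\top}$, and multiply it on the left by $M$. The key observation is that $\calR(A^{\top})\subseteq\calR(A^{\top}A)\subseteq\calR(M)$, which combined with Lemma \ref{lem:pseudo} gives $MM^{\dag}A^{\top}=A^{\top}$. Therefore
\begin{equation*}
A^{\top}U_{k+1} = MV_{k}B_{k}^{\top}+\alpha_{k+1}Mv_{k+1}e_{k+1}^{\top}.
\end{equation*}
Now I would right-multiply by $B_{k}$ and combine with \eqref{GKB7}, noting that $e_{k+1}^{\top}B_{k}=\beta_{k+1}e_{k}^{\top}$ by the bidiagonal structure of $B_{k}$. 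This yields
\begin{equation*}
A^{\top}AV_{k} = A^{\top}U_{k+1}B_{k} = MV_{k}B_{k}^{\top}B_{k} + \alpha_{k+1}\beta_{k+1}Mv_{k+1}e_{k}^{\top}.
\end{equation*}

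Finally I apply both sides to $h_{i}^{(k)}$. From the SVD $B_{k}=Y_{k}\Theta_{k}H_{k}^{\top}$ in \eqref{svd_B}, the columns of $H_{k}$ are eigenvectors of $B_{k}^{\top}B_{k}$ with eigenvalues $(\theta_{i}^{(k)})^{2}$, so $B_{k}^{\top}B_{k}h_{i}^{(k)} = (\theta_{i}^{(k)})^{2}h_{i}^{(k)}$. Substituting and rearranging produces exactly \eqref{res3}. The only substantive step is the identity $MM^{\dag}A^{\top}=A^{\top}$; the rest is bookkeeping with the $\textsf{gGKB}$ recursion and the SVD of $B_{k}$.
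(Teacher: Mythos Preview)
Your proof is correct and follows essentially the same strategy as the paper: reduce the left-hand side to $A^{\top}A\bar{x}_{i}^{(k)} - (\theta_{i}^{(k)})^{2}M\bar{x}_{i}^{(k)}$ via $L^{\top}L=M-A^{\top}A$, then combine the two \textsf{gGKB} recursions \eqref{GKB7}--\eqref{GKB8} with the SVD of $B_k$. The only organizational difference is that the paper first applies $M^{\dag}$ to the whole expression (arguing via the GSVD that $A^{\top}A\,\calR(M)\subseteq\calR(M)$) and multiplies back by $M$ at the end, whereas you multiply \eqref{GKB8} by $M$ up front using the more elementary fact $\calR(A^{\top})\subseteq\calR(M)$; your route is slightly cleaner in that it never invokes the GSVD structure.
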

\begin{proof}
	First notice from \eqref{GSV4} that 
	\begin{align*}
		& A^{\top}Ax_i = (c_{i}^2+s_{i}^2)A^{\top}Ax_i=c_{i}^{2}(A^{\top}A+L^{\top}L)x_{i}=c_{i}^{2}Mx_{i}, \\
		& L^{\top}Lx_i = (c_{i}^2+s_{i}^2)L^{\top}Lx_i=s_{i}^{2}(A^{\top}A+L^{\top}L)x_{i}=s_{i}^{2}Mx_{i}
	\end{align*}
	for $1\leq i\leq r$. Since $\widetilde{X}_{1}=(x_{1},\dots,x_{r})$ is an $M$-orthonormal basis of $(\calR(M),\langle\cdot,\cdot\rangle_{M})$, it follows that $A^{\top}A\calR(M)\subseteq\calR(M)$ and $L^{\top}L\calR(M)\subseteq\calR(M)$. Therefore, we have 
	$A^{\top}A\bar{x}_{i}^{(k)}, L^{\top}L\bar{x}_{i}^{(k)} \in \calR(M)$ due to $\bar{x}_{i}^{(k)}=V_{k}h_{i}^{(k)}\in\calR(M)$. By \Cref{thm:sve_appr}, we have 
	\begin{align*}
		& \ \ \ \ M^{\dag}[(\bar{s}_{i}^{(k)})^{2}A^{\top}A\bar{x}_{i}^{(k)} - (\bar{c}_{i}^{(k)})^{2}L^{\top}L\bar{x}_{i}^{(k)} ] 
		= M^{\dag}[A^{\top}A\bar{x}_{i}^{(k)} - (\bar{c}_{i}^{(k)})^{2}M\bar{x}_{i}^{(k)} ] \\
		&= \theta_{i}^{(k)}M^{\dag}A^{\top}U_{k+1}y_{i}^{(k)} - (\theta_{i}^{(k)})^2V_{k}h_{i}^{(k)} \\
		&= \theta_{i}^{(k)}\left(V_{k}B_{k}^{\top}+\alpha_{k+1}v_{k+1}e_{k+1}^{\top}\right)y_{i}^{(k)}-(\theta_{i}^{(k)})^2V_{k}h_{i}^{(k)} \\
		&= \alpha_{k+1}\beta_{k+1}v_{k+1}e_{k}^{\top}h_{i}^{(k)},
	\end{align*}
	where we have used $B_{k}^{\top}y_{i}^{(k)}=\theta_{i}^{(k)}h_{i}^{(k)}$ and $B_{k}h_{i}^{(k)}=\theta_{i}^{(k)}y_{i}^{(k)}$. Multiplying the above equality by $M$ and using $\calP_{\calR(M)}=MM^{\dag}$, we finally obtain \eqref{res3}.
\end{proof}

Combining \Cref{thm:sve_appr,coro:gsvd_appr}, it is more proper to use the residual norm 
\begin{equation}\label{res}
	\|r_{i}^{(k)}\|_2 := \left(\|A\bar{x}_{i}^{(k)}-\bar{c}_{i}^{(k)}\bar{p}_{A,i}^{(k)}\|_{2}^2+\|(\bar{s}_{i}^{(k)})^{2}A^{\top}A\bar{x}_{i}^{(k)} - (\bar{c}_{i}^{(k)})^{2}L^{\top}L\bar{x}_{i}^{(k)}\|_{2}^2 \right)^{1/2}
\end{equation}
to measure the quality of the approximate GSVD components of $A$. Since $\|v_{k+1}\|_{M}=1$, it follows from \eqref{res3} that
\begin{equation}
	\|r_{i}^{(k)}\|_2/\|(A^{\top}, L^{\top})^{\top}\|_2 \leq \alpha_{k+1}\beta_{k+1}|e_{k}^{\top}h_{i}^{(k)}|,
\end{equation}
because $\|M\|_{2}^{1/2}=\|(A^{\top}, L^{\top})^{\top}\|_2$. The easily computed quantity $\alpha_{k+1}\beta_{k+1}|e_{k}^{\top}h_{i}^{(k)}|$ is an upper bound of the scaling-invariant relative residual norm $\|r_{i}^{(k)}\|_2/\|(A^{\top}, L^{\top})^{\top}\|_2$, which can be used in a stopping criterion. 

We present the pseudocode of the \textsf{gGKB}-based GSVD computation (computing the GSVD components of $A$) in \Cref{alg:GSVD}. We remark that in order to approximate the GSVD components of $L$, the \textsf{gGKB} of $\calL$ should be used; the spirit is the same as that for $\calA$ and we omit it.

\begin{algorithm}[htb]
	\caption{The \textsf{gGKB}-based GSVD computation (\textsf{gGKB\_GSVD})}\label{alg:GSVD}
 	\algorithmicrequire \ $A\in\mathbb{R}^{m\times n}$, $L\in\mathbb{R}^{p\times n}$, $\textrm{tol}>0$
	\begin{algorithmic}[1]
		\State Initialize: choose a nonzero $b\in\mathbb{R}^{m}$; form $M=A^{\top}A+L^{\top}L$
		\State Compute $\beta_1, \ \alpha_1, \ u_1,\ v_1$ by \textsf{gGKB} 
		\For {$i=1,2,\dots,k,$}
		\State Compute $\beta_{k+1}, \ \alpha_{k+1}, \ u_{k+1},\ v_{k+1}$ by \textsf{gGKB}; form $B_{k}, \ U_{k+1}$ and $V_{k}$
		\State Compute the SVD of $B_k$ as \eqref{svd_B}
		\If {$\alpha_{k+1}\beta_{k+1}|e_{k}^{\top}h_{i}^{(k)}|<\mathrm{tol}$}
			\State Compute $\left(\bar{c}_{i}^{(k)},\bar{s}_{i}^{(k)},\bar{p}_{A,i}^{(k)},\bar{x}_{i}^{(k)}\right):=\left(\theta_{i}^{(k)},(1-(\theta_{i}^{(k)})^2)^{\frac{1}{2}},U_{k+1}y_{i}^{(k)},V_{k}h_{i}^{(k)}\right)$
		\EndIf
		\EndFor
	\end{algorithmic}
 	\algorithmicensure \ Approximate GSVD components $\left(\bar{c}_{i}^{(k)},\bar{s}_{i}^{(k)},\bar{p}_{A,i}^{(k)},\bar{x}_{i}^{(k)}\right)$
\end{algorithm}

%--------------------------------------------------
\subsection{Convergence and accuracy}
We provide preliminary results about the convergence and accuracy of \textsf{gGKB\_GSVD} for GSVD computation. The following result demonstrates the good property of \textsf{gGKB\_GSVD} at the terminate step.

\begin{theorem}\label{thm:gsvd_exact}
	Following the notations and assumptions of \Cref{thm:gGKB_terminate}, then at the $k_t$-th step, the \textsf{gGKB\_GSVD} algorithm computes exactly $k_t$ GSVD components corresponding to the nonzero elements in $\{\calP_{\calG_1}b,\dots,\calP_{\calG_l}b\}$.
\end{theorem}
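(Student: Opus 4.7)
The plan is to exploit the termination condition $\alpha_{k_t+1}\beta_{k_t+1}=0$ to show that every Ritz-type tuple extracted from $B_{k_t}$ is an \emph{exact} SVE triple of $\calA$, and then to identify which GSVD components these triples correspond to by analysing the Krylov subspace generated by \textsf{gGKB}.

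First I would apply \Cref{thm:sve_appr} and \Cref{coro:gsvd_appr} at $k=k_t$. By the definition of $k_t$, at least one of $\alpha_{k_t+1}$ and $\beta_{k_t+1}$ vanishes, so both right-hand sides $\alpha_{k_t+1}v_{k_t+1}e_{k_t+1}^{\top}y_{i}^{(k_t)}$ and $\alpha_{k_t+1}\beta_{k_t+1}Mv_{k_t+1}e_{k_t}^{\top}h_{i}^{(k_t)}$ are identically $\mathbf{0}$ for every $i=1,\dots,k_t$. Consequently each tuple $(\bar{c}_{i}^{(k_t)},\bar{s}_{i}^{(k_t)},\bar{p}_{A,i}^{(k_t)},\bar{x}_{i}^{(k_t)})$ satisfies $\calA\bar{x}_{i}^{(k_t)}=\bar{c}_{i}^{(k_t)}\bar{p}_{A,i}^{(k_t)}$ and $\calA^{*}\bar{p}_{A,i}^{(k_t)}=\bar{c}_{i}^{(k_t)}\bar{x}_{i}^{(k_t)}$, with $\bar{x}_{i}^{(k_t)}\in\calR(M)$ because $v_i\in\calR(M)$ by \Cref{prop:gGKB1}. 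By definition of SVE this means we have produced exact SVE triples of $\calA$; \Cref{thm:sve_gsvd1} then translates them into exact nontrivial GSVD components of $\{A,L\}$ via $\bar{s}_{i}^{(k_t)}=(1-(\bar{c}_{i}^{(k_t)})^{2})^{1/2}$.

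Next I would count and identify these components. \Cref{thm:gGKB_terminate} already gives $k_t=s$, where $s$ is the number of nonzero elements of $\{\calP_{\calG_1}b,\dots,\calP_{\calG_l}b\}$, so the SVD of $B_{k_t}$ produces exactly $s$ triples and therefore exactly $s$ exact GSVD components. To pin down \emph{which} $c$-values appear, I would use the Krylov description from \Cref{prop:gGKB1}: with $G=I_m$ we have
\begin{equation*}
    \mathrm{span}(U_{k_t})=\mathcal{K}_{k_t}(AM^{\dag}A^{\top},b),
\end{equation*}
and the computation inside the proof of \Cref{lem:deg} already showed that this span equals $\mathrm{span}\{g_i:\calP_{\calG_i}b\neq\mathbf{0}\}$, where $g_i=\calP_{\calG_i}b/\|\calP_{\calG_i}b\|_2$. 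This subspace is invariant under $AM^{\dag}A^{\top}$, and the eigenvalues of $AM^{\dag}A^{\top}$ on it are precisely the $s$ numbers $\lambda_i=c^{2}_{\cdot}$ associated with the nonzero $\calP_{\calG_i}b$. Combined with the relations \eqref{GKB7} and \eqref{GKB8}, which give $B_{k_t}B_{k_t}^{\top}=U_{k_t+1}^{\top}(AM^{\dag}A^{\top})U_{k_t+1}$ up to the now-vanishing residual term, this forces the singular values of $B_{k_t}$, squared, to be exactly those eigenvalues. Hence the $\bar{c}_{i}^{(k_t)}$ are exactly the distinct $c$-values coming from the $\calG_i$ that contribute to $b$, as claimed.

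The main obstacle is justifying cleanly that the Ritz values of $AM^{\dag}A^{\top}$ against the Krylov subspace are true eigenvalues rather than mere approximants. I would handle it via the invariance argument above: once $\alpha_{k_t+1}\beta_{k_t+1}=0$, the trailing correction term in \eqref{GKB8} is killed, so $M^{\dag}A^{\top}U_{k_t+1}=V_{k_t}B_{k_t}^{\top}$ holds exactly, and then (\ref{GKB7}) together with this identity yields the exact similarity $B_{k_t}B_{k_t}^{\top}=U_{k_t+1}^{\top}AM^{\dag}A^{\top}U_{k_t+1}$. This is the only delicate point; everything else reduces to bookkeeping via \Cref{thm:sve_gsvd1}.
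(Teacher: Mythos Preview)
Your proposal is correct and follows essentially the same route as the paper: first invoke \Cref{thm:sve_appr} and \Cref{coro:gsvd_appr} at $k=k_t$ so the vanishing of $\alpha_{k_t+1}\beta_{k_t+1}$ (together with the convention $v_{k_t+1}=\mathbf{0}$ when $\beta_{k_t+1}=0$) kills the residuals, then use the fact from the proof of \Cref{lem:deg} that $\mathcal{K}_{k_t}(AM^{\dag}A^{\top},b)=\mathrm{span}\{g_i:\calP_{\calG_i}b\neq\mathbf{0}\}$ to identify the components. The only cosmetic difference is that the paper finishes by checking $\bar{p}_{A,i}^{(k_t)}\in\calG_1\oplus\cdots\oplus\calG_s$ directly and then uses distinctness of the $\theta_i^{(k_t)}$ to separate them, whereas you argue via the invariant-subspace similarity to pin down the $\theta_i^{(k_t)}$ first; both amount to the same invariance observation.
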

\begin{proof}
	By \Cref{thm:sve_appr,coro:gsvd_appr}, at the terminate step of \textsf{gGKB}, it computes exact the SVE components of $\calA$, which are also the exact GSVD components of $\{A, L\}$ by \Cref{thm:sve_gsvd1}. Following the notations in the proof of \Cref{lem:deg}, we need to prove that the $s$ vectors $\bar{p}_{A,i}^{(s)}$ belong separately to the invariant subspaces $\calG_1,\dots,\calG_s$. Since $\theta_{i}^{(s)}>0$ have different values and $\calG_i$ are mutually 2-orthogonal, these $\bar{p}_{A,i}^{(s)}$ must belong to different invariant subspaces. Therefore, we only need to prove $\calP_{\calG}\bar{p}_{A,i}^{(s)}=\bar{p}_{A,i}^{(s)}$ for each $1\leq i\leq s$, where $\calG=\calG_1\oplus\cdots\oplus\calG_s$. From the proof of \Cref{lem:deg}, we have $\bar{p}_{A,i}^{(s)}\in\mathcal{K}_{s}(AM^{\dag}A^{\top},b)=\mathrm{span}\{w_i\}_{i=0}^{s-1}$, and
	\begin{equation*}
		\widetilde{W}_s:=(w_{0},\dots,w_{s-1}) = \widetilde{G}_s\begin{pmatrix}
			g_{1}^{\top}b & & \\
			 & \ddots & \\
			 & & g_{1}^{\top}b
		\end{pmatrix}
		\begin{pmatrix}
			1 & \lambda_{1} & \cdots & \lambda_{1}^{s-1} \\
			1 & \lambda_{2} & \cdots & \lambda_{2}^{s-1} \\
			\vdots & \vdots & \cdots & \vdots \\
			1 & \lambda_{s} & \cdots & \lambda_{s}^{s-1}
			\end{pmatrix} =: \widetilde{G}_s \Lambda_s T_s ,
	\end{equation*} 
	where $\widetilde{G}_s=(g_{1},\dots,g_s)$. Since $g_{i}^{\top}b\neq0$ and $T_s$ is nonsingular, it follows that $\calR(\widetilde{W}_s)=\calR(\widetilde{G}_s)$. Thus, we can write $\bar{p}_{A,i}^{(s)}$ as $\bar{p}_{A,i}^{(s)}=\widetilde{G}_s z$ with a nonzero $z\in\mathbb{R}^{s}$. Now we immediately obtain
	\begin{equation*}
		\calP_{\calG}\bar{p}_{A,i}^{(s)} = \sum_{i=1}^{s}\calP_{\calG_i}\bar{p}_{A,i}^{(s)}
		= \sum_{i=1}^{s}G_{i}G_{i}^{\top}(\widetilde{G}_s z) = (\widetilde{G}_s\widetilde{G}_{s}^{\top})\widetilde{G}_s z 
		= \bar{p}_{A,i}^{(s)} ,
	\end{equation*} 
	which is the desired result.
\end{proof}

To investigate the convergence behavior of the approximations, we give the following result to describe the convergence speed of the Ritz values $\theta_{i}^{(k)}$.
\begin{theorem}\label{thm:conv}
	For any $1\leq i\leq q_1+q_2$, let
	\begin{equation}\label{ang}
		\theta_i = \arccos\frac{|c_{i}p_{A,i}^{\top}b|}{\|\Sigma_{A}^{\top}P_{A}^{\top}b\|_2} .
	\end{equation}
	Then at the $k$-th iteration of \textsf{gGKB\_GSVD}, it holds
	\begin{equation}\label{conv_er}
		0\leq c_{i}^2-(\theta_{i}^{(k)})^2 \leq (c_{1}^2-c_{r}^2)\left(\frac{\kappa_{i}^{(k)}\tan\theta_i}{C_{k-i}(1+2\gamma_i)} \right),
	\end{equation}
	where $C_j(t)$ is the $j$-th Chebyshev polynomial
	\begin{equation*}
		C_{j}(t) = \frac{1}{2}\left[(t+\sqrt{t^2-1})^k+(t+\sqrt{t^2-1})^{-k} \right], \ \ |t|\geq 1 
	\end{equation*}
	and 
	\[\gamma_i = \frac{c_{i}^2-c_{i+1}^2}{c_{i+1}^2-c_{r}^2}, \ \ \
	\kappa_{i}^{(k)} = \prod_{j=1}^{i-1}\frac{(\theta_{j}^{(k)})^2-c_{r}^2}{(\theta_{j}^{(k)})^2-c_{i}^2} \ (i>1), \ \ \
	\kappa_{1}^{(k)} = 1 \ (i=1) .\]
\end{theorem}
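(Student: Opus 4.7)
The plan is to recognize the \textsf{gGKB} process as a symmetric Lanczos procedure applied to the self-adjoint operator $\calA^{*}\calA$ acting on $(\calR(M),\langle\cdot,\cdot\rangle_{M})$, and then transport the classical Kaniel--Saad convergence bound for Lanczos Ritz values through the isometry $\pi_{1}$ of diagram \eqref{commu_dig1}. The first step is to identify the Ritz values: from $AV_{k}=U_{k+1}B_{k}$ and $U_{k+1}^{\top}U_{k+1}=I_{k+1}$, we get $V_{k}^{\top}A^{\top}AV_{k}=B_{k}^{\top}B_{k}$. Because $\calR(A^{\top})\subseteq\calR(M)$ implies $MM^{\dag}A^{\top}=A^{\top}$, this equals $V_{k}^{\top}M(M^{\dag}A^{\top}AV_{k})$, so $B_{k}^{\top}B_{k}$ is the matrix of $\calA^{*}\calA$ restricted to $\mathcal{K}_{k}=\mathrm{span}(V_{k})$ in the $M$-orthonormal basis $V_{k}$. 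By \Cref{thm:sve_gsvd1} the eigenvalues of $\calA^{*}\calA$ are $\{c_{i}^{2}\}_{i=1}^{r}$ with eigenvectors $\{x_{i}\}_{i=1}^{r}$, so $(\theta_{i}^{(k)})^{2}$ is the $i$-th Rayleigh--Ritz value of $\calA^{*}\calA$ on $\mathcal{K}_{k}$; Cauchy interlacing immediately delivers the lower bound $0\leq c_{i}^{2}-(\theta_{i}^{(k)})^{2}$.

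Next I would apply Saad's bound. Via $\pi_{1}$, the Lanczos process on $\calA^{*}\calA$ with starting vector $v_{1}$ in the $M$-inner product is isometrically equivalent to standard symmetric Lanczos run on $W_{r}^{\top}A^{\top}AW_{r}\in\mathbb{R}^{r\times r}$, whose spectrum is exactly $\{c_{i}^{2}\}_{i=1}^{r}$. The classical Kaniel--Saad estimate (see e.g.\ \cite{Bai2000}) therefore transfers verbatim to give the stated upper bound, with $\gamma_{i}$ and $\kappa_{i}^{(k)}$ as in the statement and the starting angle characterized by $\cos\theta_{i}=|\langle v_{1},x_{i}\rangle_{M}|$ (recalling $\|v_{1}\|_{M}=\|x_{i}\|_{M}=1$).

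The remaining computation identifies the starting angle explicitly. Since $\alpha_{1}\beta_{1}v_{1}=M^{\dag}A^{\top}b$, the identity $x_{i}^{\top}MM^{\dag}A^{\top}b=x_{i}^{\top}A^{\top}b=c_{i}p_{A,i}^{\top}b$ (using $Ax_{i}=c_{i}p_{A,i}$) yields $\langle v_{1},x_{i}\rangle_{M}=c_{i}p_{A,i}^{\top}b/(\alpha_{1}\beta_{1})$. For the normalization, using $AM^{\dag}A^{\top}=P_{A}\Sigma_{A}\Sigma_{A}^{\top}P_{A}^{\top}$ from the proof of \Cref{lem:deg},
\begin{equation*}
    (\alpha_{1}\beta_{1})^{2}=\|M^{\dag}A^{\top}b\|_{M}^{2}=b^{\top}AM^{\dag}A^{\top}b=\|\Sigma_{A}^{\top}P_{A}^{\top}b\|_{2}^{2},
\end{equation*}
so that $\cos\theta_{i}=|c_{i}p_{A,i}^{\top}b|/\|\Sigma_{A}^{\top}P_{A}^{\top}b\|_{2}$, matching \eqref{ang}.

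The conceptual crux, and the place I would be most careful, is the isometric transport: justifying that the Kaniel--Saad bound, normally stated for symmetric Lanczos in the Euclidean inner product, applies verbatim to the weighted-inner-product Lanczos arising from \textsf{gGKB}. Once the equivalence with ordinary Lanczos on $W_{r}^{\top}A^{\top}AW_{r}$ is in place, the argument reduces to the classical Chebyshev polynomial approximation on the spectrum $\{c_{i}^{2}\}$ of $\calA^{*}\calA$, and the GSVD structure from \Cref{thm:sve_gsvd1} enters only in the final normalization that produces the closed form \eqref{ang} for the starting angle.
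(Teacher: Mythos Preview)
Your proposal is correct and follows essentially the same route as the paper: reduce \textsf{gGKB} to a standard symmetric Lanczos process on an $r\times r$ matrix with eigenvalues $\{c_i^2\}$, then invoke the Kaniel--Saad bound. The only cosmetic difference is that the paper fixes the particular $M$-orthonormal basis $W_r=\widetilde{X}_1$, so the coordinate matrix becomes $(A\widetilde{X}_1)^{\top}A\widetilde{X}_1=\Sigma_A^{\top}\Sigma_A$ with eigenvectors $e_i$ and Lanczos starting vector $\Sigma_A^{\top}P_A^{\top}b$, from which the angle \eqref{ang} is read off directly; you instead keep $W_r$ generic and compute the same angle via $\langle v_1,x_i\rangle_M$ and the identity $(\alpha_1\beta_1)^2=b^{\top}AM^{\dag}A^{\top}b=\|\Sigma_A^{\top}P_A^{\top}b\|_2^2$.
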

\begin{proof}
	Using the relations \eqref{GKB_3} with $G=Z=I_m$ and $W_{r}=\widetilde{X}_1$, it follows that the coordinate representation of \textsf{gGKB} is the standard GKB of $A\widetilde{X}_1$ with starting vector $b$. This GKB process is equivalent to the symmetric Lanczos process of $(A\widetilde{X}_1)^{\top}A\widetilde{X}_1\in\mathbb{R}^{r\times r}$ with starting vector $(A\widetilde{X}_1)^{\top}b$, which generates 2-orthogonal vectors $\{u_i\}_{i=1}^{k}$ and the symmetric tridiagonal matrix $B_{k}^{\top}B_{k}$; see e.g. \cite{larsen1998lanczos}. Since $A\widetilde{X}_1=P_{A}\Sigma_{A}$, it follows that $(A\widetilde{X}_1)^{\top}A\widetilde{X}_1=I_{r}(\Sigma_{A}^{\top}\Sigma_{A})I_{r}^{\top}$ is the eigenvalue decomposition of $(A\widetilde{X}_1)^{\top}A\widetilde{X}_1$, and $(A\widetilde{X}_1)^{\top}b=\Sigma_{A}^{\top}P_{A}^{\top}b$. Since the $i$-th eigenvector of $(A\widetilde{X}_1)^{\top}A\widetilde{X}_1$ is $e_i$ and
	\begin{equation*}
		(A\widetilde{X}_1)^{\top}b=\Sigma_{A}^{\top}P_{A}^{\top}b = 
		\begin{pmatrix}
			(P_{A1}^{\top}b)^{\top} & (P_{A2}^{\top}b)^{\top} & \mathbf{0}
		\end{pmatrix}^{\top} ,
	\end{equation*}
	the angle between $(A\widetilde{X}_1)^{\top}b$ and $e_i$ for $q_1+q_2+1\leq i\leq r$ is $\pi/2$, and for $1\leq i\leq q_1+q_2$ the angle is expressed as \eqref{ang}. Notice that the eigenvalues of $B_{k}^{\top}B_{k}$ are $(\theta_{i}^{(k)})^2$. Using the convergence theory of the symmetric Lanczos process (see e.g. \cite[Theorem 6.4]{saad2011numerical}), we immediately obtain \eqref{conv_er}.
\end{proof}

\Cref{thm:conv} indicates that the convergence rate of $\theta_{i}^{(k)}$ primarily depends on two factors: the clossness between $b$ and the corresponding vector $p_{A,i}$ and the degree of separation of $\theta_{i}^{(k)}$ from others. Therefore, usually we can expect rapid convergence to the extreme and well-separated positive $c_i$. Note again that the approximations will not converge to the GSVD components corresponding to those zero $c_{i}$, since the angle between $(A\widetilde{X}_1)^{\top}b$ and $e_i$ is $\pi/2$ for $q_1+q_2+1\leq i\leq r$.
The convergence behavior of $\bar{p}_{A,i}^{(k)}$ and $\bar{x}_{i}^{(k)}$ can also be described similarly based on the convergence theory of the symmetric Lanczos process, but the mathematical expressions are more complex. Interested readers can refer to \cite[\S 6.6]{saad2011numerical}

We remark that all the aforementioned results are derived for the \textsf{gGKB} with exact computations., i.e. we do not take into account rounding errors and computational errors arising from iteratively solving \eqref{ls_gGKB}. In the presence of rounding errors, the Lanczos-type iterative process behaves very differently from that in exact arithmetic. One well-known result is that the orthogonality of $u_i$ and $v_i$ will gradually lost, which leads to a delay of convergence of approximations and the appearance of spurious convergent quantities \cite{larsen1998lanczos}. Also, the inaccurate computation of $M^{\dag}\bar{s}$ may affect the final accuracy of the approximations. 
% Several similar issues have been analyzed in detail for the JBD-based GSVD method; see \cite{jiali2021,jia2023joint,li2024joint}. 
These issues for \textsf{gGKB\_GSVD} will be addressed in future work. We will demonstrate several of them in the following numerical experiments.

%%---------------------------------------------------------------------------------
\section{Experimental results}\label{sec6}
We report some experimental results to demonstrate the performance of \textsf{gGKB\_GSVD} for computing nontrivial extreme GSVD components. All the experiments are performed in MATLAB R2023b using double precision. The codes are available at \url{https://github.com/Machealb/gsvd_iter}. For the starting vector of \textsf{gGKB} for $A$ and $L$, we use the random vector $b=\texttt{randn(m,1)}$ and $b=\texttt{randn(p,1)}$ with random seed $\texttt{rng(2024)}$, respectively.

%------------------------------
\paragraph*{Example 1}
The matrix pair $\{A, L\}$ is constructed as follows. Set $m=n=p=1000$. Let $C_A=\mathrm{diag}(\{c_i\}_{i=1}^{n})$ with $c(1)=1, c(2)=0.95, c(3)=0.90$, $c(4:n-6)=\texttt{linspace(0.88,0.12,n-6)}$ and $c(n-2)=0.1, c(n-1)=0.05, c(n)=0.01$, and let $S_L=\mathrm{diag}(\{s_i\}_{i=1}^{n})$ with $s_i=(1-c_{i}^2)^{1/2}$. Then let $W$ be an orthogonal matrix by letting $W=\texttt{gallery(`orthog',n,2)}$, and $D=\texttt{diag(linspace(1,100,n))}$. Finally let $A=C_AW^{\top}D$ and $L=S_LW^{\top}D$. By the construction, $\{A,L\}$ is a regular matrix pair, and the $i$-th generalized singular value of $\{A, L\}$ are $c_i/s_i$, where the corresponding generalized singular vectors are the $i$-th columns of $I_n$, $I_n$ and $D^{-1}W$.

\begin{figure}[!htbp]
	\centering
	\subfloat
	{\label{fig:1a}\includegraphics[width=0.48\textwidth]{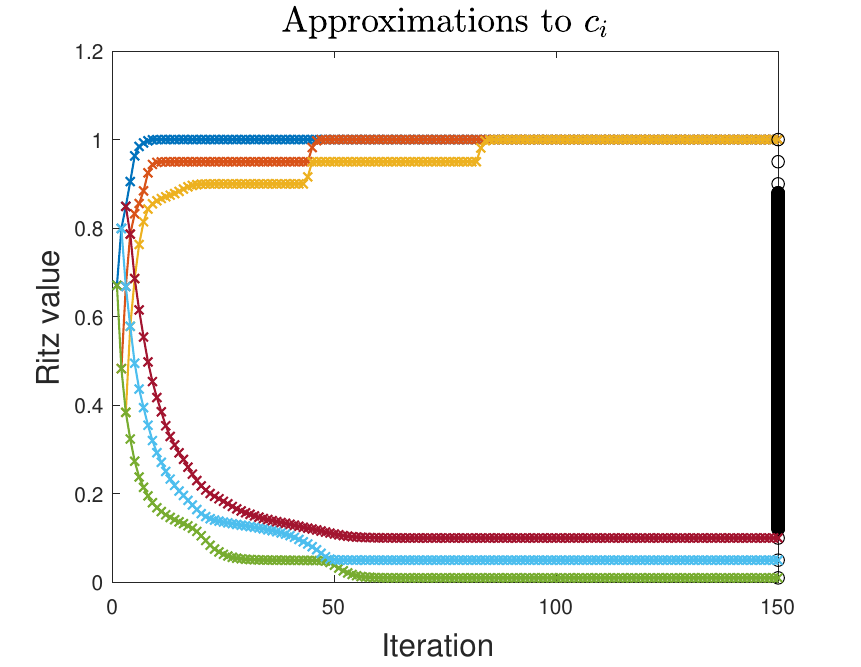}}\hspace{-0.0mm}
	\subfloat
	{\label{fig:1b}\includegraphics[width=0.48\textwidth]{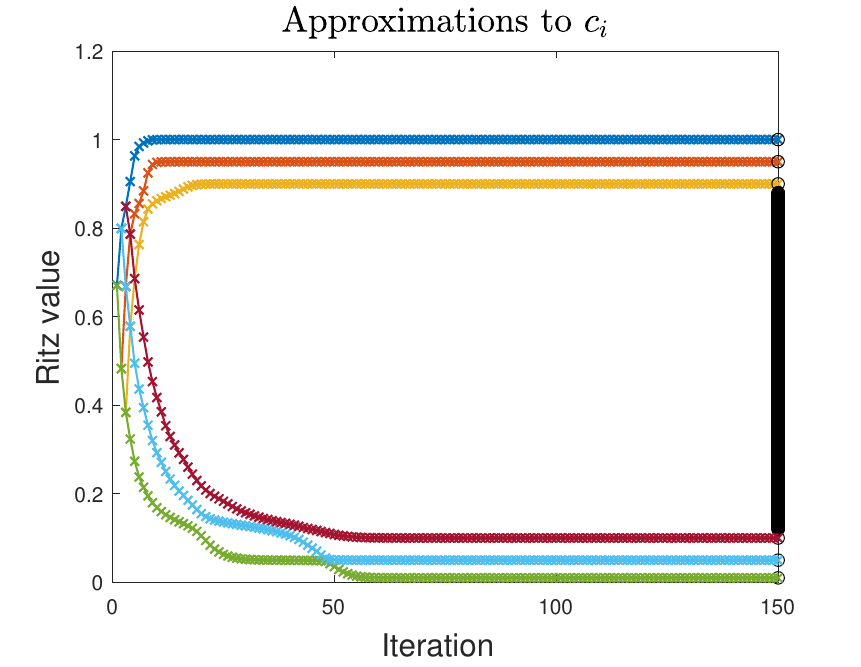}}
	\vspace{-2mm}
	\subfloat 
	{\label{fig:1c}\includegraphics[width=0.48\textwidth]{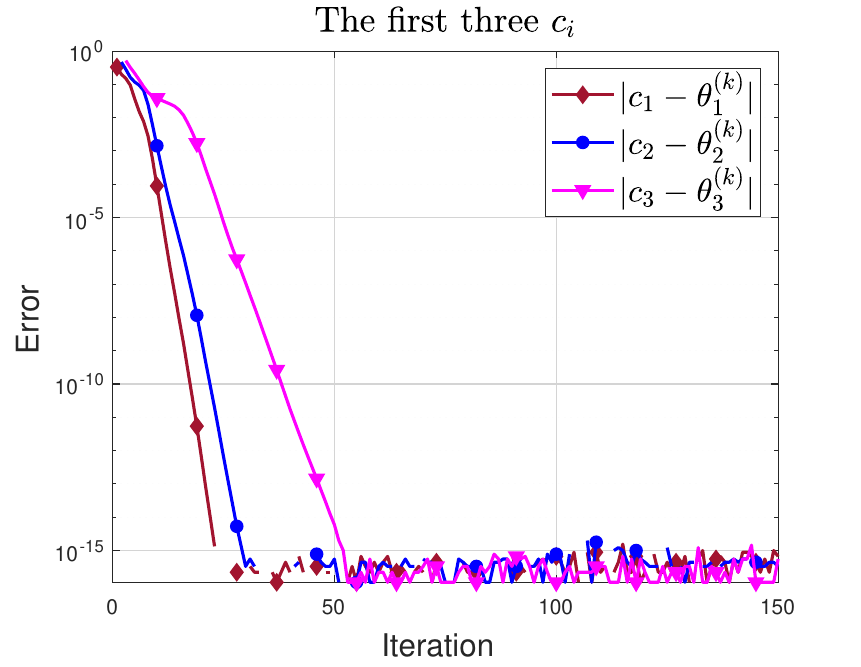}}\hspace{-0.0mm}
	\subfloat
	{\label{fig:1d}\includegraphics[width=0.50\textwidth]{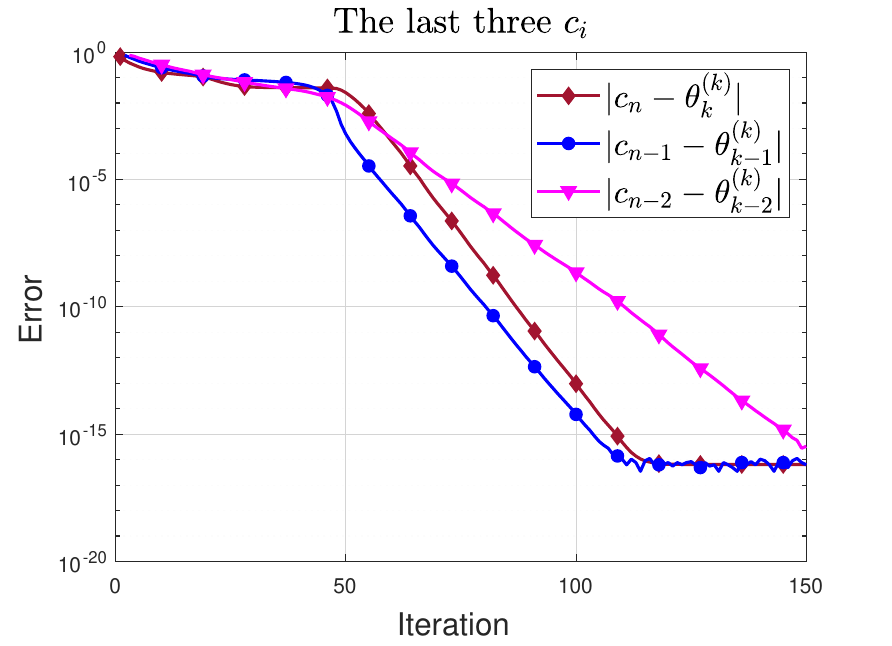}}
	\vspace{-2mm}
	\caption{Convergence and accuracy of approximations to $c_i$. Top: convergence of Ritz values $\theta_{i}^{(k)}$ to largest/smallest $c_i$ by \textsf{gGKB\_GSVD} without reorthogonalization (left) and with full reorthogonalization (right). Bottom: error curves (full reorthogonalization).}
	\label{fig1}
\end{figure}

In this experiment, we use \textsf{gGKB\_GSVD} to compute several largest and smallest generalized singular values, and show the convergence behavior of Ritz values $\theta_{i}^{(k)}$ where \textsf{gGKB} is performed with and without reorthogonalization. This is a small-scale problem, thereby we directly compute $M^{-1}$ for computing $M^{-1}\bar{s}$ at each iteration of \textsf{gGKB}. \Cref{fig1} shows the convergence of the first three largest and smallest Ritz values, where in the top two subfigures the right vertical lines indicate
the values of $c_i$. There are four findings. (1) If no reorthogonalization is used for \textsf{gGKB}, then as the iteration proceeds, the converged Ritz values may suddenly jump up (also may jump down for converging to those smallest $c_i$) to become a ghost and then converge to the next larger $c_i$; this phenomenon leads to the appearance of spurious copies of computed $c_i$. (2) If \textsf{gGKB} is performed with full reorthogonalization, the convergence of the Ritz values remains regular, and the first three largest and smallest Ritz values converge to the first three largest and smallest $c_i$, respectively. (3) The final accuracy of the approximated $c_i$ is around $\mathcal{O}(\mathbf{u})$, where $\mathbf{u}=2^{-53}\approx10^{-16}$ is the roundoff unit of double precision. (4) The convergence to those largest $c_i$ is faster than the convergence to those smallest $c_i$; also, the convergence to $c_1, c_2$ and $c_n, c_{n-1}$ is faster than that to $c_3$ and $c_{n-2}$. This is because $c_1, c_2$ are more well-separated from others than $c_3$; the same reason applies to $c_n, c_{n-1}$ and $c_{n-2}$.

\begin{figure}[!htbp]
	\centering
	\subfloat
	{\label{fig:2a}\includegraphics[width=0.48\textwidth]{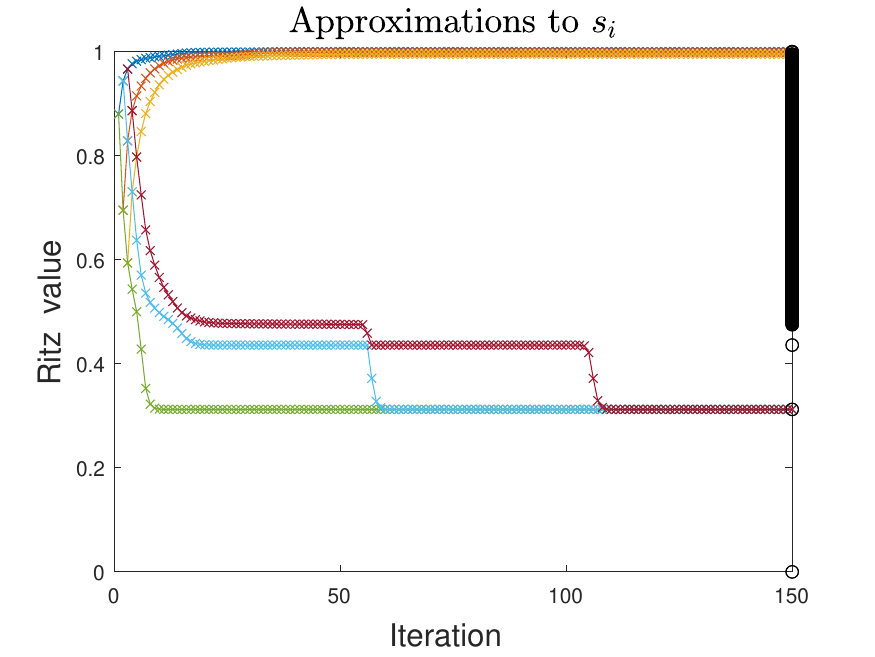}}\hspace{-0.0mm}
	\subfloat
	{\label{fig:2b}\includegraphics[width=0.48\textwidth]{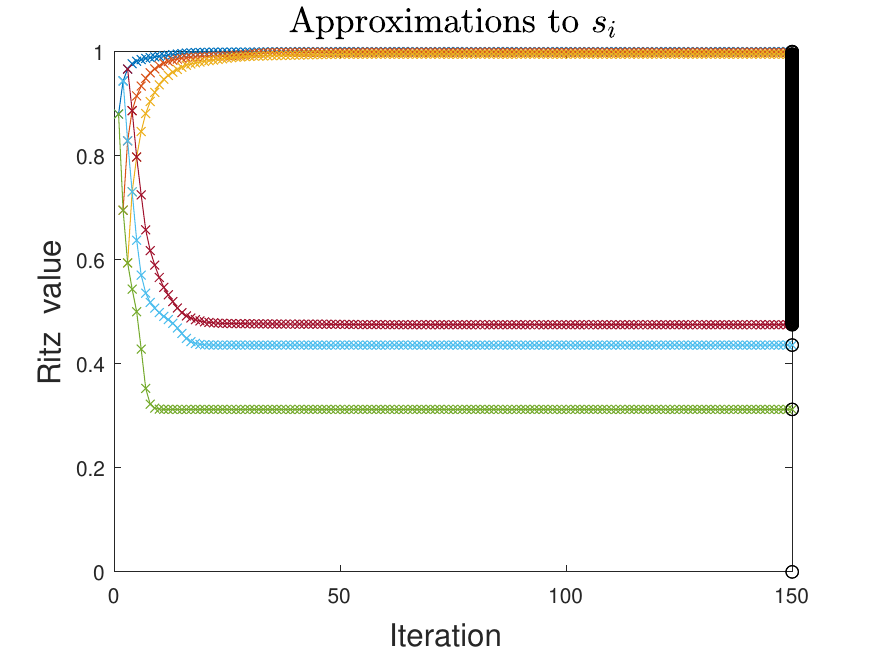}}
	\vspace{-2mm}
	\subfloat 
	{\label{fig:2c}\includegraphics[width=0.48\textwidth]{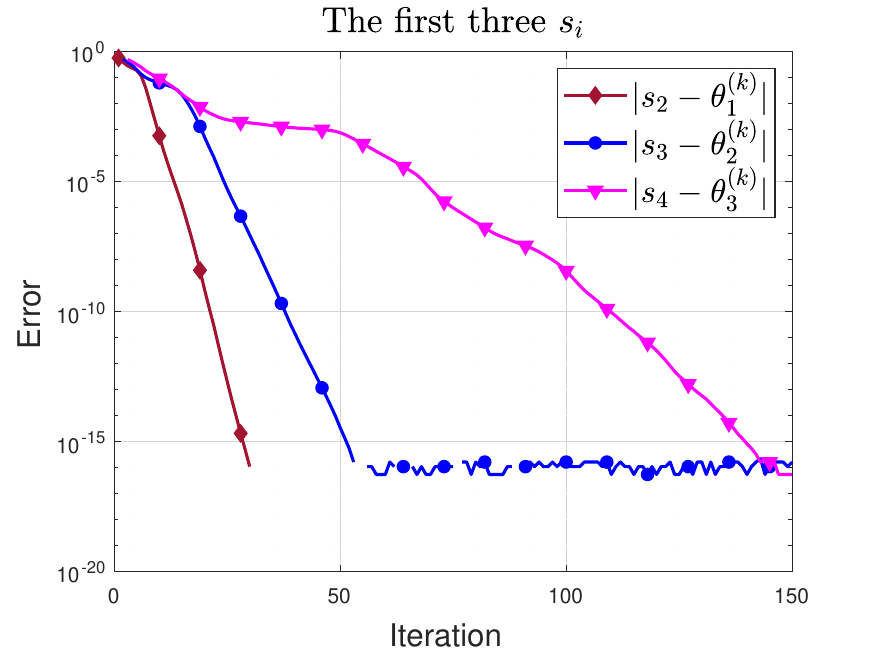}}\hspace{-0.0mm}
	\subfloat
	{\label{fig:2d}\includegraphics[width=0.48\textwidth]{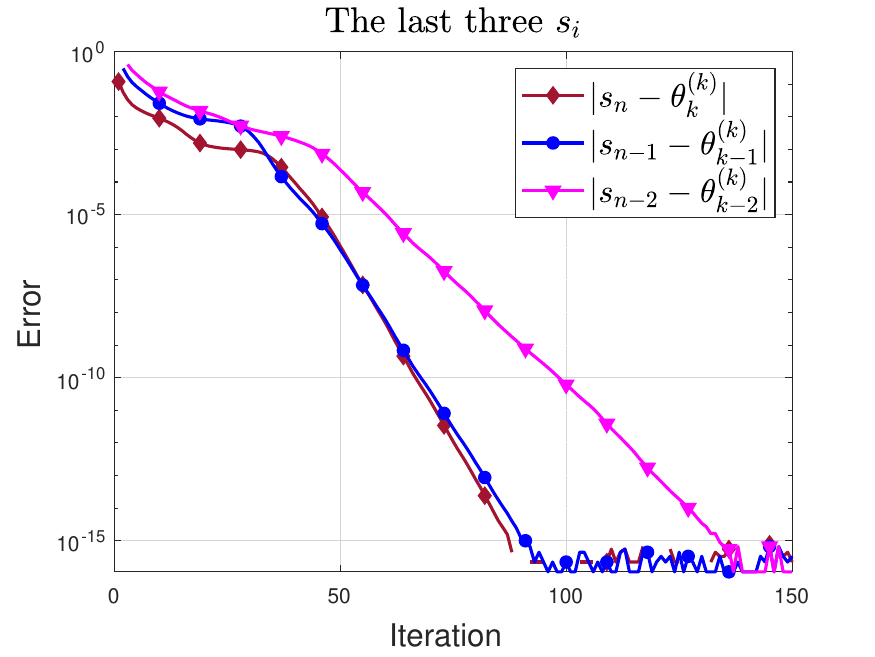}}
	\vspace{-2mm}
	\caption{Convergence and accuracy of approximations to $s_i$. Top: convergence of Ritz values $\theta_{i}^{(k)}$ to largest/smallest $s_i$ by \textsf{gGKB\_GSVD} without reorthogonalization (left) and with full reorthogonalization (right). Bottom: error curves (full reorthogonalization).}
	\label{fig2}
\end{figure}

We also test using the \textsf{gGKB} of $\calL$ to approximate several largest and smallest $s_i$. The convergence behavior of the Ritz values and error curves are plotted in \Cref{fig2}. In addition to the findings closely resembling those depicted in \Cref{fig1}, there are two additional insights. First, we find that the first three smallest Ritz values converge to $s_2,s_3,s_4$ instead of $s_1,s_2,s_3$. This is because to $s_1=0$, which can not be converged upon by Ritz values, as revealed by \Cref{thm:gsvd_exact} and \Cref{thm:conv}. Therefore, we should use \textsf{gGKB} of $\calA$ to compute the generalized singular values with value $\infty$. Second, we find from the bottom two subfigures that those smallest $s_i$ can be approximated more quickly than the largest ones, due to their well-separated locations. Notice that those smallest $s_i$ correspond to the largest generalized singular values. Therefore, for computing GSVD components corresponding to those smallest generalized singular values, using \textsf{gGKB\_GSVD} based on the \textsf{gGKB} of $\calL$ may be a better choice.

%------------------------------
\paragraph*{Example 2}
The matrix $A$ named {\sf well1850} is taken from the SuiteSparse matrix collection \cite{Davis2011}, and the matrix $L$ is set as 
\begin{equation*}
	L=\begin{pmatrix}
		1.1 & -1 &  &  &  \\
		& 1.1 & -1 &  &  \\
		&  & \ddots & \ddots &  \\
		&  &  & 1.1  & -1 \\
	\end{pmatrix} \in \mathbb{R}^{(n-1)\times n}.
\end{equation*}
This is a regular matrix pair. We use the Matlab build-in function \texttt{gsvd.m} to compute the full GSVD of $\{A, L\}$ as the baseline of comparison. 

\begin{figure}[htbp]
	\centering
	\subfloat
	{\label{fig:3a}\includegraphics[width=0.48\textwidth]{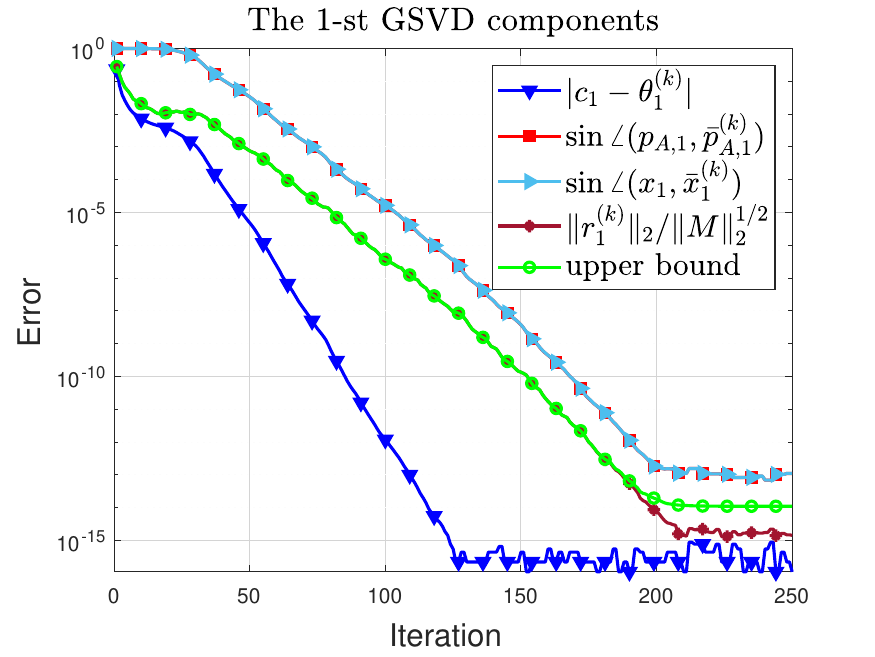}}\hspace{-0.0mm}
	\subfloat
	{\label{fig:3b}\includegraphics[width=0.49\textwidth]{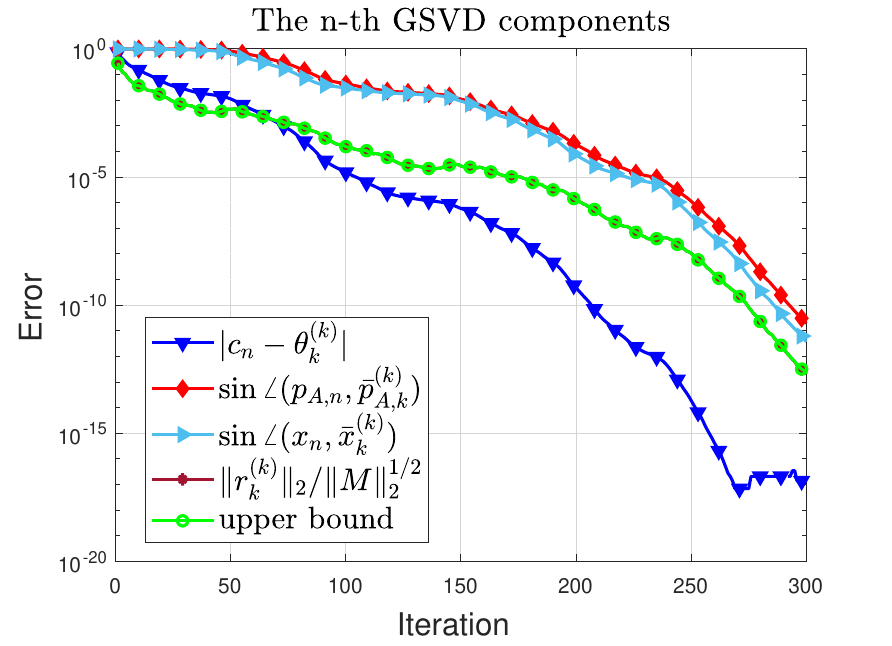}}
	\vspace{-2mm}
	\caption{Error curves of the approximate GSVD components by \textsf{gGKB\_GSVD} and relative residual norm with its upper bound. Left: approximations to the 1-st GSVD components. Right: approximations to the n-th GSVD components.}
	\label{fig3}
\end{figure}

In this experiment, we test the performance of \textsf{gGKB\_GSVD} for computing the first and $n$-th GSVD components of $A$ for a matrix pair with nonsquare matrices. We only show the results for the \textsf{gGKB} of $\calA$ and omit the similar results for the \textsf{gGKB} of $\calL$. Full reorthogonalization is used and $M^{-1}$ is computed directly. The errors for the approximated generalized singular vectors are measured by $\sin\angle(x, y)$ between two vectors. We also plot the variation of the relative residual norm and its upper bound $\alpha_{k+1}\beta_{k+1}|e_{k}^{\top}h_{i}^{(k)}|$. \Cref{fig3} shows that \textsf{gGKB\_GSVD} can approximate very well the two group extreme GSVD components, with final accuracy around $\mathcal{O}(\mathbf{u})$. The upper bound $\alpha_{k+1}\beta_{k+1}|e_{k}^{\top}h_{i}^{(k)}|$ exhibits a nearly identical decreasing trend as the relative residual norm. Therefore, it is a highly suitable quantity to be employed in the stopping criterion. We also observe that the convergence to the first GSVD components is faster than the convergence to the and $n$-th.

%-------------------------------
\paragraph*{Example 3}
The matrix pair $\{A, L\}$ is constructed as follows. Set $m=n=p=1000$ and set $r=900$. Let $C_{A}=(\Sigma_{A}, \mathbf{0})$ with $C_A=\mathrm{diag}(\{c_i\}_{i=1}^{r})$, where $c(1)=0.99, c(2)=0.98$, $c(3:r-4)=\texttt{linspace(0.96,0.06,r-4)}$ and $c(r-1)=0.04, c(r-1)=0.02$, and let $S_{L}=(\Sigma_{L}, \mathbf{0})$ with $\Sigma_L=\mathrm{diag}(\{s_i\}_{i=1}^{r})$ and $s_i=(1-c_{i}^2)^{1/2}$. Then let  $W=\texttt{gallery(`orthog',n,2)}$ be an orthogonal matrix and $D=\texttt{diag(linspace(1,10,n))}$. Finally let $A=C_AW^{\top}D$ and $L=S_LW^{\top}D$. By the construction, we have $\mathrm{rank}((A^{\top}, L^{\top})^\top) = r<n$, and the nontrivial GSVD components are $c_i, \ s_i$ and the $i$-th columns of $I_n$, $I_n$ and $D^{-1}W$ for $1\leq i \leq r$. For each nontrivial $x_i$, we compute $\calP_{\calR(M)}x_i=MM^{\dag}x_i$ to get the corresponding right generalized singular vector belonging to $\calR(M)$. We use \textsf{gGKB\_GSVD} to compute $x_i$ that belongs to $\calR(M)$. 

\begin{figure}[!htbp]
	\centering
	\subfloat
	{\label{fig:4a}\includegraphics[width=0.48\textwidth]{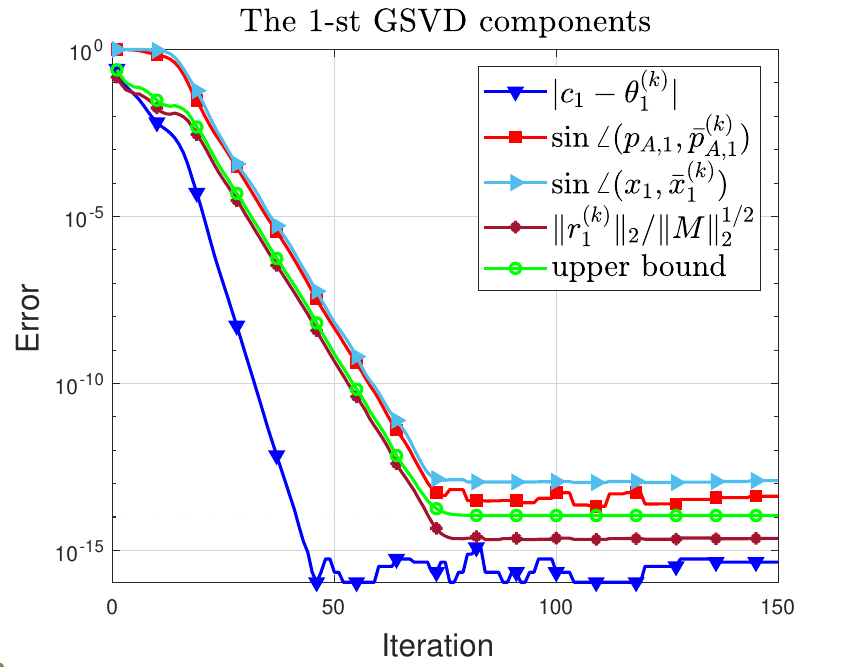}}\hspace{-0.0mm}
	\subfloat
	{\label{fig:4b}\includegraphics[width=0.50\textwidth]{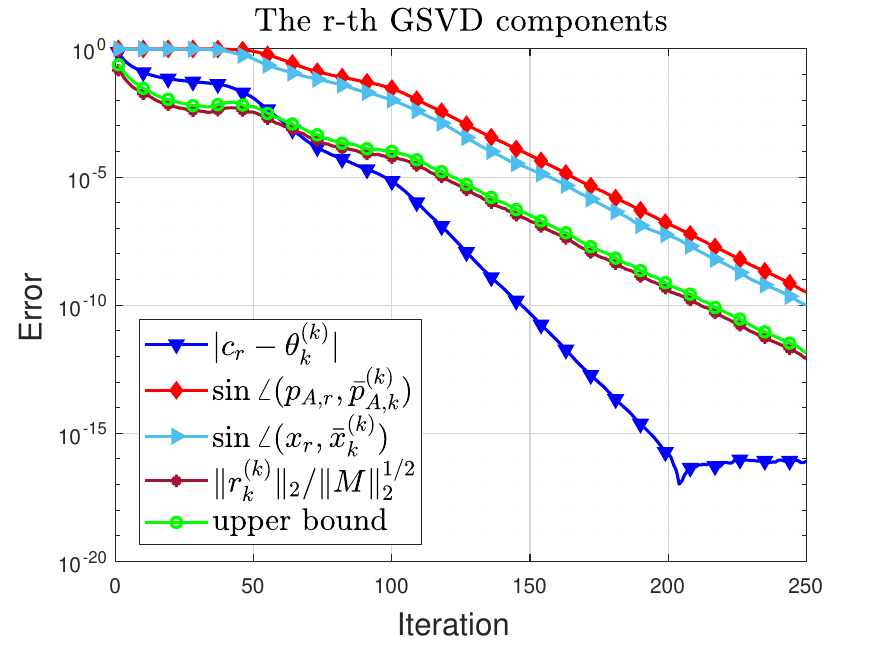}}
	\vspace{-2mm}
	\caption{Error curves of the approximate GSVD components by \textsf{gGKB\_GSVD} and relative residual norm with its upper bound, where $\mathrm{rank}((A^{\top}, L^{\top})^\top) = r<n$. Left: approximations to the 1-st GSVD components. Right: approximations to the r-th GSVD components.}
	\label{fig4}
\end{figure}

In this experiment, we test the performance of \textsf{gGKB\_GSVD} for computing the first and $r$-th GSVD components for a nonregular matrix pair. We directly compute $M^{\dag}$ and use full reorthogonalization for \textsf{gGKB}. \Cref{fig4} shows very good performance of the algorithm: (1) the two extreme GSVD components $(c_i, p_{A,i}, x_{i})$ for $i=1,r$ can be approximated with final accuracy around $\mathcal{O}(\mathbf{u})$; (2) the relative residual norm and its upper bound $\alpha_{k+1}\beta_{k+1}|e_{k}^{\top}h_{i}^{(k)}|$ follow nearly identical decreasing curves, with both eventually stabilizing at a level around $\mathcal{O}(\mathbf{u})$. Again, we observe that the convergence to the first GSVD components is faster than the convergence to the $r$-th.

\begin{figure}[!htbp]
	\centering
	\subfloat
	{\label{fig:5a}\includegraphics[width=0.48\textwidth]{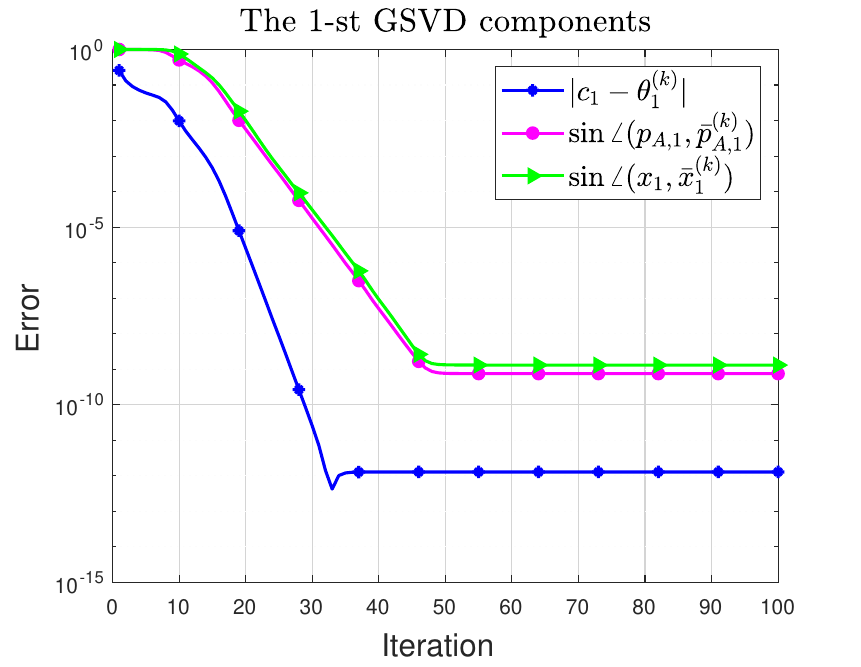}}\hspace{-0.0mm}
	\subfloat
	{\label{fig:5b}\includegraphics[width=0.50\textwidth]{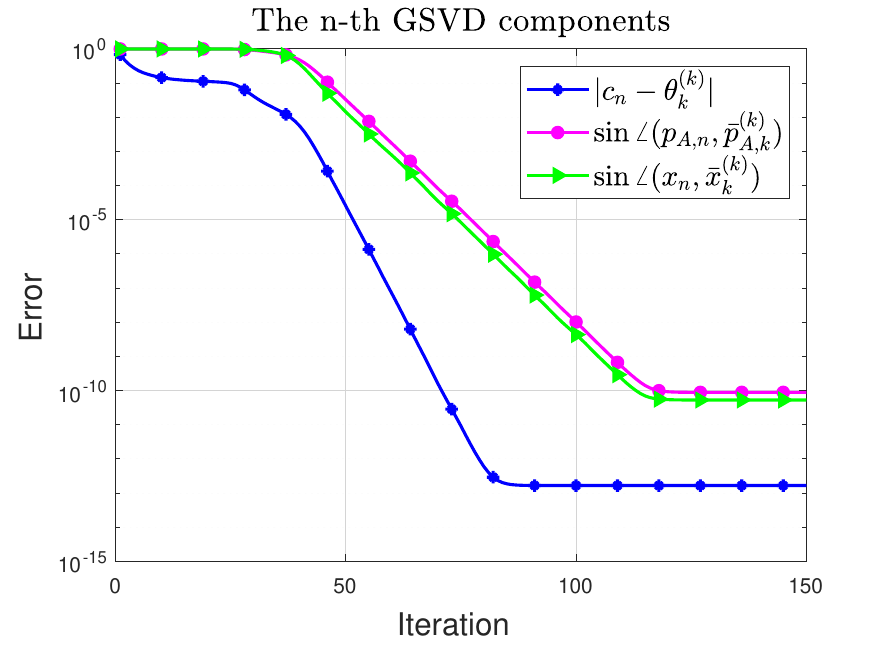}}
	\vspace{-2mm}
	\subfloat 
	{\label{fig:5c}\includegraphics[width=0.48\textwidth]{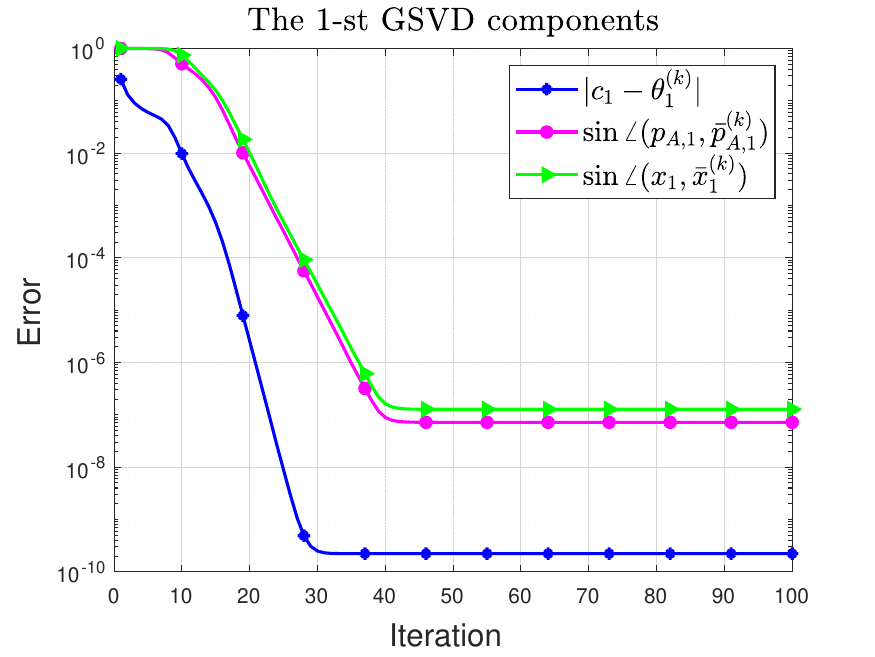}}\hspace{-0.0mm}
	\subfloat
	{\label{fig:5d}\includegraphics[width=0.50\textwidth]{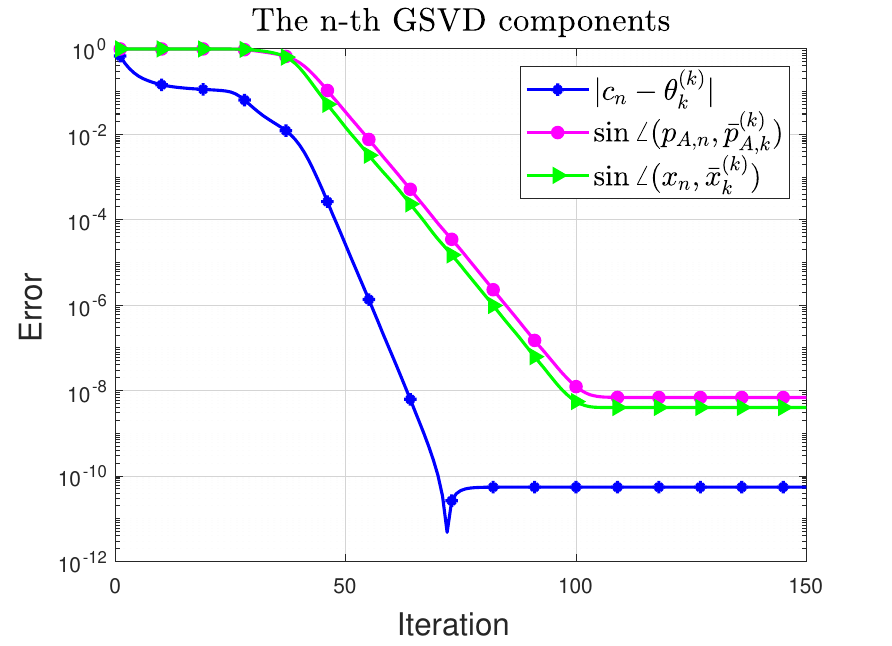}}
	\vspace{-2mm}
	\caption{Accuracy of computed GSVD components by \textsf{gGKB\_GSVD}, where $s=M^{\dag}\bar{s}$ at each \textsf{gGKB} iteration is computed by solving $\min_{s}\|Ms-\bar{s}\|_2$ using \texttt{lsqr.m} with different stopping tolerance \texttt{tol}. Top: \texttt{tol}=$10^{-10}$. Top: \texttt{tol}=$10^{-8}$.}
	\label{fig5}
\end{figure}

%-----------------------------
\paragraph*{Example 4}
The matrix pair $\{A, L\}$ is constructed as follows. Set $m=n=p=5000$. Let $C_A=\mathrm{diag}(\{c_i\}_{i=1}^{r})$ with $c(1)=0.99, c(2)=0.97$, $c(3:n-4)=\texttt{linspace(0.95,0.15,n-4)}$ and $c(n-1)=0.1, c(n)=0.05$. Let $S_{L}=\mathrm{diag}(\{s_i\}_{i=1}^{n})$ with $s_i=(1-c_{i}^2)^{1/2}$. Then let $W=\texttt{gallery(`orthog',n,2)}$ be an orthogonal matrix and $D=\texttt{diag(linspace(1,10,n))}$. Finally let $A=C_AW^{\top}D$ and $L=S_LW^{\top}D$. By the construction, $\{A,L\}$ is a regular matrix pair, and the $i$-th GSVD components are $c_i, \ s_i$ and $i$-th columns of $I_n$, $I_n$ and $D^{-1}W$.

We use this experiment to demonstrate the impact of inaccuracy in the computation of $M^{\dag}\bar{s}$ on the final accurate of the approximate GSVD components. We use the Matlab build-in function \texttt{lsqr.m} to solve \eqref{ls_gGKB} iteratively with stopping tolerance $\texttt{tol}=10^{-10}, 10^{-8}$ at each iteration of \textsf{gGKB}, respectively. \Cref{fig5} shows the decrease of relative errors of the first and $n$-th approximate GSVD components with the two stopping tolerances. We observe that the computational accuracy of $M^{\dag}\bar{s}$ significantly affects the final accuracy of both the generalized singular values and vectors. As the computational accuracy deteriorates, so does the final accuracy of the computed GSVD components. Further theoretical investigation into this issue should be conducted in future research.

%%-------------------------------------------------------------
\section{Conclusion and outlook}\label{sec7}
Based on the theory of singular value expansion (SVE) of linear compact operators, we have provided a new understanding of the GSVD of $\{A, L\}$ with $A\in\mathbb{R}^{m\times n}$ and $L\in\mathbb{R}^{p\times n}$. By defining the positive semidefinite matrix $M=A^{\top}A+L^{\top}L$, we have shown that: (1) the trivial GSVD components $\{x_i\}$ form a basis for $\calN(M)$ and any nontrivial $x_i$ belongs to the coset $\bar{x}_i+\calN(M)$, where $\bar{x_i}\in\calR(M)$ is a nontrivial GSVD component; (2) the nontrivial GSVD components of $A$ and $L$ are just the SVEs of the linear operators $\calA: (\calR(M),\langle\cdot,\cdot\rangle_{M})\rightarrow(\mathbb{R}^{m},\langle\cdot,\cdot\rangle_{2}), \ v \mapsto Av$ and $\calL: (\calR(M),\langle\cdot,\cdot\rangle_{M})\rightarrow(\mathbb{R}^{p},\langle\cdot,\cdot\rangle_{2}), \ v \mapsto Lv$, respectively. As a direct application of this result, we have developed an operator-type Golub-Kahan bidiagonalization (GKB) for $\calA$ and $\calL$, leading to a novel generalized GKB (\textsf{gGKB}) process. We have used the GSVD of $\{A, L\}$ to study basic properties of \textsf{gGKB} and proposed the \textsf{gGKB\_GSVD} algorithm to compute several nontrivial extreme GSVD components of large-scale matrix pairs. Preliminary results about convergence and accuracy of \textsf{gGKB\_GSVD} for GSVD computation have been provided, and numerical experiments are presented to demonstrate the effectiveness of this method.

The idea of this paper offers potential directions for developing new algorithms for large-scale GSVD computation. Note that the SVE of $\calA$ or $\calL$ can be treated as a ``weighted'' SVD, where the weight matrix $M$ induces a non-Euclidean inner product. Therefore, existing SVD algorithms based on Krylov subspace projection may be modified to approximate the SVE and consequently, the nontrivial GSVD components.

%%--------------------------------------------------------
% \section*{Acknowledgments}
%The authors are grateful to two anonymous referees who provided invaluable comments about the initial versions of this manuscript. 

%%---------------------------------------------------
\bibliographystyle{abbrv}
\bibliography{references}

\end{document}